\tikzstyle{block} = [rectangle, draw,
\tikzstyle{block2} = [rectangle, draw,
\tikzstyle{block3} = [rectangle,
\tikzstyle{line} = [draw, -implies, double distance=3pt]
\tikzstyle{dashedline} = [draw, dashed]
\tikzstyle{doubleimplies} = [draw, implies-implies, double distance=3pt]
\tikzstyle{arrow} = [draw, -{Latex[length=3mm]}]
\tikzstyle{doublearrow} = [draw, {Latex[length=3mm]}-{Latex[length=3mm]}]
\tikzstyle{noblock} = [rectangle, text width=5em, text centered,
\tikzstyle{widenoblock} = [rectangle, text width=9em, text centered,
\tikzstyle{wideblock} = [rectangle, draw,
\tikzstyle{widesmallblock} = [rectangle, draw,
\newcommand{\Pval}{\mathfrak p}
\newcommand{\Eval}{\mathfrak e}
\newcommand{\RR}{\mathbb{R}}
\newcommand{\NN}{\mathbb{N}}
\newcommand{\EE}{\mathtt{E}}
\newcommand{\Pbb}{\mathcal{P}}
\newcommand{\Qcalu}{\bar{\mathcal{Q}}}
\newcommand{\Qcall}{\underline{\mathcal{Q}}}
\newcommand{\Ptt}{\mathtt{P}}
\newcommand{\Qtt}{\mathtt{Q}}
\newcommand{\Rtt}{\mathtt{R}}
\newcommand{\Utt}{\mathtt{U}}
\newcommand{\Htt}{\mathtt{H}}
\newcommand{\CI}{\mathfrak C}
\newcommand{\1}{\mathbf{1}}
\newcommand{\dd}{\mathrm{d}}
\newcommand{\Ncal}{\mathcal{N}}
\newcommand{\Fcal}{\mathcal{F}}
\newcommand{\Acal}{\mathcal{A}}
\newcommand{\Pcal}{\mathcal{P}}
\newcommand{\Hcal}{\mathcal{H}}
\newcommand{\Scal}{\mathcal{S}}
\newcommand{\Qcal}{\mathcal{Q}}
\newcommand{\Zcal}{\mathcal{Z}}
\newcommand{\Gcal}{\mathcal{G}}
\DeclareMathOperator*{\esssup}{ess\,sup}
\DeclareMathOperator*{\essinf}{ess\,inf}
\newtheorem{theorem}{Theorem}
\newtheorem{lemma}[theorem]{Lemma}
\newtheorem{proposition}[theorem]{Proposition}
\newtheorem{example}[theorem]{Example}
\newtheorem{remark}[theorem]{Remark}
\newtheorem{corollary}[theorem]{Corollary}
\newcommand{\com}[1]{\marginpar{{\begin{minipage}{0.15\textwidth}{\setstretch{1.1} \begin{flushleft} \footnotesize \color{red}{#1} \end{flushleft} }\end{minipage}}}}
\title{
Admissible anytime-valid sequential inference\\
must rely on nonnegative martingales\\
\bigskip
}
\author{
Aaditya Ramdas$^{1}$, Johannes Ruf$^{2}$, Martin Larsson$^{3}$, Wouter M. Koolen$^{4}$\\
\and 
$^{1}$ Departments of Statistics and Machine Learning, Carnegie Mellon University \\
$^{2}$ Department of Mathematics, London School of Economics\\
$^{3}$ Department of Mathematics, Carnegie Mellon University \\
$^{4}$ Machine Learning Group, CWI Amsterdam \\
\and
\texttt{aramdas@cmu.edu, j.ruf@lse.ac.uk}\\ \texttt{martinl@andrew.cmu.edu, wmkoolen@cwi.nl} 
}
\begin{document}

\maketitle

\begin{abstract}
Confidence sequences, anytime p-values (called p-processes in this paper), and e-processes all enable sequential inference for composite and nonparametric classes of distributions at arbitrary stopping times. Examining the literature, one finds that at the heart of all these (quite different) approaches has been the identification of nonnegative (super)martingales. Thus, informally, nonnegative (super)martingales are known to be sufficient for \emph{anytime-valid} sequential inference, even in composite and nonparametric settings. Our central contribution is to show that nonnegative martingales are also universal---after appropriately defining \emph{admissibility}, we show that all admissible constructions of confidence sequences, p-processes, or e-processes must necessarily utilize nonnegative martingales. 
Our proofs utilize several modern mathematical tools for composite testing and estimation problems: max-martingales, Snell envelopes, transfinite induction, and new Doob-L\'evy martingales make appearances in previously unencountered ways. Informally, if one wishes to perform anytime-valid sequential inference, then any existing approach can be recovered or dominated using nonnegative martingales. We provide several nontrivial examples, with special focus on testing symmetry, where our new constructions render past methods inadmissible. We also prove the subGaussian supermartingale to be admissible.
\end{abstract}

\paragraph{Keywords:} Admissibility; composite nonnegative supermartingale; p-process; confidence sequence; Doob-L\'evy martingale; e-process; max-martingale; optional stopping; Snell envelope;  Ville's inequality.

 \tableofcontents

\newpage

\hypersetup{linkcolor=blue}

\section{Introduction}
\label{sec:introduction}

Consider a laboratory that wishes to understand if a particular intervention (`treatment') has any positive effect on a prespecified outcome of interest. Without getting bogged down by details, suppose the `average treatment effect' of the intervention over the relevant population is denoted by $\theta$. Suppose the lab wants to test $H_0: \theta \leq 0$ against $H_1: \theta > 0$, or to estimate $\theta$ using a confidence interval.  The lab believes that there is an effect, but has no idea how many subjects to collect data from: a larger sample size means more power, but also more time and money. So they conduct their experiment sequentially: subjects enter the study one at a time and are assigned to treatment or control completely at random; denote the data from subject $t$ as $X_t$. 

After observing $X_t$, they analyze the data $X_1,\dots,X_t$ they have so far, and then decide if they wish to collect more data, or whether what they already have suffices to demonstrate an effect (to themselves, or to a journal, or to the world). The lab stops their experiment at some data-dependent stopping time $\tau$---maybe time ran out, or they used the money up faster than expected; maybe the effect was sufficiently large, or perhaps they lowered their aims by being satisfied with a smaller effect, or became more optimistic and kept the experiment running longer in the hope for a narrower confidence interval around a (hopefully) large effect. In other words, the stopping criterion used may itself have changed over time with funding coming in or drying up, with initial results being more/less promising than anticipated. 
In any case, the experiment was stopped at time $\tau$ and not earlier or later, and there could be multiple data-dependent reasons for stopping at $\tau$ that were hard to anticipate in advance.

As a measure of evidence, they may hope to construct a ``p-process'', which is a sequence of p-values $(\Pval_t)_{t\in\NN}$ that satisfies
\begin{equation}\label{eq:stopped-pvalue}
\text{for any arbitrary stopping time $\tau$, } \Pr_{H_0} (\Pval_\tau \leq a) \leq a, \text{ for all } a \in [0,1].
\end{equation}
\citet{johari_always_2015} are the major modern proponents of this idea, and call this an ``anytime-valid p-value'', but for consistency with e-processes introduced later, we call it a p-process.
Unfortunately, naively using a $t$-test, a chi-squared test, or permutations, does not yield the above property. Indeed, `standard' non-sequential p-values only satisfy the weaker property
\[
\text{for any data-independent time $t$, } \Pr_{H_0} (\Pval_t \leq a) \leq a, \text{ for all } a \in [0,1].
\]

Instead of testing, if the lab wished to estimate $\theta$ with an error tolerance $\alpha \in [0,1]$, they may hope to construct a ``confidence sequence'', which is sequence of confidence sets $(\CI_t(\alpha))_{t\in\NN}$ that satisfies
\begin{equation}\label{eq:stopped-CI}
\text{\quad for any arbitrary stopping time $\tau$, } \Pr (\theta \in \CI_\tau(\alpha)) > 1-\alpha.
\end{equation}
One could have used $\geq$ instead of $>$ above; the two definitions are often equivalent/interchangeable modulo minor adjustments like swapping open for closed sets.
Confidence sequences were proposed by Robbins and collaborators like Darling, Siegmund, and Lai~\cite{darling_confidence_1967,robbins_expected_1974,lai_confidence_1976}, and have been regaining interest in recent years~\cite{pace2019likelihood,howard_uniform_2019}. They were originally defined slightly differently, but as we point out later, the above is an equivalent definition that gels better with definitions of other objects in this paper.

Once more, unfortunately, a naive confidence interval based on the central limit theorem or the bootstrap does not satisfy the desired property. `Standard' confidence sets instead satisfy the above property only at fixed \emph{data-independent} times $t$. 
We provide an example of~\eqref{eq:stopped-CI} for the reader to have a concrete instance in mind. Suppose $(X_s)_{s\geq 1}$ are i.i.d.~$N(\theta,1)$, then it can be shown~\cite{howard_uniform_2019} that
\begin{equation}\label{eq:normal-mixture-CS}
\text{for any stopping time $\tau$, } \Pr\left( \theta \in \left[ \frac{\sum_{s \leq \tau} X_s}{\tau} \pm \sqrt{\frac{(1+1/\tau) \log ((\tau+1)/\alpha^2)}{\tau}}\right]\right) > 1-\alpha.
\end{equation}
At a fixed time $t$, with  $z_{q}$ denoting the $q$-quantile of a standard Gaussian, we could have used a confidence interval width of ${z_{1-\alpha/2}}/{\sqrt{t}} \leq \sqrt{\log(2/(\pi\alpha^2))/t}$ by the Gaussian tail inequality, which is reasonably tight for small $\alpha$. 
Thus the above confidence sequence is looser by a factor of $\approx \sqrt{\log \tau}$. The above inequality is proved by applying Ville's inequality (see \eqref{eq:Ville}) to an exponential Gaussian-mixture martingale. These are fundamental tools we encounter later in this paper so we do not elaborate on them further here. 


Recently, the notion of an ``e-process'' has gained particular prominence as a central object in a so-called game-theoretic approach to statistical testing. 
An e-process for $H_0$ is a nonnegative sequence $(\Eval_t)_{t\in\NN}$ that satisfies
\begin{equation}\label{eq:stopped-safe}
\text{\quad for any arbitrary stopping time $\tau$, }  \EE_{H_0}[\Eval_\tau] \leq 1.
\end{equation}
Special cases, that we will encounter in this paper, include test martingales~\cite{shafer_test_2011,vovk_testing_randomness_2019} and test supermartingales~\cite{shafer2019language,grunwald_safe_2019,howard_exponential_2018}, but it has recently been established that e-processes are more general than these two objects~\cite{ramdas2021testing}. For example, universal inference~\cite[Section 8]{wasserman2020universal} yields an e-process that is not a test (super)martingale. As before, a standard e-value~\cite{vovk2019values} is not anytime-valid and exhibits the above property only at fixed times $t$, which does not suffice for sequential settings. 

Of course, p-processes, e-processes and confidence sequences are more general than their nonsequential counterparts (p-values, e-values, confidence intervals): whenever you have the former, you in particular also have the latter, since fixed times are also stopping times but not vice versa.
All three tools can be used to make decisions of accepting or rejecting a null hypothesis $H_0$. A level-$\alpha$ sequential test is a decision rule that maps the data (and $\alpha$) onto $\{0,1\}$, stopping when it first outputs one (rejection of the null). Concretely, a level-$\alpha$ sequential test is a binary sequence $(\psi_t)_{t\in\NN}$ that satisfies
\begin{equation}\label{eq:stopped-test}
\text{\quad for any arbitrary stopping time $\tau$, }  \Pr_{H_0}(\psi_\tau = 1) \leq \alpha.
\end{equation}
 In contrast to Wald's sequential tests (which specified a single stopping rule for both type-I and type-II error control), the above notion of a sequential test is more closely related to Robbins' ``level-$\alpha$ tests of power one''. Once more, `standard' hypothesis tests only satisfy such a type-I error guarantee at fixed times $t$. As we shall see later in more detail, p-processes, e-processes, or confidence sequences can each be used to derive a level-$\alpha$ sequential test.

\bigskip
Our main interest in this work is to develop a general admissibility theory for estimating nonparametric functionals and testing composite nonparametric null hypotheses. 
We formally define all four of the above concepts in full generality in Section~\ref{sec:the-4-tools}, but the above semi-formal description suffices for the moment to describe our main findings below. 

One common theme amongst all the aforementioned works over the decades is the repeated appearance of various, often sophisticated, \emph{nonnegative supermartingales} as the central object that enables the construction of all four aforementioned tools of anytime-valid inference. In the rest of this paper, we further examine this central role of nonnegative martingales in constructing p-processes, confidence sequences, e-processes and sequential tests with the desired robustness to continuous monitoring, and arbitrary optional stopping (and continuation) of experiments. Specifically, we show that \emph{all admissible constructions of these objects must employ nonnegative martingales (explicitly or implicitly).}

\bigskip

\textbf{Admissibility.}
Naturally, the desire for methods satisfying properties like \eqref{eq:stopped-pvalue}, \eqref{eq:stopped-CI},  \eqref{eq:stopped-safe}, or \eqref{eq:stopped-test} comes with an implicit wish for efficiency. In other words, setting $\Pval_\tau=1$,  $\Eval_\tau=1$, $\psi_\tau = 0$, or $\CI_\tau = \RR$ (or $\CI_\tau=\Theta$ for a more general parameter space) trivially satisfies the above requirements, but is clearly uninformative. We want $\Pval_\tau$ to be as small as possible, $\Eval_\tau$ and $\psi_\tau$ to be as large as possible, and $\CI_\tau$ to be as narrow as possible, while still being statistically valid measures of uncertainty. We use the term  `dominates' to compare pairs of these objects (in order to avoid using case-by-case adjectives like small/large/narrow)---so, if $\Pval' \leq \Pval$ then $\Pval'$ dominates $\Pval$. Similarly,   if $\Eval' \geq \Eval$ then $\Eval'$ dominates $\Eval$, if $\psi' \geq \psi$ then $\psi'$ dominates $\psi$, and if $\CI' \subseteq \CI$ then $\CI'$ dominates $\CI$. 
In this paper, we use the notion of admissibility to capture this idea: informally, a p-process (or e-process, test, confidence set) is inadmissible if it is strictly dominated by another valid p-process (or  e-process, test, confidence set). We define admissibility formally in Section~\ref{sec:the-4-tools}.

\bigskip

\textbf{Paper outline.} Sections~\ref{sec:WaldRobbins} and~\ref{sec:Ville} lay out the formal definitions of several of the basic tools---nonnegative (super)martingales, max-martingales, Doob's optional stopping theorem, and Ville's inequality.  Section~\ref{sec:the-4-tools} introduces the four central tools of anytime-valid sequential inference: p-processes, e-processes, sequential tests, and Robbins' confidence sequences. Section~\ref{sec:two-examples} provides two simple examples: Gaussian and symmetric (super)martingales. Then Section~\ref{sec:admissible-pointwise} and Section~\ref{sec:composite-via-pointwise} summarize this paper's central message about the centrality of nonnegative martingales in constructing the aforementioned tools. Section~\ref{sec:admissible-pointwise} formalizes the necessary and sufficient conditions for admissibility in the point null setting; it uses a Doob-L\'evy max-martingale construction to show the necessary conditions for p-processes, a Doob-Le\'vy martingale for sequential tests, and uses the Doob decomposition of an appropriate Snell envelope to  prove that admissible e-processes must also (explicitly or implicitly) employ nonnegative martingales. Section~\ref{sec:composite-via-pointwise}
develops several novel reductions of admissibility in the composite null setting to the point null case, and presents extensions to estimation (confidence sequences).
Section~\ref{sec:sufficient} presents deeper investigations on admissibility, including anti-concentration bounds and the role of randomization.
Section~\ref{sec:subG} utilizes the learnt lessons to show that Robbins' subGaussian supermartingale is admissible. 
Section~\ref{sec:examples} shows that past constructions for testing symmetry were inadmissible, and to produce improved admissible tests. 
Appendix~\ref{app:A} recaps certain technical concepts like local domination and essential suprema. Appendix~\ref{sec:proofs} details all proofs that are not in the main paper. Finally, Appendix~\ref{sec:more-examples} contains examples and counterexamples to support several claims made in the paper.

\section{Preliminaries}\label{sec:WaldRobbins}

\subsection{A time-uniform equivalence lemma}

The following lemma is quite central to the construction and interpretation of p-values, confidence sets, and tests that are valid at arbitrary stopping times.

\begin{lemma}[Equivalence lemma]\label{lem:equiv_uniform_defns}
  Let $(A_t)_{t \in \NN}$ be an adapted sequence of events in some filtered
  probability space and let $A_\infty := \limsup_{t \to \infty} A_t := \bigcap_{t \in \NN} \bigcup_{s \geq t} A_s$. The
  following statements are equivalent:
  \begin{enumerate}[label={\rm(\roman{*})}, ref={\rm(\roman{*})}] 
  \item\label{L1:1} $\Pr(\bigcup_{t \in \NN} A_t) \leq \alpha.$
  \item\label{L1:2} $\smash{\sup_T \Pr(A_T) \leq \alpha}$, where $T$ ranges over random times that are possibly infinite, and are not necessarily stopping times.
  \item\label{L1:3} $\sup_\tau \Pr(A_\tau) \leq \alpha$, where $\tau$ ranges over stopping times, possibly infinite.
  \end{enumerate}
\end{lemma}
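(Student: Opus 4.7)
The plan is to prove the cyclic chain $\ref{L1:1}\Rightarrow\ref{L1:2}\Rightarrow\ref{L1:3}\Rightarrow\ref{L1:1}$. Two of the three implications are essentially bookkeeping; the content lies in closing the loop via a canonical first-passage stopping time.

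For $\ref{L1:1}\Rightarrow\ref{L1:2}$, I would fix an arbitrary (possibly infinite-valued) random time $T$ and argue pointwise: on $\{T=t\}$ with $t\in\NN$ we have $A_T=A_t$, while on $\{T=\infty\}$ the event $A_T=A_\infty=\limsup_t A_t$ is, by definition, contained in $\bigcup_{t\in\NN} A_t$. Thus the event $\{\omega:\omega\in A_{T(\omega)}\}$ sits inside $\bigcup_{t\in\NN}A_t$, and taking probabilities gives $\Pr(A_T)\leq\Pr(\bigcup_t A_t)\leq\alpha$, uniformly in $T$. The implication $\ref{L1:2}\Rightarrow\ref{L1:3}$ is immediate because every stopping time is in particular a random time, so the supremum in $\ref{L1:3}$ is over a subset of the times considered in $\ref{L1:2}$.

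For the interesting direction $\ref{L1:3}\Rightarrow\ref{L1:1}$, I would introduce the first-hitting time
\[
\tau := \inf\{t\in\NN : \omega\in A_t\},\qquad \inf\emptyset:=\infty,
\]
which is a stopping time because $(A_t)$ is adapted (the event $\{\tau\leq t\}=\bigcup_{s\leq t}A_s$ lies in $\Fcal_t$). On $\{\tau<\infty\}$ the event $A_\tau$ occurs by construction, and on $\{\tau=\infty\}$ no $A_t$ occurs, so $A_\infty=\limsup_t A_t$ does not occur either. Consequently $\{A_\tau\}=\{\tau<\infty\}=\bigcup_{t\in\NN}A_t$, whence
\[
\Pr\Bigl(\bigcup_{t\in\NN}A_t\Bigr) \;=\; \Pr(A_\tau) \;\leq\; \sup_{\tau'}\Pr(A_{\tau'}) \;\leq\; \alpha,
\]
closing the loop.

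The only delicate point is not a deep obstacle but a conceptual one: the lemma must be read with the convention $A_\infty=\limsup A_t$, because otherwise $A_T$ is ambiguous on $\{T=\infty\}$ in $\ref{L1:2}$ and the identity $\{A_\tau\}=\bigcup_t A_t$ in the last step would fail. With that convention fixed, no measurability or integrability issues arise, and no appeal to optional stopping or any martingale machinery is needed—only the adaptedness used to verify that the first-hitting time is a stopping time.
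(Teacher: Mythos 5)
Your proof is correct and follows essentially the same route as the paper's: the decomposition of $A_T$ over the values of $T$ (using $A_\infty=\limsup_t A_t\subseteq\bigcup_t A_t$) for \ref{L1:1}$\Rightarrow$\ref{L1:2}, the trivial inclusion of stopping times among random times for \ref{L1:2}$\Rightarrow$\ref{L1:3}, and the first-hitting time $\tau=\inf\{t: A_t \text{ occurs}\}$ with $A_\tau=\bigcup_t A_t$ for \ref{L1:3}$\Rightarrow$\ref{L1:1}. No gaps.
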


The proof can be found in Appendix~\ref{sec:proofs}. If the event $A_t$ is associated with making an erroneous claim at time $t$, we interpret the aforementioned three statements as follows:
\begin{enumerate}[label={\rm(\roman{*})}, ref={\rm(\roman{*})}] 
    \item\label{L1:1'} The probability of ever making an erroneous claim, over infinite time, is at most $\alpha$. 
    \item\label{L1:2'} The probability of making an erroneous claim at an arbitrary data-dependent time $T$, perhaps chosen post-hoc as a past time after an experiment is stopped, is at most $\alpha$.
    \item\label{L1:3'} When we stop an experiment at an arbitrary stopping time $\tau$, the probability of making an erroneous claim at that time is at most $\alpha$.
\end{enumerate}
Intuitively, it is clear that \ref{L1:1'} implies \ref{L1:2'}, which in turn implies \ref{L1:3'}, but the aforementioned lemma establishes that all three properties are actually equivalent: if you want one of them, you get all of them for free. This lemma gives the first hint of the centrality of martingales---the third statement is very directly about optional stopping, even though this fact is somewhat masked in the first two ways of framing the desired error control. While \ref{L1:3'} enables inferences at stopping times as initially motivated, property \ref{L1:2'} allows further introspection at previous times, enabling statistically valid answers to questions like `what was the estimate of the treatment effect at time $\tau/2$?' (where $\tau$ was the stopping time).

The above lemma first appeared in \citet{howard_uniform_2019}. While Lemma~\ref{lem:equiv_uniform_defns} did not motivate its original definition in 1967, \citet{darling_confidence_1967}  first defined a `confidence sequence' for a parameter $\theta$ as an infinite sequence of confidence sets $(\CI_t)_{t \in\NN}$  such that
\begin{equation*}
  \Pr(\exists t \in \NN: \theta \notin \CI_t) \leq \alpha.
\end{equation*}
In other words the aforementioned confidence sets satisfy property \ref{L1:1'} for $A_t := \1_{\theta \notin \CI_t}$. 

There has been much recent effort to constructing p-processes for testing 
and confidence sequences for estimation. 
Underlying the construction of these objects in a variety of works, one often finds the repeated use of Ville's inequality for nonnegative supermartingales; see \citet{howard_exponential_2018} for a thorough survey. We will show that this is not a coincidence: we prove that nonnegative martingales underlie all admissible constructions for performing anytime-valid sequential inference.   

To make these claims more formal, and especially to handle composite hypothesis testing, we need to clarify what the probability $\Pr$ means in the above definitions, and we do so next.

\subsection{Notation and conventions}
\label{sec:conventions}
Let $\NN$ represent the natural numbers and $\NN_0 = \NN \cup \{0\}$. 
We use $(B_t)$ to denote a sequence $(B_t)_{t\in \NN}$ or $(B_t)_{t\in \NN_0}$ where the indexing of $t$ is either implicitly understood from the context or unimportant, but we use $B_t$ without the brackets to denote a particular element from the sequence. Thus, for example, $\Fcal_t$ will denote a sigma-field at time $t$ but $(\Fcal_t)$ denotes a filtration, which is an increasing sequence of sigma-fields. Unless otherwise mentioned, $\Fcal_0=\sigma(U)$, where $U$ is a $[0,1]$-uniform random variable that is independent of everything else, signifying an external source of randomness, and $\Fcal_t := \sigma(U, X_1,\dots,X_t)$ will denote the canonical filtration, where $X_t$ is the data observed at time $t$. We allow $X_t$ to take values in some general space, which we do not need to specify here, e.g., in $\mathbb R^d$, equipped with the Borel sigma algebra.

Earlier, we used $\Pr$ to represent the probability taken over all sources of randomness, but in what follows we will use a more explicit notation:
we denote the {distribution of an infinite sequence of observations} by $\Ptt$; this means that $\Ptt$ is a probability measure on $\Fcal_\infty:=\sigma(U,(X_t)_{t \in \NN}) = \sigma(\bigcup_{t \in \NN_0} \Fcal_t)$. Expectations with respect to $\Ptt$ are denoted $\EE_{\Ptt}$.
A set consisting of distributions over sequences will be denoted $\Pbb$; so $\Pbb=\{\Ptt\}$ is the singleton case, but more generally there may be uncountably many $\Ptt \in \Pbb$. In the case of testing, we denote the null set of distributions by $\Qcal \subset \Pcal$.

Next, $\tau$ will always denote a stopping time, while $t$  denotes a fixed time. 
A subscript $t$ for $\Pval_t$, $\Eval_t$,  $\psi_t$, and $\CI_t$ means that these objects were constructed using only the data available up to time $t$. In other words, $\Pval_t$, $\Eval_t$, $\psi_t$, and $\CI_t$ are $\Fcal_t$-measurable, or the sequences
$(\Pval_t)$, $(\Eval_t)$, $(\psi_t)$, and $(\CI_t)$ are adapted to $(\Fcal_t)$. It is also understood that  $\Pval_t$ has range $[0,1]$, $\Eval_t$ has range $[0,\infty]$ and a sequential test $\psi_t$ has range $\{0,1\}$; the range of the confidence set $\CI_t$ will be formally specified later.

If $\Ptt$ is a probability measure on $\Fcal_\infty$ and $\tau$ is a stopping time, we write $\Ptt_\tau$ for the restriction of $\Ptt$ to $\Fcal_\tau$. This is simply the probability measure on $\Fcal_\tau$ defined by $\Ptt_\tau(A)=\Ptt(A)$, for $A\in\Fcal_\tau$. (This is the `coarsening' of $\Ptt$ that only operates on events observable up to time $\tau$.)

We sometimes, but not always, assume that $\Pcal$ is `locally dominated' by (i.e., locally absolutely continuous with respect to) a fixed reference measure $\Rtt$; we review the meaning of this in Appendix~\ref{sec:ref_measure}. For example, if each observation $X_s$ has a Lebesgue density under all $\Ptt \in \Pcal$, one can choose the reference measure to be the distribution of an i.i.d.\ sequence of standard Gaussians. Existence of a reference measure is needed to unambiguously interpret conditional expectations like $\EE_{\Ptt}[Y\mid\Fcal_t]$ under measures different from $\Ptt$, since \emph{a priori}, such conditional expectations are only defined up to $\Ptt$-nullsets. For completeness, we elaborate on this in Appendix~\ref{sec:ref_measure}, but this issue will not actually be visible in the proofs of our results.

{\color{black}
We sometimes adopt the following shorthand:
\[\Qcalu(A) = \sup_{\Qtt\in\Qcal} \Qtt(A), \quad  \Qcall(A) = \inf_{\Qtt\in\Qcal} \Qtt(A),
\]
which are supposed to remind the reader of `upper' and `lower' probabilities.
A set $A$ is called $\Qtt$-polar if $\Qtt(A)=0$, meaning it is a measure zero set under $\Qtt$, and it is called $\Qcal$-polar if $\Qcalu(A)=0$, meaning it is measure zero under every $\Qtt\in\Qcal$.
}


\subsection{A composite time-uniform equivalence lemma}

One of this paper's central contributions is to characterize admissibility in composite settings.
With that motivation, we present the following extension of Lemma~\ref{lem:equiv_uniform_defns} using the notation introduced above.
\begin{lemma}[Composite equivalence lemma]\label{lem:composite_equiv_uniform_defns}
Let $\Qcal$ be a family of probability measures. 
  Let $(A_t)_{t \in \NN}$ be an adapted sequence of events in some filtered
  probability space and let $A_\infty := \limsup_{t \to \infty} A_t := \bigcap_{t \in \NN} \bigcup_{s \geq t} A_s$. The
  following statements are equivalent:
    \begin{enumerate}[label={\rm(\roman{*})}, ref={\rm(\roman{*})}] 
  \item\label{L2:1} $\sup_{\Qtt\in\Qcal} \Qtt(\bigcup_{t \in \NN} A_t) \leq \alpha.$
  \item\label{L2:2} $\smash{\sup_T \sup_{\Qtt\in\Qcal}\Qtt(A_T) \leq \alpha}$, where $T$ ranges over all random times, possibly infinite.
  \item\label{L2:3} $\sup_\tau \sup_{\Qtt\in\Qcal} \ \Qtt(A_\tau) \leq \alpha$, where $\tau$ ranges over all stopping times, possibly infinite.
  \end{enumerate}
  Further, if equality holds for any one, then it holds for the other two. 
\end{lemma}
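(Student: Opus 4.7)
The plan is to reduce everything to the pointwise Lemma~\ref{lem:equiv_uniform_defns} by applying it separately for each individual $\Qtt \in \Qcal$, and then taking a supremum over $\Qtt$. The key observation is that the sets over which $T$ and $\tau$ range (all random times, resp.\ all stopping times on the given filtered probability space) do not depend on $\Qtt$, so the pair of suprema commutes:
\[
\sup_{T} \sup_{\Qtt\in\Qcal} \Qtt(A_T) \;=\; \sup_{\Qtt\in\Qcal} \sup_{T} \Qtt(A_T),
\qquad
\sup_{\tau} \sup_{\Qtt\in\Qcal} \Qtt(A_\tau) \;=\; \sup_{\Qtt\in\Qcal} \sup_{\tau} \Qtt(A_\tau).
\]

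Next, I would note that Lemma~\ref{lem:equiv_uniform_defns} actually yields an identity, not merely equivalence at a fixed threshold. Indeed, applying it with the single measure $\Qtt$ shows that the three inequalities $\Qtt(\bigcup_t A_t)\leq \alpha$, $\sup_T \Qtt(A_T)\leq \alpha$, and $\sup_\tau \Qtt(A_\tau)\leq \alpha$ are equivalent for every $\alpha\in[0,1]$, which forces
\[
\Qtt\Bigl(\bigcup_{t \in \NN} A_t\Bigr) \;=\; \sup_{T} \Qtt(A_T) \;=\; \sup_{\tau}\Qtt(A_\tau).
\]
Taking $\sup_{\Qtt\in\Qcal}$ on each side and combining with the sup-interchange above immediately yields
\[
\sup_{\Qtt\in\Qcal} \Qtt\Bigl(\bigcup_{t \in \NN} A_t\Bigr) \;=\; \sup_{T}\sup_{\Qtt\in\Qcal} \Qtt(A_T) \;=\; \sup_{\tau}\sup_{\Qtt\in\Qcal} \Qtt(A_\tau),
\]
which simultaneously proves the equivalence of \ref{L2:1}, \ref{L2:2}, \ref{L2:3} at level $\alpha$ and the additional claim that equality in one implies equality in the other two.

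There is essentially no main obstacle here beyond bookkeeping. The only mildly subtle point worth spelling out is that the pointwise lemma a priori only tells us three inequalities at the same $\alpha$ are equivalent; promoting this to an equality of the three suprema is what lets us freely take a further outer supremum over $\Qcal$ without the arguments getting tangled. Everything else (the trivial directions \ref{L2:1}$\Rightarrow$\ref{L2:2}$\Rightarrow$\ref{L2:3}, obtained by observing that deterministic times are a special case of random times, and stopping times a special case of random times) is contained in the pointwise lemma already, so the composite lemma really is a direct corollary once the sup interchange is acknowledged.
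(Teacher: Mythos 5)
Your proof is correct and matches the paper's approach: the paper simply states that the proof is identical to that of Lemma~\ref{lem:equiv_uniform_defns}, and your argument makes that reduction explicit by noting that the pointwise lemma yields the identity $\Qtt(\bigcup_t A_t)=\sup_T\Qtt(A_T)=\sup_\tau\Qtt(A_\tau)$ for each $\Qtt$, after which taking $\sup_{\Qtt\in\Qcal}$ and interchanging suprema gives both the equivalence and the equality claim.
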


The proof is exactly the same as in Lemma~\ref{lem:equiv_uniform_defns} and is thus omitted.
For contrast, we now state a version with expectations instead of probabilities in which the corresponding statements are not equivalent. Such a non-equivalence points to forthcoming differences between p-processes and e-processes. The following result resembles Lemma~\ref{lem:equiv_uniform_defns} but a composite version resembling Lemma~\ref{lem:composite_equiv_uniform_defns} can also be  easily stated.

\begin{lemma}[Non-equivalence lemma]\label{lem:non-equiv}
  Let $(N_t)_{t\in \NN_0}$ be an adapted sequence of nonnegative integrable random variables in a filtered
  probability space; let \smash{$N_\infty := \limsup_{t \to \infty}N_t$}. 
  Consider the following statements:
  \begin{enumerate}[label={\rm(\roman{*})}, ref={\rm(\roman{*})}] 
  \item\label{L12:1} $\EE[\sup_{t \in \NN_0} N_t] \leq 1$.
  \item\label{L12:2} $\EE[N_T] \leq 1$ for all random times $T$, possibly infinite and
    not necessarily stopping times.
  \item\label{L12:3} $\EE[N_\tau] \leq 1$ for all stopping times $\tau$, possibly infinite.
  \item\label{L12:4} $\EE[g(1) \vee \sup_{t \in \NN_0}  g(N_t)] \leq 1$ for any nondecreasing function $g$ such that $\smash{\int_1^\infty {g(y)}/{y^2}  \textnormal{d} y = 1}$; in particular, $\smash{\EE[1 \vee \sup_{t \in \NN_0} \sqrt{N_t}] \leq 2}$. 
  \end{enumerate}
 Then \ref{L12:1} and \ref{L12:2} are equivalent.
  Both \ref{L12:1} and \ref{L12:2} imply \ref{L12:3}, which in turn implies \ref{L12:4}. 
\end{lemma}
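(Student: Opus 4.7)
The plan is to establish the chain (i)$\,\Leftrightarrow\,$(ii)$\,\Rightarrow\,$(iii)$\,\Rightarrow\,$(iv); only the last implication requires real work. For (i)$\,\Rightarrow\,$(ii), simply note $N_T \le \sup_{t \in \NN_0} N_t$ for any random time $T$ (including $T=\infty$, since $N_\infty=\limsup_t N_t \le \sup_t N_t$). For the converse, I would exploit the crucial feature that (ii) permits non-stopping times: pick the bounded random time $T_n := \min\{s \le n : N_s = \max_{0 \le t \le n} N_t\}$, which is $\Fcal_n$-measurable but peeks at future values of $N$ relative to earlier indices, so it is generally \emph{not} a stopping time. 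Plugging into (ii) gives $\EE[\max_{0 \le t \le n} N_t] \le 1$, and monotone convergence in $n$ yields (i). This also pinpoints why (iii) alone is too weak to recover (i). The implication (i)$\,\Rightarrow\,$(iii) is then immediate.

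The substantive step is (iii)$\,\Rightarrow\,$(iv), which I would break into two moves. First, derive Ville's inequality from (iii) via the hitting time $\tau_y := \inf\{t \in \NN_0 : N_t \ge y\}$: since $N_{\tau_y} \ge y$ on $\{\tau_y<\infty\}$ and $N \ge 0$ elsewhere, $\EE[N_{\tau_y}]\le 1$ yields $\Pr(\sup_t N_t \ge y) \le 1/y$ for every $y \ge 1$. Setting $M := 1 \vee \sup_t N_t$, monotonicity of $g$ gives $g(1) \vee \sup_t g(N_t) \le g(M)$, and the layer-cake identity $g(M)-g(1)=\int_{(1,M]}\dd g(y)$ combined with Tonelli (the Stieltjes measure $\dd g$ being positive) yields
\[
\EE[g(M)] \;\le\; g(1) + \int_1^\infty \frac{\dd g(y)}{y}.
\]
Stieltjes integration by parts rewrites the right-hand integral as $-g(1) + \int_1^\infty g(y)/y^2\,\dd y + \lim_{y\to\infty} g(y)/y$, so once the boundary term is shown to vanish the total collapses to $\int_1^\infty g(y)/y^2\,\dd y = 1$, which is exactly (iv).

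The main technical obstacle I anticipate is justifying $\lim_{y\to\infty} g(y)/y = 0$ from only monotonicity and finiteness of $\int_1^\infty g(y)/y^2\,\dd y$. I would argue by contradiction: if $g(y_n)/y_n \ge c>0$ along some $y_n \uparrow \infty$, extract a subsequence with $y_{n+1} \ge 2y_n$; monotonicity forces $g(y)\ge g(y_n)\ge cy_n/2$ on $[y_n,2y_n]$, so each dyadic block contributes at least $c/4$ to $\int g/y^2$, contradicting its finiteness. Everything else is routine; in particular, specializing the final bound to $g(y)=\sqrt{y}/2$ (for which $\int_1^\infty y^{-3/2}/2\,\dd y = 1$) immediately yields the promised $\EE[1 \vee \sup_t \sqrt{N_t}] \le 2$.
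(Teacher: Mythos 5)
Your proposal is correct, and the easy parts (the equivalence of \ref{L12:1} and \ref{L12:2}, and \ref{L12:2}$\Rightarrow$\ref{L12:3}) match the paper's argument in substance; your finite-horizon argmax time $T_n$ plus monotone convergence is a clean variant of the paper's $\varepsilon$-approximate argmax. Where you genuinely diverge is \ref{L12:3}$\Rightarrow$\ref{L12:4}. The paper routes this through its $p$-value machinery: it observes that $\Pval_t := 1 \wedge \inf_{s\le t} 1/N_s$ is anytime-valid, so $\Pval_\infty$ stochastically dominates a uniform, and then invokes the calibration fact $\EE[f(\Pval_\infty)]\le\int_0^1 f(u)\,\dd u=1$ for the nonincreasing $f(u):=g(1/u)$ (citing Shafer et al.\ and Kirichenko--Gr\"unwald), noting that the substitution $y=1/u$ turns $\int_0^1 g(1/u)\,\dd u$ into $\int_1^\infty g(y)/y^2\,\dd y$. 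Your proof is a self-contained unpacking of exactly that calibration step: Ville's inequality $\Pr(M\ge y)\le 1/y$ from the hitting time, then layer cake plus Stieltjes integration by parts, with the boundary term $g(y)/y\to 0$ handled by your dyadic-block contradiction (which is correct; the constant $c/4$ versus $c/2$ is immaterial). The trade-off: the paper's route is shorter because the integration is hidden inside a cited lemma, while yours is elementary and makes visible where the condition $\int_1^\infty g(y)/y^2\,\dd y=1$ is actually used. Two small polish points for your version: take $g$ right-continuous without loss of generality (replacing $g$ by its right-continuous modification only increases $g(1)\vee\sup_t g(N_t)$ pointwise off a countable set and leaves $\int_1^\infty g(y)/y^2\,\dd y$ unchanged) so that the identity $g(M)-g(1)=\int_{(1,M]}\dd g$ is legitimate, and note that Ville first gives $\Pr(\sup_t N_t> y)\le 1/y$, with $\Pr(\sup_t N_t\ge y)\le 1/y$ following by letting $y'\uparrow y$.
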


The proof can be found in Appendix~\ref{sec:proofs}. Contrasting the above lemma with Lemma~\ref{lem:equiv_uniform_defns} brings out some of the differences between p-processes and e-processes. To dig deeper at the difference, note that one could have equivalently written Lemma~\ref{lem:equiv_uniform_defns} in terms of Bernoulli random variables $B_t := \1_{A_t}$, in which case the formulae above involving $\Qtt(\dots)$ would be replaced by \ref{L1:1} $\EE_\Qtt[\sup_{t \in \NN_0} B_t]$, \ref{L1:2} $\EE_\Qtt [B_T]$, and \ref{L1:3} $\EE_{\Qtt} [B_\tau]$, respectively. Thus, for these specific nonnegative binary random variables $(B_t)$, the relevant statements are all equivalent, but more generally they are not. This difference later manifests itself in the inability to take running suprema for e-processes, and overall a rather different underlying structure.

\section{Martingales, max-martingales,
  and Ville's inequality}\label{sec:Ville}

A martingale is a stochastic process adapted to an underlying filtration, whose value at any time is the conditional expectation of its value at any later time. This is however not the only possible notion of martingale; another interesting notion is obtained by replacing conditional expectations by so-called conditional suprema, leading to \emph{max-martingales}. Both notions play an important role in this paper. In particular, max-martingales turn out to be particularly suitable for dealing with p-processes. We briefly review their definitions and basic properties next.

\subsection{Martingales (based on conditional expectation)}

Given a filtration $(\Fcal_t)$ and a measure $\Ptt$ on  $\Fcal_\infty$, a process $(M_t)_{t\in \NN_0}$ is called a martingale (with respect to $(\Fcal_t)$) if $M_t$ is $\Fcal_t$-measurable, $\Ptt$-integrable, and
\[
\EE_{\Ptt}[M_t | \Fcal_s] = M_s \text{ for any $t$ and $s \leq t$}.
\]
(Sub- and supermartingales are defined by relaxing the martingale property and allowing for inequality, $\ge$ respectively, $\le$.) Since we had earlier mentioned that $\Fcal_0$ includes an initial source of independent randomness, $M_0$ is itself allowed to be random. Naturally, we have $\EE_\Ptt [M_t] = \EE_\Ptt [M_0]$. Often, in this paper, $(M_t)$ will be nonnegative and the latter quantity equals one and so when we say `a nonnegative martingale with initial value one', we implicitly mean with initial \emph{expected} value one.

Given an $\Fcal_\infty$-measurable integrable random variable $Y$, the process $M_t := \EE_\Ptt[Y | \Fcal_t]$ is known as the Doob (or Doob--L\'evy) martingale associated with $Y$. The fact that this is a martingale follows from the tower rule of the conditional expectation: $\EE_{\Ptt}[M_t | \Fcal_s] = \EE_{\Ptt}[\EE_{\Ptt}[Y | \Fcal_t] ~| \Fcal_s] = M_s$ if $s \le t$.

We now generalize these definitions to hold for an entire set of measures.
Given a set $\Pbb$ of measures on $\Fcal_\infty$, a process $(M_t)_{t\in \NN_0}$ is called a nonnegative $\Pbb$-supermartingale ($\Pbb$-NSM) (with respect to $(\Fcal_t)$) if $M_t$ is nonnegative, $\Fcal_t$-measurable, and 
\begin{equation}\label{eq:P-NSM}
\EE_\Ptt [M_t | \Fcal_s] \leq M_{s} \text{ for all $t\in\NN$, $s\leq t$ and every $\Ptt \in \Pbb$}.
\end{equation}
If \eqref{eq:P-NSM} holds with equality, $(M_t)$ is called a nonnegative $\Pbb$-martingale ($\Pbb$-NM). If $\Pbb=\{\Ptt\}$ is a singleton, we write `$\Ptt$-NSM' instead of `$\{\Ptt\}$-NSM'. This notational choice is also applied to other objects.  
We refer to a $\Pcal$-NM (or NSM) as a `composite' NM (or NSM), while a $\Ptt$-NM is called a `pointwise' NM (or NSM).

Doob's optional stopping theorem \cite{durrett_probability:_2017} extends the (sub-/super-) martingale property from deterministic times to stopping times. In general, only bounded stopping times are allowed in the optional stopping theorem; however, the nonnegativity of an NSM relieves us of this restriction. In particular, if $(N_t)$ is (upper bounded by) a $\Pcal$-NSM starting in $N_0$, the optional stopping theorem implies that
\begin{equation*}
\EE_{\Ptt}[N_\tau] \leq \EE_\Ptt[N_0] \text{ for all stopping times $\tau$, potentially infinite, and every $\Ptt \in \Pbb$.} 
\end{equation*}
In fact, if $(M_t)$ is a $\Pcal$-NSM, then we additionally have $\EE_{\Ptt}[M_\tau | \Fcal_\rho] \leq M_\rho$ for all stopping times $\rho$ and $\tau$ such that $\rho \leq \tau$, $\Ptt$-almost surely, for each $\Ptt \in \Pcal$. Further, Doob's supermartingale convergence theorem implies that if $(M_t)$ is a $\Pcal$-NSM with initial expected value one, then its limit $M_\infty := \lim_{t\to\infty}M_t$ exists $\Ptt$-almost surely and $\EE_\Ptt[M_\infty] \in [0,1]$, for each $\Ptt \in \Pcal$.

Stemming from his 1939 PhD thesis~\cite{ville_etude_1939}, Ville's inequality is a time-uniform generalization of Markov's inequality; for our purposes, the relevant version states that if $(M_t)$ is (upper-bounded by) a $\Pbb$-NSM with initial expected value one, then the following three equivalent statements hold:
\begin{subequations}
\begin{align}
\label{eq:Ville}
\Ptt\left(\exists t \in \NN: M_t \geq \frac{1}{\alpha}\right) &\leq \alpha \text{ for every } \Ptt \in \Pbb \text{ and } \alpha \in [0,1];\\
\label{eq:Ville2}
\Longleftrightarrow \qquad \, \sup_{\Ptt \in \Pbb} \Ptt\left(\sup_{t\in\NN} M_t \geq \frac{1}{\alpha}\right) &\leq \alpha \text{ for every }  \alpha \in [0,1];\\
\label{eq:Ville3}
\Longleftrightarrow \qquad  \sup_{\Ptt \in \Pbb, \tau \geq 0} \Ptt\left(M_\tau \geq \frac{1}{\alpha}\right) &\leq \alpha \text{ for every } \alpha \in [0,1].
\end{align}
\end{subequations}
Note that \eqref{eq:Ville2} and \eqref{eq:Ville3}  usually  only hold with  inequality (for example, for the singleton $\Pcal = \{\Ptt\}$) but it can hold with equality for larger nontrivial nonparametric classes $\Pcal$, as we shall encounter later in Section~\ref{sec:anti-concentration}.
We also remark that a conditional version of Ville's inequality is also true, though we do not utilize it much in this paper. Specifically, if $(M_t)$ is a $\Pcal$-NSM, then \begin{equation}\label{eq:conditional-Ville}
    \sup_{\Ptt \in \Pcal} \Ptt\left(\left. \exists t \geq s: M_t \geq \frac{M_s}{\alpha} \right| \Fcal_s \right) \leq \alpha \text{ for every } \alpha \in [0,1].
\end{equation}

\subsection{The relationship between likelihood ratios and nonnegative martingales} 
A simple NM that arises naturally is the likelihood ratio; this is at the heart of Wald's sequential likelihood ratio test~\cite{wald_sequential_1945,wald_sequential_1947}. To be specific, when testing $H_0: X_s \sim Q$ versus $H_0: X_s \sim P$, define the likelihood ratio $M_t := \prod_{s \leq t} \tfrac{\dd P}{\dd Q}(X_s)$, assuming that the Radon-Nikodym derivative $\dd P/\dd Q$ exists. If $P,Q$ have densities $p,q$ with respect to a common measure then each term in the product is just $p(X_s)/q(X_s)$. Let $Q^\infty$ now denote the distribution under which the sequence is i.i.d., each element distributed according to $Q$. Wald effectively proved that $(M_t)$ is a $Q^\infty$-NM and a test that rejects if $M_t \geq 1/\alpha$  controls the Type-I error at level $\alpha$ due to Ville's inequality (Wald proved the result from scratch, but he was not unaware of the evolving language of martingales and Ville's thesis~\citep{bienvenu2009history}). It is also apparent that every nonnegative martingale is a product of nonnegative random variables with conditional mean one, meaning that if $(M_t)$ is a $\Qcal$-NM, then $M_t = \prod_{s \leq t} Y_s$, where $(Y_t)$ is adapted to $(\Fcal_t)$ and $\EE_\Qtt[Y_t|\Fcal_{t-1}]=1$ for every $\Qtt \in \Qcal$; to see this, simply define the multiplicative increment as $Y_t := M_t/M_{t-1}$ with $0/0:=1$. At first sight, despite having a product form, it may appear like nonnegative martingales are strict generalizations of likelihood ratio processes. However, in fact, a converse statement is also true: not only is every likelihood ratio a martingale (under the null), but every martingale is also implicitly a likelihood ratio; this was discussed by \citet{shafer_test_2011} for point nulls, and we generalize it below to the composite case, borrowing the terminology of `implied alternative' from \citet{shafer2019language}.

To make the following result precise we assume that the sequence of observations $(U, (X_t)_{t\in\NN})$ is a process on the space $\Omega=\RR^\NN$ of real-valued sequences, and we let $(\Fcal_t)$ be the canonical filtration.

\begin{proposition}\label{prop:NM-likelihood}
Consider any composite null set $\Qcal$ of probability measures on $\Fcal_\infty$, and let $\Pcal$ consist of all probability measures $\Ptt$ that are locally absolutely continuous with respect to some $\Qtt \in \Qcal$. (Thus in particular, $\Qcal \subset \Pcal$.) If $(M_t)$ is a $\Qcal$-NM starting at one, then for every $\Qtt \in \Qcal$ there exists some `implied alternative' distribution $\Ptt \in \Pcal$ (depending on $\Qtt$) that is locally dominated by $\Qtt$, such that $M_t = \dd\Ptt_t/\dd\Qtt_t$. In other words, a composite nonnegative martingale is a `composite' likelihood ratio (meaning, it takes the form of a likelihood ratio under every element of the null).
\end{proposition}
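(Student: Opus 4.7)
The plan is to fix an arbitrary $\Qtt\in\Qcal$ and construct the implied alternative $\Ptt$ by prescribing its restrictions to each $\Fcal_t$ via the density $M_t$, and then glue these restrictions together using the canonical-space hypothesis.

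First I would define, for each $t\in\NN_0$, a probability measure $\Ptt_t$ on $\Fcal_t$ by
\[
\Ptt_t(A) := \int_A M_t \,\dd\Qtt_t, \qquad A\in\Fcal_t.
\]
Because $(M_t)$ is a $\Qtt$-NM starting at $1$ we have $M_t\ge 0$ and $\EE_\Qtt[M_t]=\EE_\Qtt[M_0]=1$, so $\Ptt_t$ is indeed a probability measure on $\Fcal_t$, and by construction $\Ptt_t\ll\Qtt_t$ with $\dd\Ptt_t/\dd\Qtt_t = M_t$.

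Next I would verify the Kolmogorov consistency condition: for $s\le t$ and $A\in\Fcal_s\subset\Fcal_t$,
\[
\Ptt_t(A) = \int_A M_t\,\dd\Qtt = \int_A \EE_\Qtt[M_t\mid\Fcal_s]\,\dd\Qtt = \int_A M_s\,\dd\Qtt = \Ptt_s(A),
\]
where the middle equality uses $A\in\Fcal_s$ and the third uses the $\Qtt$-martingale property of $(M_t)$. Thus $\{\Ptt_t\}_{t\in\NN_0}$ is a consistent projective family on the canonical filtration of $\Omega=\RR^\NN$. Since $\Omega$ is a standard Borel space and $(\Fcal_t)$ is the canonical filtration (enlarged only by the independent $U$, which adds nothing that causes trouble), the Kolmogorov extension theorem produces a unique probability measure $\Ptt$ on $\Fcal_\infty=\sigma(\bigcup_t\Fcal_t)$ whose restriction to each $\Fcal_t$ equals $\Ptt_t$. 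Hence $\Ptt$ is locally dominated by $\Qtt$ with $\dd\Ptt_t/\dd\Qtt_t=M_t$, so $\Ptt$ belongs to $\Pcal$ by the definition of $\Pcal$ in the statement.

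The only nontrivial step is the appeal to the extension theorem, which is exactly why the proposition's setup fixes $\Omega=\RR^\NN$ with the canonical filtration; without such a regularity assumption one cannot in general glue the consistent family $(\Ptt_t)$ into a single measure on $\Fcal_\infty$. The rest is a routine verification that the martingale property is precisely the cocycle identity needed for consistency, and that the density relation $M_t=\dd\Ptt_t/\dd\Qtt_t$ then holds pathwise. The statement should be carried out once for each $\Qtt\in\Qcal$ separately, yielding possibly different implied alternatives $\Ptt=\Ptt(\Qtt)\in\Pcal$, which is all the proposition asks for.
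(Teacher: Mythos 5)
Your proof is correct and follows essentially the same route as the paper's: define the consistent family $\Ptt_t(A)=\EE_\Qtt[M_t\1_A]$ on $\Fcal_t$, observe that the martingale property is exactly Kolmogorov consistency, and invoke the extension theorem on the canonical space $\Omega=\RR^\NN$. Your write-up is in fact slightly more careful than the paper's in spelling out why each $\Ptt_t$ is a probability measure and why the local domination $\dd\Ptt_t/\dd\Qtt_t=M_t$ follows.
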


We recognize that the above statement may be known to different researchers in some form, but it does provide useful intuition and we have not seen it stated in the generality above in the statistics literature. The proof is in Section~\ref{sec:proofs}. The informal takeaway message is that nonnegative martingales are implicitly likelihood ratios, but the former are typically easier to identify (or construct) in composite null settings.

\subsection{Max-martingales (based on conditional supremum)} \label{SS:MM}

Max-martingales are defined by replacing the conditional expectation by the \emph{conditional supremum}, so we start by reviewing this notion; more information can be found in \citet{bar_car_jen_03} and \citet{larsson2018conditional}; see also Appendix~\ref{sec:esssup}. For a given probability measure $\Ptt$, random variable $Y$, and sub-$\sigma$-algebra $\Gcal$, the \emph{$\Gcal$-conditional supremum} is defined as the smallest $\Gcal$-measurable almost sure upper bound on $Y$:
\[
{\textstyle\bigvee_\Ptt} [ Y \mid \Gcal ] := \essinf\{ Z \colon \text{$Z$ is $\Gcal$-measurable and $Z \ge Y$,  $\Ptt$-almost surely} \}.
\]

Note that ${\textstyle\bigvee_\Ptt} [ Y \mid \Gcal ] \ge Y$ by construction, and is the smallest $\Gcal$-measurable random variable with that property. (Here and in the rest of this subsection, equalities are understood in the $\Ptt$-almost sure sense.) The conditional supremum can be viewed as a nonlinear analog of the conditional expectation, and has similar properties. In particular, one has a `tower property' which states that for nested sub-$\sigma$-algebras $\Gcal \subset \Hcal$, one has
\[
{\textstyle\bigvee_\Ptt} \Big[ {\textstyle\bigvee_\Ptt} [ Y \mid \Hcal ] \Big| \Gcal \Big] = {\textstyle\bigvee_\Ptt} [ Y \mid \Gcal ].
\]
Given a filtration $(\Fcal_t)$, a process $(Y_t)$ is called a \emph{$\Ptt$-max-martingale}, or $\Ptt$-MM for short,
if
\begin{equation*}
Y_s = {\textstyle\bigvee_\Ptt} [ Y_t \mid \Fcal_s ], \quad s\le t.
\end{equation*}
Any max-martingale $(Y_t)$ is almost surely decreasing, which ensures that the limit $ \lim_{t\to\infty}Y_t$ exists in $[-\infty, \infty]$. We call a max-martingale \emph{closed} if \smash{$Y_t = \bigvee_\Ptt [\lim_{s\to\infty}Y_s \mid \Fcal_t ]$ for all $t\in\NN_0$}.

As earlier, given a set $\Pbb$ of measures on $\Fcal_\infty$, a process $(Y_t)$ is called a (closed) $\Pbb$-max-martingale ($\Pbb$-MM)  if $(Y_t)$ is a (closed) $\Ptt$-MM for each $\Ptt \in \Pbb$.

One can also introduce notions of sub- and supermartingales using the conditional supremum, although we will not use these here. Moreover, one can analogously define \emph{conditional infimum} martingales using $\bigwedge$ instead of $\bigvee$, and all properties stated above and below also hold analogously.

In further analogy with (standard) martingales, max-martingales satisfy an optional stopping theorem \cite[Lemma~2.10]{larsson2018conditional}. Specifically, consider the process $Y_t := \bigvee_\Ptt[ Y \mid \Fcal_t ]$ for an $\Fcal_\infty$-measurable $Y$. Thanks to the tower property, $(Y_t)$ is then a max-martingale; we call such a construction a  Doob-L\'evy MM. We then have
\[
Y_\tau = {\textstyle\bigvee_\Ptt} [ Y \mid \Fcal_\tau ] \qquad \text{and} \qquad Y_\rho = {\textstyle\bigvee_\Ptt} [ Y_\tau \mid \Fcal_\rho ]
\]
for all finite stopping times $\rho$ and $\tau$ such that $\rho \le \tau$.

The connection between $\Ptt$-MMs and $\Ptt$-NMs goes beyond mere analogies. If $(M_t)$ is a $\Ptt$-NM with $M_0>0$, 
the following statement easily follows from \cite[Proposition~4.1]{larsson2018conditional}:
\begin{align}\label{eq_inf_M_recip_mg}
\inf_{s \le t} \frac{1}{M_s} = {\textstyle\bigvee_\Ptt} \Big[ \inf_{s \in \NN_0} \frac{1}{M_s} \Big| \Fcal_t \Big], \qquad t \in \NN_0.
\end{align}

A word of warning: although the conditional supremum and conditional expectation are in some ways similar, they sometimes behave very differently. In particular, the conditional supremum only depends on the underlying measure $\Ptt$ through its zero measure sets. Computing the conditional supremum under a different measure $\Ptt'$ thus gives the same result whenever the two measures are mutually absolutely continuous. This is in stark contrast to the behavior of the conditional expectation. (On the other hand, if $\Ptt$ and $\Ptt'$ are mutually singular, as is often the case in our infinite-horizon situations, then one can make no general statements about the relation between the corresponding conditional suprema.)

Although any nonnegative max-martingale $(Y_t)$ is almost surely decreasing, which ensures that the limit $ \lim_{t\to\infty}Y_t$ exists, it is possible that $Y_t$ and $\bigvee_\Ptt [\lim_{s\to\infty}Y_s \mid \Fcal_t ]$ are not necessarily the same, i.e., that $(Y_t)$ is not closed.  
Indeed, we have the following non-uniqueness property. If $(Y_t)$ is a max-martingale then we usually can find another max-martingale $(Y_t')$ with $Y_t' > Y_t$ for each $t \in \NN$,  but $(Y'_t)$ and $(Y_t)$ have the same limit, almost surely. For example, assume
$(Z_t)$ is i.i.d.\ Bernoulli($1/2$), independent of $(Y_t)$, and adapted to $(\Fcal_t)$. Define $Y_t' = Y_t + \prod_{s\leq t} (Z_s \vee 1/2)$. Then $(Y_t')$ is also a max-martingale, with $Y_t' \geq Y_t + 2^{-t} > Y_t$ for all $t \in \NN$, but converging almost surely to the same limit as $Y'_t$ as $t\to\infty$.

A similar phenomenon also holds true for martingales: if $(M_t)$ is an NM then 
$M_t' = M_t + \prod_{s \leq t} (Z_s + 1/2)$ is another NM with  $M_t' \geq M_t + (1/2)^t > M_t$ for all $t \in \NN$, but $\lim_{t \to \infty} M'_t = \lim_{t \to \infty} M_t$. Hence also for nonnegative martingales, we often do not have that $M_t$ and $\EE[\lim_{s \rightarrow \infty} M_s|\Fcal_t]$ are the same.

We conclude this subsection with an interesting example.
\begin{example} \label{ex:200818}
    Let $V$ denote a uniformly distributed random variable and assume that $(X_t)$ is, conditionally on $V$, i.i.d.~$\mathrm{Bernoulli}(V)$. Then $V = \lim_{t \rightarrow \infty} \sum_{s \leq t} X_s / t$ (the limit frequency of ones), hence $V$ is $\Fcal_\infty$-measurable. Moreover, $Y_t := \bigvee_\Ptt[ V \mid \Fcal_t ]$ yields a max-martingale. It is now relatively easy to check that $Y_t = 1$ for each $t \in \NN_0$.  This yields an instance where $\lim_{t \rightarrow \infty} Y_t \neq V$, but $(Y_t)$ is nonetheless a closed max-martingale since $Y_t = 1 = \bigvee_\Ptt[ 1 \mid \Fcal_t ]$.
\end{example}

\section{Tools for anytime sequential inference and their admissibility}
\label{sec:the-4-tools}

We formally introduce the four instruments for anytime-valid sequential inference that play central roles in our paper. We present a rather natural definition of admissibility for each of the instruments below, but recognize that other alternatives may be suitable depending on the context.

\subsection{$\Qcal$-p-processes, $\Qcal$-e-processes, and $(\Qcal,\alpha)$-sequential tests}
Let the (unknown) distribution of the observed data sequence be denoted $\Ptt$. Suppose we wish to test the null $H_0: \Ptt \in \Qcal$ for some $\Qcal \subset \Pbb$, against $H_1: \Ptt \in \Pcal$ (or against $H_1: \Ptt \in \Pcal \backslash \Qcal$).
A sequence $(\Pval_t)$ is called a p-process for $\Qcal$ if it satisfies
\begin{equation*}
\Qtt(\Pval_\tau \leq \alpha)
\leq \alpha \text{ for arbitrary  stopping times $\tau$, every } \Qtt \in \Qcal, \text{ and } \alpha \in [0,1].
\end{equation*}
Above, we have implicitly defined and used
\[
\Pval_\infty := \liminf_{t \to \infty} \Pval_t.
\]
For succinctness, we say $(\Pval_t)$ is a \underline{$\Qcal$-p-process}, or simply a p-process if $\Qcal$ is clear from context.
 By Lemma~\ref{lem:composite_equiv_uniform_defns}, the above condition is equivalent to requiring that 
\[
\Qtt(\exists t \in \mathbb N: \Pval_t \leq \alpha) \leq \alpha \text{ for every } \Qtt \in \Qcal \text{ and } \alpha \in [0,1].
\]
Note that $(\Pval_t)$ is a p-process if and only if the running infimum $(\inf_{s \leq t} \Pval_s)$ is a p-process, so it helps to think of $(\Pval_t)$ as a nonincreasing sequence; in this case, we have $\Pval_\infty = \lim_{t\to\infty} \Pval_t$ (the limit exists and equals the $\liminf$ used earlier).
When we refer only to the validity of the single random variable $\Pval_t$ and not the sequence $(\Pval_t)$, we say that $\Pval_t$ is a p-value for $\Qcal$, meaning that its distribution is stochastically larger than uniform under any $\Qtt \in \Qcal$. 
The connection to Ville's inequality \eqref{eq:Ville}, and thus to martingales, should be  apparent. 

A sequence $(\Eval_t)$ is called an e-process for $\Qcal$, or a \underline{$\Qcal$-e-process}, if
\begin{equation*}
\EE_{\Qtt}[\Eval_\tau] \leq 1 \text{ for arbitrary  stopping times $\tau$, and every } \Qtt \in \Qcal.
\end{equation*}
Analogous to the p-process case above, we have implicitly defined and used 
\[
\Eval_\infty := \limsup_{t \to \infty} \Eval_t.
\]
In fact, e-processes themselves can be defined in many equivalent ways, and we record this below.

\begin{lemma}\label{L:e-process_definitions}
For any set of distributions $\Qcal$ on any filtered measurable space $(\Omega,(\Gcal_t))$, and any adapted nonnegative process $(\Eval_t)$, the following statements are equivalent:
\begin{enumerate}[label={\rm(\roman{*})}, ref={\rm(\roman{*})}]
    \item\label{L:e-process_definitions.1} $\EE_{\Qtt}[\Eval_\tau] \leq 1 \text{ for arbitrary  stopping times $\tau$, and every } \Qtt \in \Qcal$, where $\Eval_\infty := \limsup_{t \to \infty} \Eval_t$.
    \item\label{L:e-process_definitions.2} $\EE_{\Qtt}[\Eval_\tau] \leq 1 \text{ for arbitrary  stopping times $\tau$, and every } \Qtt \in \Qcal$, where $\Eval_\infty := \liminf_{t \to \infty} \Eval_t$.
   
     \item\label{L:e-process_definitions.3} $\EE_{\Qtt}[\Eval_\tau] \leq 1 \text{ for arbitrary \emph{finite} stopping times $\tau$, and every } \Qtt \in \Qcal.$
      \item\label{L:e-process_definitions.4} $\EE_{\Qtt}[\Eval_\tau] \leq 1 \text{ for arbitrary \emph{bounded} stopping times $\tau$, and every } \Qtt \in \Qcal.$
     \item\label{L:e-process_definitions.5} $(\Eval_t)$ is almost surely upper bounded by some $\Qtt$-NM for each $\Qtt\in\Qcal$.
     \item\label{L:e-process_definitions.6} $(\Eval_t)$ is almost surely upper bounded by some $\Qtt$-NSM for each $\Qtt\in\Qcal$.
\end{enumerate}
Above, the stopping times and (super)martingales are also defined relative to the filtration $(\Gcal_t)$. 
\end{lemma}
Property \ref{L:e-process_definitions.1} is used as the defining property in~\citet{grunwald_safe_2019}, while \ref{L:e-process_definitions.6} is the defining property used in~\citet{howard_exponential_2018}, and while these seem to be visually different, they are in fact identical.
The proof below establishes that property \ref{L:e-process_definitions.4} implies \ref{L:e-process_definitions.5}, by constructing the Snell envelope (under each $\Qtt$) for $(\Eval_t)$, and taking its Doob decomposition. The optional stopping theorem for nonnegative supermartingales establishes that property \ref{L:e-process_definitions.6} implies \ref{L:e-process_definitions.1}. All other implications are straightforward.

\begin{proof}
The implications \ref{L:e-process_definitions.1} $\Rightarrow$ \ref{L:e-process_definitions.2} $\Rightarrow$ \ref{L:e-process_definitions.3} $\Rightarrow$ \ref{L:e-process_definitions.4} and \ref{L:e-process_definitions.5} $\Rightarrow$ \ref{L:e-process_definitions.6} $\Rightarrow$ \ref{L:e-process_definitions.1} are immediate. We thus focus on the non-trivial implication \ref{L:e-process_definitions.4} $\Rightarrow$ \ref{L:e-process_definitions.5}. To do so we fix any $\Qtt \in \Qcal$ and define the \emph{Snell envelope} of  $(\Eval_t)$ as the process $(L_t)$ given by
\[
L_t := 
\esssup_{\tau \geq t} \EE_\Qtt[\Eval_\tau \mid \Gcal_t],
\]
where $\tau$ ranges over all bounded stopping times. 

It is clear that $L_t \geq \Eval_t$ since $\tau=t$ is one of the stopping time occurring in the essential supremum, for all $t \in \NN_0$. We claim that $(L_t)$ is a $\Qtt$-supermartingale with $\EE_\Qtt[L_0] \le 1$. The supermartingale property of the Snell envelope is well-known, but for the convenience of the reader we include the short proof. This uses properties of the essential supremum reviewed in Appendix~\ref{sec:esssup}, in particular Proposition~\ref{P_esssup_closed_max}.

For each fixed $t \in \NN_0$, $L_t$ is the essential supremum of the family consisting of all $\EE_\Qtt[M_\tau|\Gcal_t]$ where $\tau\ge t$ is a bounded stopping time. This family is closed under maxima. To see this, let $\tau$ and $\tau'$ be two bounded stopping times, define $A=\{
\EE_\Qtt[\Eval_\tau|\Gcal_t] >\EE_\Qtt[\Eval_{\tau'}\mid\Gcal_t]\}$, and set $\tau'' = \tau\mathbf1_A + \tau'\mathbf1_{A^c}$. Since $A$ lies in $\Gcal_t$ and $\tau,\tau'\ge t$, we have that $\tau''$ is a (bounded) stopping time and we obtain
\[
   \EE_\Qtt[\Eval_{\tau''}\mid\Gcal_t] = \1_A\EE_\Qtt[\Eval_{\tau}\mid\Gcal_t] + \1_{A^c} \EE_\Qtt[\Eval_{\tau'}\mid\Gcal_t] = \max\left\{\EE_\Qtt\left[\Eval_{\tau}\mid\Gcal_t\right],\EE_\Qtt\left[\Eval_{\tau'}\mid\Gcal_t\right]\right\}.
\]
This demonstrates closure under maxima. Consequently we can apply Proposition~\ref{P_esssup_closed_max} to obtain a sequence of bounded stopping times $\{\tau_n\}_{n \in \NN}$ with $\tau_n\ge t$ such that $\EE_\Qtt[\Eval_{\tau_n}|\Gcal_t]\uparrow L_t$ almost surely. The conditional version of the monotone convergence theorem, the tower rule, and the definition of $L_{t-1}$, then yield
\begin{align*}
   \EE_\Qtt[L_t\mid\Gcal_{t-1}] &=\EE_\Qtt\left[\left.\lim_{n \to \infty} \EE_\Qtt[\Eval_{\tau_n}\mid\Gcal_t]\right|\Gcal_{t-1}\right] = \lim_{n \to \infty}\EE_\Qtt[\EE_\Qtt[\Eval_{\tau_n}\mid\Gcal_t]\mid\Gcal_{t-1}] \\
   &= \lim_{n \to \infty}\EE_\Qtt[\Eval_{\tau_n}\mid\Gcal_{t-1}] \le L_{t-1}.
\end{align*}
Note that this argument applies even with $t=0$, where we use the convention that $\Gcal_{-1}$ is the trivial sigma-field and $L_{-1} = 1$. This is where the hypothesis \ref{L:e-process_definitions.4} is used: in the last inequality we have $\EE_\Qtt[\Eval_{\tau_n}] \leq 1$. We conclude that $\EE_\Qtt[L_0] \le 1$, that each $L_t$ is integrable, and that $(L_t)$ is a supermartingale.

Since we have established that the Snell envelope $(L_t)$ is a supermartingale we can write down its Doob decomposition as
$L_t = M_t - A_t$ for a unique $\Qtt$-martingale $(M_t)$ with $M_0=L_0$, and a unique nondecreasing predictable process $(A_t)$ with $A_0=0$. Thus
$M_t \geq L_t \geq \Eval_t$ $\Qtt$-almost surely for all $t \in \NN_0$, showing that $(M_t)$ is the required $\Qtt$-NM that upper bounds $(\Eval_t)$. 
\end{proof}

It is worth noting that already in the case of a singleton $\Qcal :=\{\Qtt\}$, if one additionally desired a conditional safety property to hold for $(\Eval_t)$, namely that $\EE_{\Qtt}[\Eval_\tau | \Fcal_s] \leq \Eval_{s \wedge \tau}$ for arbitrary times $s$ and stopping times $\tau$, then $(\Eval_t)$ must necessarily be a $\Qtt$-NSM; indeed by taking $\tau = t$ for $t \geq s$, we recover the definition of a $\Qtt$-NSM.
In fact, if one would like such a property to hold for every $\Qtt$ in a composite $\Qcal$, such a requirement can only be satisfied by a $\Qcal$-NSM. 
Despite the fact that we will not require this conditional property, we will see that (composite) $\Qcal$-NMs or (pointwise) $\Qtt$-NMs play a central, universal role in constructing e-processes that may not themselves be martingales.

A p-process or e-process does not directly yield a decision rule for when to reject the null hypothesis; they are real-valued measures of evidence, and need to be coupled with a decision rule in order to yield a test.
A binary sequence $(\psi_t)$ is called a $(\Qcal,\alpha)$-sequential test for $H_0$---or 
\underline{$(\psi_t)$ is a $(\Qcal,\alpha)$-ST}---if
\begin{equation*}
\EE_{\Qtt}[\psi_\tau] = \Qtt(\psi_\tau = 1) \leq \alpha \text{ for arbitrary  stopping times $\tau$, and every } \Qtt \in \Qcal.
\end{equation*}
As with p-processes, we think of $(\psi_t)$ as nondecreasing, and have implicitly defined $\psi_\infty := \lim_{t \to \infty} \psi_t$.

\subsection{Defining inadmissibility and admissibility} 
We follow the standard convention of using the term `admissible' as a shorthand for `not inadmissible', so we only define inadmissibility below.

We say that $(\Pval_t)$ is \underline{inadmissible} if there exists $(\Pval'_t)$ that is a $\Qcal$-p-process, and is always at least as good but sometimes strictly better; more formally,
 $\Qtt(\Pval'_t \leq \Pval_t) = 1$ for all $t \in \NN_0$ and all $\Qtt\in\Qcal$, and that $\Qtt(\Pval'_t < \Pval_t) > 0$ for some $t \in \NN_0$, and some $\Qtt\in\Qcal$. 
We say that $(\Eval_t)$ is inadmissible if there exists a $\Qcal$-e-process $(\Eval'_t)$, such that $\Qtt(\Eval'_t \geq \Eval_t)=1$ for all $t \in \NN_0$ and all $\Qtt \in \Qcal$, and also $\Qtt(\Eval'_t > \Eval_t)>0$ for some $t \in \NN_0$, and some $\Qtt \in \Qcal$. 
Finally, $(\psi_t)$ is inadmissible if there exists a  $(\Qcal,\alpha)$-sequential test $(\psi'_t)$, such that $\Qtt(\psi'_t \geq \psi_t)=1$ for all $t \in \NN_0$ and all $\Qtt \in \Qcal$, and $\Qtt(\psi'_t > \psi_t) > 0$ for some $t \in \NN_0$, and some $\Qtt \in \Qcal$.

\begin{remark}
We could have equivalently formulated admissibility in terms of $\Qtt(\Eval'_\tau > \Eval_\tau) > 0$ for some stopping time $\tau$, etc.; this is in fact identical to the current definition, because for any discrete-time random sequences $(W_t)$ and $(W'_t)$, the statement `$\Qtt(W'_t=W_t)=1$ for all $t$' is equivalent to `$\Qtt(W'_t=W_t \text{ for all } t)=1$', meaning that pointwise equality at fixed times yields simultaneous equality (including stopping times). In case it is useful to the reader, another restatement of inadmissibility is the following:
\begin{equation}\label{eq:inadmissible-rewrite}
\inf_{\Qtt \in \Qcal} \Qtt(\forall t \in \NN_0: \Pval'_t \leq \Pval_t) = 1 \text{ and } \sup_{\Qtt \in \Qcal} \Qtt(\exists t \in \NN_0: \Pval'_t < \Pval_t) > 0.
\end{equation}
\end{remark}

The above definition of admissibility does not specify any alternative. What allows us to do so is that the admissibility conditions are stated `almost surely'. To elaborate, assume there was an alternative, say $\Ptt^*$ such that there existed some time $t$ and an event $A \in \Fcal_t$ with $\Ptt^*(A) > 0$ but $\Qtt(A) = 0$ for all $\Qtt \in \Qcal$. In this case, on the event $A$ we would always set $\Pval_t = 0$, $\Eval_t = \infty$, and $\psi_t = 1$. We could always do so because no $\Qtt \in \Qcal$ `is aware of' the event $A$, hence modifying a p-process on $A$ would not change any of its $\Qtt$-distributional properties (like validity). For this reason, and to avoid any notational complication arising, from now on \underline{we always assume} the following:
\begin{quote}
If there exists some $t \in \NN$, some $A \in \Fcal_t$, and some $\Ptt^* \in \Pcal$ with $\Ptt^*(A) > 0$ then there exists also some $\Qtt \in \Qcal$  with $\Qtt(A) > 0$.
\end{quote}
This is a very mild assumption! It is, for example, satisfied if there exists some $\Qtt^* \in \Qcal$ such that every $\Ptt \in \Pcal$ is locally absolutely continuous with respect to $\Qtt^*$ (see Appendix~\ref{sec:ref_measure} for a review). This previous condition is satisfied, for example, if all measures in $\Pcal$ are locally equivalent. 
In interesting situations a locally dominating measure $\Qtt^*$ 
may not actually exist---for example when testing for symmetry, as we encounter later---but our standing assumption above is still true. Specifically, the aforementioned assumption also holds if each $\Ptt \in \Pcal$  is locally absolutely continuous with respect to some specific $\Qtt \in \Qcal$ (potentially different for each $\Ptt$)---this is often easier to check, and indeed holds in the symmetric example.

Under the above assumption, the inadmissibility condition~\eqref{eq:inadmissible-rewrite} holds with $\Qtt,\Qcal$ swapped to $\Ptt, \Pcal$.


Note that validity is subset-proof, meaning that validity for $\Qcal$ implies validity for any subset of $\Qcal$. In the presence of validity, admissibility is --- under a condition involving polar sets --- superset-proof. The following proposition will play a key role in Section~\ref{sec:subG}.

\begin{proposition}\label{prop_adm_superset_proof}
Suppose $\Qcal \subset \Qcal'$ are such that for all $t \in \NN_0$, each $\Qcal$-polar set $A \in \Fcal_t$ is $\Qcal'$-polar; this means that if $\Qtt(A) = 0$ for all $\Qtt \in \Qcal$, then $\Qtt'(A) = 0$ for all $\Qtt' \in \Qcal'$. Let $(\Eval_t)$ be a $\Qcal'$-e-process (and thus also a $\Qcal$-e-process). If $(\Eval_t)$ if $\Qcal$-admissible then it is $\Qcal'$-admissible.
\end{proposition}
\begin{proof}
Let $(\Eval_t)$ be as described in the proposition, and assume it is $\Qcal$-admisible. We must show it is $\Qcal'$-admissible. Consider another $\Qcal'$-e-process $(\Eval'_t)$ such that $\Qtt'(\Eval'_t \ge \Eval_t) = 1$ for all $t \in \NN_0$ and all $\Qtt' \in \Qcal'$, hence all $\Qtt \in \Qcal$. By $\Qcal$-admissibility, $\Qtt(\Eval'_t = \Eval_t) = 1$ for all $\Qtt \in \Qcal$ and then also for all $\Qtt \in \Qcal'$ thanks to the assumption regarding the polar sets. Thus $(\Eval_t)$ is $\Qcal'$-admissible.
\end{proof}

We sometimes call $\Qcal$-admissibility as just admissibility if $\Qcal$ is clear from context.


A family of sequential tests $\{(\psi_t(\alpha))\}_{\alpha \in [0,1]}$ is said to be `nested in $\alpha$', if $\psi_t(\alpha') \geq \psi_t(\alpha)$ for any $t$ and $\alpha' \geq \alpha$. 
This ensures that it is easier to reject the null with a larger error budget. 

We conclude this subsection with the following observation.
\begin{proposition} \label{P:200905}
Assume that $\Qcal$ is locally dominated. Then any p-process for $\Qcal$, e-process for $\Qcal$, or $(\Qcal, \alpha)$-sequential test can  be dominated by an admissible p-process, e-process, or sequential test, respectively. 
\end{proposition}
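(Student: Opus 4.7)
\textbf{Proof plan for Proposition~\ref{P:200905}.} The natural route is a Zorn's lemma argument applied to the set of objects dominating the given one, partially ordered by domination. The key technical ingredient is the existence of a locally dominating reference measure $\Rtt$ (guaranteed by hypothesis), which lets us reason about uncountable families via essential infima/suprema with respect to $\Rtt$; since every $\Ptt \in \Qcal$ is locally absolutely continuous with respect to $\Rtt$, every $\Rtt$-almost sure statement is a $\Qtt$-almost sure statement for every $\Qtt \in \Qcal$, so $\Rtt$-essinf/esssup constructions transport correctly to all of $\Qcal$.

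For a $\Qcal$-valid $p$-value $(\Pval_t)$, let $\mathcal{S}$ denote the collection of all $\Qcal$-valid $p$-values dominating $(\Pval_t)$, ordered by $\Pval'' \preceq \Pval'$ iff $\Pval''$ dominates $\Pval'$. A maximal element of $\mathcal{S}$ is admissible and dominates $(\Pval_t)$, so the only thing to verify for Zorn's lemma is that every chain $\mathcal{C} = \{\Pval^{(\alpha)}\}_{\alpha \in A} \subseteq \mathcal{S}$ has an upper bound. Define, for each $t$, $\Pval^*_t := \essinf_\alpha \Pval^{(\alpha)}_t$ under $\Rtt$. Since a chain is in particular downward directed, the standard essential-infimum existence theorem yields a countable subfamily $\{\alpha_n\}$ (which I would take nonincreasing by refinement) such that $\Pval^*_t = \inf_n \Pval^{(\alpha_n)}_t$ $\Rtt$-a.s., and this representation makes $\Pval^*_t$ a concrete $\Fcal_t$-measurable random variable. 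By construction $\Pval^*_t \le \Pval^{(\alpha)}_t$ $\Rtt$-a.s. (hence $\Qtt$-a.s.) for every $\alpha$ and every $\Qtt \in \Qcal$, so $\Pval^*$ dominates each element of $\mathcal{C}$.

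For validity of $\Pval^*$ at an arbitrary stopping time $\tau$, any $\Qtt \in \Qcal$, and any $a \in [0,1]$: since $(\Pval^{(\alpha_n)}_\tau)$ is nonincreasing, $\{\Pval^*_\tau < a\} = \bigcup_n \{\Pval^{(\alpha_n)}_\tau < a\}$, and by continuity from below
\[
\Qtt(\Pval^*_\tau < a) \;=\; \lim_n \Qtt(\Pval^{(\alpha_n)}_\tau < a) \;\leq\; a,
\]
from which $\Qtt(\Pval^*_\tau \leq a) = \lim_{\varepsilon \downarrow 0} \Qtt(\Pval^*_\tau < a + \varepsilon) \leq a$, using Lemma~\ref{lem:composite_equiv_uniform_defns} to pass between the stopping-time and countable-union formulations of validity. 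Thus $\Pval^* \in \mathcal{S}$ and Zorn's lemma applies. The arguments for $e$-values and sequential tests are strictly parallel, using essential \emph{suprema} (with a nondecreasing countable subsequence) and monotone convergence to preserve $\EE_\Qtt[\Eval^*_\tau] \le 1$ and $\Qtt(\psi^*_\tau = 1) \leq \alpha$ respectively; for tests one additionally notes that a pointwise supremum of $\{0,1\}$-valued random variables is again $\{0,1\}$-valued.

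\textbf{Main obstacle.} The delicate step is producing a genuine $\Fcal_t$-adapted representative of the essinf/esssup, rather than merely an $\Rtt$-equivalence class, and ensuring the resulting process remains valid simultaneously for all $\Qtt \in \Qcal$ and all stopping times. This is where the local-domination hypothesis does real work: without it, an essinf chosen under $\Rtt$ need not behave correctly under measures $\Qtt$ mutually singular with $\Rtt$, and the chain upper-bound step would fail. The countable-subsequence representation of essinf/esssup on directed families (standard, but essential here) is what bridges the abstract $\Rtt$-construction to the measure-theoretically concrete $\Fcal_t$-measurable process that the definitions demand.
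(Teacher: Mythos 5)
Your Zorn's-lemma argument is correct and reaches the same conclusion as the paper, but by a genuinely different mechanism. The paper runs a transfinite recursion over countable ordinals: strictly improve at each successor ordinal whenever the current object is inadmissible, take the (countable, monotone) limit at limit ordinals, and prove termination by observing that $\beta\mapsto\EE_\Rtt[\sum_{t\in\NN_0}2^{-t}\Pval^\beta_t]$ is a strictly decreasing $[0,2]$-valued transfinite sequence, hence eventually stationary; local domination by $\Rtt$ is used precisely to turn ``strict domination under some $\Qtt$'' into ``strict decrease of a single real number.'' You instead verify the chain condition for Zorn, which forces you to tame possibly uncountable chains; you do this with the essential infimum under $\Rtt$ and its countable-subfamily representation for directed families, after which your validity-preservation computation (continuity from below, resp.\ monotone convergence) is essentially the same calculation the paper performs at limit ordinals. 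Your route is arguably cleaner in that it avoids choosing a concrete potential function, at the cost of invoking the essential-extremum machinery; the paper's route only ever needs countable monotone limits. Three small points to tidy up: (i) as written, $\Pval''\preceq\Pval'$ iff $\Pval''$ dominates $\Pval'$ makes a \emph{maximal} element the worst $p$-value rather than the best --- reverse the order (or ask for a minimal element); (ii) the countable subfamily produced by the essential-infimum theorem depends a priori on $t$, so you should take the union of these subfamilies over $t\in\NN_0$ (still countable) before refining to a monotone sequence, which then also gives you the $\Fcal_t$-measurable representative $\Pval^*_t:=\inf_n\Pval^{(\alpha_n)}_t$ directly; (iii) domination is only a preorder, so ``maximal'' must be read modulo $\Qtt$-a.s.\ equality for all $\Qtt\in\Qcal$ --- which is exactly what admissibility requires, so no harm is done. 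None of these affects the substance of the argument.
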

The proof (Appendix~\ref{sec:proofs}) uses transfinite induction. \smash{We now move from testing to estimation.}


\subsection{$(\phi,\Pbb,\alpha)$-confidence sequences}

 Let $\phi$ be a measurable map from $\Pcal$ to an arbitrary set $\Zcal$. For every $\gamma \in \Zcal$, define
\[
\Pcal^\gamma := \{\Ptt \in \Pcal: \phi(\Ptt) = \gamma\},
\]
and note that $\{\Pcal^\gamma\}_{\gamma \in \Zcal}$ is a partition of $\Pcal$.
Of special interest is the i.i.d.\ case where $\Ptt=\mu^\infty$. As examples, consider $\phi(\Ptt) = \phi^{\rm{med}}(\mu)$ denoting the median of $\mu$, or $\phi(\Ptt) = \phi^{\rm{mean}}(\mu)$ denoting the mean of $\mu$. Another case of interest is where each $\Ptt \in \Pbb$ can be represented as $\Ptt_\theta$ for some unique $\theta \in \Theta$ (the `parametric' setting) and then $\phi(\Ptt_\theta) = \theta$. One may also care about fully nonparametric functionals, for example $\phi^{\rm{cdf}}$, which maps $\mu$ to its cumulative distribution function (or $\phi^{\rm{pdf}}$ if $\mu$ has a Lebesgue density). For a set of distributions $\Qcal$, define $\phi(\Qcal)=\bigcup_{\Qtt \in \Qcal} \phi(\Qtt)$ for short.

We then define a $(1-\alpha)$-confidence sequence for a functional $\phi$ as an adapted sequence of confidence sets $(\CI_t)$ such that
\begin{equation*}
\sup_{\Ptt \in \Pcal} \Ptt\left(\exists t \in \NN: \phi(\Ptt) \notin \CI_t\right) \leq \alpha.
\end{equation*}
This error condition can be phrased equivalently as a coverage criterion:  for every $\Ptt \in \Pbb$, we have \smash{$\Ptt\left(\forall t \in \NN_0: \phi(\Ptt) \in \CI_t\right) \geq 1-\alpha$}. Above, we have suppressed the dependence of $\CI_t$ on $\alpha$ for notational succinctness. Moreover, we emphasize that the probability $\Ptt$ on the outside matches with the inner functional $\phi(\Ptt)$, as it should. Indeed, the coverage probability for $\phi(\Ptt)$ should hold given that the data used to construct $\CI_t$ was drawn according to this $\Ptt$. We then say that \underline{$(\CI_t)$ is a $(\phi,\Pbb,\alpha)$-CS}. If required, we define $\CI_\infty := \liminf_{t\to\infty} \CI_t$. 

Note that even though we defined $(\Qcal,\alpha)$-sequential tests without explicit reference to any functional $\phi$, one could associate $\Qcal$ to the subset of $\Pcal$ in which $\phi$ takes on certain values. In other words, we can absorb the role of any such $\phi$ into the definition of $\Qcal$ to reduce notation.

We say $(\CI_t)$ is \underline{inadmissible} if there exists $(\CI_t')$ that is a $(\phi,\Pbb,\alpha)$-CS such that $\smash{\Ptt(\CI_t' \subseteq \CI_t) = 1}$ for all $t$ and all $\Ptt \in \Pbb$, and $\Ptt(\CI_t'\subsetneq \CI_t) > 0$ for some $t$ and some $\Ptt \in \Pbb$.

Finally, let us mention a minor technical point. A CS $(\CI_t)$ is by definition an \emph{adapted sequence of random sets}. In this paper, we take `adapted' to simply mean that all events of the form $\{z\in\CI_t\}$ belong to $\Fcal_t$. This is sufficient for our needs, and lets us avoid dealing with $\sigma$-algebras on spaces of sets.

Last, as for sequential tests, a family of CSs $\{(\CI_t(\alpha))\}_{\alpha \in [0,1]}$ is said to be `nested in $\alpha$' if $\CI_t(\alpha') \subseteq \CI_t(\alpha)$ for any $t$ and $\alpha' \geq \alpha$. 
\begin{remark} \label{R:200905}
    An analogue of Proposition~\ref{P:200905} also holds for $(\phi,\Pbb,\alpha)$-confidence sequences, where $\Pcal$ is locally dominated. This will later follow from Theorem~\ref{thm:ST/CS-admissibility}, in conjunction with Proposition~\ref{P:200905}.
\end{remark}

\subsection{Reductions between the four instruments}
\label{sec:reductions}

We now demonstrate how to transform one tool for sequential inference into another; some of these are well-known or `obvious' but some are new, especially in the composite setting. These transformations are usually not lossless or bidirectional, meaning that something may be lost in going from one to another and back again. All proofs are relegated to Appendix~\ref{sec:proofs}. 

We start with the simplest direction: forming $(\Qcal,\alpha)$-sequential tests from the other three.
\begin{proposition}\label{prop:construct-seq-tests}
One can construct $(\Qcal,\alpha)$-sequential tests in the following ways:
\begin{enumerate}[label={\rm(\arabic{*})}, ref={\rm(\arabic{*})}]
    \item\label{prop:construct-seq-tests:1} If $(\Pval_t)$ is a $\Qcal$-p-process, then $\{(\1_{\Pval_t \leq \alpha})\}_{\alpha \in [0,1]}$ is a nested family of  $(\Qcal,\alpha)$-STs.
    \item\label{prop:construct-seq-tests:2} If $(\Eval_t)$ is a $\Qcal$-e-process, then $\{(\1_{\Eval_t \geq 1/\alpha})\}_{\alpha \in [0,1]}$ is a nested family of   $(\Qcal,\alpha)$-STs.
    \item\label{prop:construct-seq-tests:3} If $(\CI_t)$ is a $(\phi, \Pcal,\alpha)$-CS, then $(\1_{\phi(\Qcal) \cap \CI_t = \emptyset})$ is a $(\Qcal,\alpha)$-ST.
\end{enumerate}
\end{proposition}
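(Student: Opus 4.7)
The proposition packages three ``obvious'' transformations, so the plan is to handle each of the three items in turn, observing that they all reduce to a one-line argument at a stopping time $\tau$ combined with the appropriate defining property of the source object. In every case the nesting claim is essentially trivial from monotonicity of the indicator map $\alpha \mapsto \1_{\{\cdot \le \alpha\}}$ (or $\alpha\mapsto \1_{\{\cdot \ge 1/\alpha\}}$), so the real work is to verify the type-I error bound $\Qtt(\psi_\tau = 1)\le \alpha$ for an arbitrary stopping time $\tau$ and every $\Qtt\in\Qcal$.

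For item~\ref{prop:construct-seq-tests:1} I would simply unfold the definitions: $\Qtt(\1_{\Pval_\tau \le \alpha} = 1) = \Qtt(\Pval_\tau \le \alpha) \le \alpha$ by $\Qcal$-validity of $(\Pval_t)$ at the stopping time $\tau$. Nesting in $\alpha$ is immediate since $\{\Pval_t \le \alpha\} \subseteq \{\Pval_t \le \alpha'\}$ whenever $\alpha \le \alpha'$. For item~\ref{prop:construct-seq-tests:2} I would apply Markov's inequality to the nonnegative random variable $\Eval_\tau$: $\Qtt(\Eval_\tau \ge 1/\alpha) \le \alpha\, \EE_\Qtt[\Eval_\tau] \le \alpha$, where the last step uses $\Qcal$-safety of $(\Eval_t)$ at $\tau$. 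Nesting again follows because $1/\alpha$ is decreasing in $\alpha$.

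For item~\ref{prop:construct-seq-tests:3}, the one with content worth spelling out, I would first observe the pointwise inclusion
\[
\{\phi(\Qcal)\cap \CI_\tau = \emptyset\} \subseteq \{\phi(\Qtt) \notin \CI_\tau\}
\]
valid for every $\Qtt \in \Qcal$, since $\phi(\Qtt) \in \phi(\Qcal)$. Taking $\Qtt$-probabilities and applying the confidence-sequence coverage guarantee gives $\Qtt(\phi(\Qcal)\cap \CI_\tau = \emptyset) \le \Qtt(\phi(\Qtt)\notin \CI_\tau) \le \alpha$, where the last inequality uses the equivalence between the time-uniform coverage statement and its stopped version as given by Lemma~\ref{lem:composite_equiv_uniform_defns} (applied to the adapted events $\{\phi(\Qtt)\notin \CI_t\}$ under the singleton family $\{\Qtt\}$, together with $\Qtt \in \Qcal \subset \Pcal$).

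The only mild subtlety---and the closest thing to an obstacle---is making sure that $\Qtt$ actually belongs to the class over which $(\CI_t)$ provides coverage: the CS is defined over $\Pcal$ while the test is to be valid over $\Qcal$, and the proposition's statement tacitly requires $\Qcal \subseteq \Pcal$, which matches the convention set up earlier in the paper. Once this is noted, no further measurability or stopping-time machinery is needed beyond Lemma~\ref{lem:composite_equiv_uniform_defns}.
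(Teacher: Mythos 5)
Your proof is correct and follows essentially the same line of reasoning as the paper's: for each item you verify $\Qtt(\psi_\tau = 1)\le\alpha$ at an arbitrary stopping time by unfolding the definition (item 1), applying Markov's inequality (item 2), and using the inclusion $\{\phi(\Qcal)\cap\CI_\tau=\emptyset\}\subseteq\{\phi(\Qtt)\notin\CI_\tau\}$ (item 3). Your explicit appeal to the equivalence lemma to pass from the time-uniform CS definition to the stopped version, and your remark that $\Qcal\subset\Pcal$ is tacitly required, are careful touches the paper leaves implicit but add no substantively different argument.
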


Next, we show how to form a composite p-process from the other three.
\begin{proposition}\label{prop:construct-p-vals}
One can construct p-processes for $\Qcal$ in the following ways:
\begin{enumerate}[label={\rm(\arabic{*})}, ref={\rm(\arabic{*})}]
    \item\label{prop:construct-p-vals:1} If $(\Eval_t)$ is a $\Qcal$-e-process, then $(1 \wedge \inf_{s \leq t} 1/\Eval_s)$ is a $\Qcal$-p-process.
    \item\label{prop:construct-p-vals:2} If $\{(\psi_t(\alpha))\}_{\alpha \in [0,1]}$ is a nested family of $\Qcal$-STs, then $(\inf\{\alpha: \psi_t(\alpha)=1\})$ is a $\Qcal$-p-process.
    \item\label{prop:construct-p-vals:3} If $\{(\CI_t(\alpha))\}_{\alpha \in [0,1]}$ is a nested family of  $(\phi, \Pcal)$-CSs, then $(\inf\{\alpha: \phi(\Qcal) \cap  \CI_t(\alpha) = \emptyset\})$ is a $\Qcal$-p-process.
\end{enumerate}
\end{proposition}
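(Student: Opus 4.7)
The plan for all three parts is: fix $\Qtt \in \Qcal$ and $\alpha \in [0,1]$, show $\Qtt(\exists t \in \NN: \Pval_t \leq \alpha) \leq \alpha$, and invoke Lemma~\ref{lem:composite_equiv_uniform_defns} to upgrade this to validity at every stopping time $\tau$. A recurring technicality is that each construction defines $\Pval_t$ as an infimum, so it is cleaner to first bound $\Qtt(\Pval_\tau < \alpha')$ and then pass to the limit $\alpha' \downarrow \alpha$ to obtain the desired non-strict bound.

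For part~\ref{prop:construct-p-vals:1}, with $\Pval_t := 1 \wedge \inf_{s \leq t} 1/\Eval_s$, the event $\{\exists t: \Pval_t \leq \alpha\}$ coincides with $\{\tau^* < \infty\}$, where $\tau^* := \inf\{s \in \NN: \Eval_s \geq 1/\alpha\}$ (with $\inf \emptyset = \infty$). Since $\tau^*$ is a stopping time, $\Qcal$-safety of $(\Eval_t)$ gives $\EE_{\Qtt}[\Eval_{\tau^*}] \leq 1$. Combining nonnegativity of $\Eval_{\tau^*}$ with the lower bound $\Eval_{\tau^*} \geq (1/\alpha) \1_{\tau^* < \infty}$, Markov's inequality yields $\Qtt(\tau^* < \infty) \leq \alpha$.

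Parts~\ref{prop:construct-p-vals:2} and~\ref{prop:construct-p-vals:3} are bookkeeping built around nesting in $\alpha$. In part~\ref{prop:construct-p-vals:2}, with $\Pval_t := \inf\{\alpha \in [0,1]: \psi_t(\alpha) = 1\}$, nesting gives $\{\Pval_t < \alpha'\} \subseteq \{\psi_t(\alpha') = 1\}$: if $\psi_t(\alpha) = 1$ for some $\alpha \leq \alpha'$, then $\psi_t(\alpha') \geq \psi_t(\alpha) = 1$. The $(\Qcal,\alpha')$-ST property then gives $\Qtt(\Pval_\tau < \alpha') \leq \alpha'$. In part~\ref{prop:construct-p-vals:3}, with $\Pval_t := \inf\{\alpha \in [0,1]: \phi(\Qcal) \cap \CI_t(\alpha) = \emptyset\}$, nesting ($\CI_t(\alpha') \subseteq \CI_t(\alpha)$ for $\alpha' \geq \alpha$) produces the analogous inclusion
\[
\{\Pval_t < \alpha'\} \ \subseteq\ \{\phi(\Qcal) \cap \CI_t(\alpha') = \emptyset\} \ \subseteq\ \{\phi(\Qtt) \notin \CI_t(\alpha')\},
\]
with the last step using $\phi(\Qtt) \in \phi(\Qcal)$ for $\Qtt \in \Qcal$. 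Applying the CS property and the equivalence lemma yields $\Qtt(\exists t: \Pval_t < \alpha') \leq \alpha'$. In both cases, letting $\alpha' \downarrow \alpha$ gives the stated bound.

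The main obstacle is part~\ref{prop:construct-p-vals:1}: without any martingale structure assumed on $(\Eval_t)$ we must convert a stopped-expectation bound into a time-uniform probability bound. The standard first-crossing argument does this precisely because $\Qcal$-safety is allowed to be invoked with a possibly infinite stopping time, which is exactly what $\tau^*$ may be. The other two parts are straightforward once the strict-versus-non-strict infimum subtlety is noted and the correct direction of nesting (larger $\alpha$ $\Rightarrow$ more rejections, smaller CSs) is tracked carefully.
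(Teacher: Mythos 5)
Your proof is correct and follows essentially the same strategy as the paper's, with two small distinctions worth noting.

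For part~\ref{prop:construct-p-vals:1}, the paper applies Markov's inequality directly to $\Eval_\tau$ for a fixed stopping time $\tau$ to get $\Qtt(1/\Eval_\tau\le\alpha)\le\alpha\,\EE_\Qtt[\Eval_\tau]\le\alpha$, and then invokes closure under running infima (Remark~\ref{rem:running-extremum}, which is Lemma~\ref{lem:composite_equiv_uniform_defns} under the hood) to pass to $(1\wedge\inf_{s\le t}1/\Eval_s)$. You instead argue directly via the first-crossing time $\tau^*=\inf\{s:\Eval_s\ge 1/\alpha\}$, which is a self-contained proof of Ville's inequality for $e$-values. Both routes rest on the same fact---that $\Qcal$-safety applies at a possibly infinite stopping time---and are entirely equivalent in content.

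For parts~\ref{prop:construct-p-vals:2} and~\ref{prop:construct-p-vals:3}, your handling is in fact slightly more careful than the paper's. The paper asserts the set equality $\{\Pval_\tau>\alpha\}=\{\psi_\tau(\alpha)=0\}$, but because $\Pval_\tau$ is defined as an infimum, only the inclusions $\{\Pval_\tau>\alpha\}\subseteq\{\psi_\tau(\alpha)=0\}\subseteq\{\Pval_\tau\ge\alpha\}$ hold in general; the boundary event $\{\Pval_\tau=\alpha,\ \psi_\tau(\alpha)=0\}$ can have positive probability unless one assumes right-continuity of $\alpha\mapsto\psi_\tau(\alpha)$. Your strict-inequality bound $\Qtt(\Pval_\tau<\alpha')\le\alpha'$ followed by the $\alpha'\downarrow\alpha$ limit resolves this cleanly and gives the required non-strict bound $\Qtt(\Pval_\tau\le\alpha)\le\alpha$ without any additional assumption. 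The same observation applies verbatim to part~\ref{prop:construct-p-vals:3}. This is a genuine (if minor) improvement in rigor over the paper's proof.
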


A confidence sequence can be formed by inverting families of tests, as follows.

\begin{proposition}\label{prop:construct-CS}
Recall that $\Pcal^\gamma := \{\Ptt \in \Pcal: \phi(\Ptt) = \gamma\}$.
If $(\psi^\gamma_t)$ is a $(\Pcal^\gamma,\alpha)$-sequential test for each $\gamma \in \Zcal$ then $\{\gamma \in \Zcal: \psi_t^\gamma=0\}$ is a $(\phi, \Pcal,\alpha)$-confidence sequence. 
To convert a family of p-processes or e-processes into confidence sequences, we can first convert them to sequential tests using Proposition~\ref{prop:construct-seq-tests} and then invert those tests as in the previous sentence.  
\end{proposition}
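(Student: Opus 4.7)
The plan is to prove the first claim directly from the composite equivalence lemma (Lemma~\ref{lem:composite_equiv_uniform_defns}), and then derive the second claim as an immediate corollary of Proposition~\ref{prop:construct-seq-tests}. The proof is essentially the standard duality between tests and confidence sets, but carried out carefully in the composite setting via the partition $\{\Pcal^\gamma\}_{\gamma\in\Zcal}$.

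First I would fix an arbitrary $\Ptt \in \Pcal$ and set $\gamma^* := \phi(\Ptt)$, so by definition $\Ptt \in \Pcal^{\gamma^*}$. The key observation is that the miscoverage event decomposes as
\begin{equation*}
\{\exists\, t \in \NN: \phi(\Ptt) \notin \CI_t\} \;=\; \{\exists\, t \in \NN: \gamma^* \notin \CI_t\} \;=\; \{\exists\, t \in \NN: \psi^{\gamma^*}_t = 1\},
\end{equation*}
where the last equality uses the construction $\CI_t := \{\gamma \in \Zcal : \psi^\gamma_t = 0\}$.

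Next, by hypothesis $(\psi^{\gamma^*}_t)$ is a $(\Pcal^{\gamma^*},\alpha)$-sequential test, meaning that $\sup_\tau \sup_{\Qtt \in \Pcal^{\gamma^*}} \Qtt(\psi^{\gamma^*}_\tau = 1) \leq \alpha$. Applying Lemma~\ref{lem:composite_equiv_uniform_defns} to the adapted events $A_t := \{\psi^{\gamma^*}_t = 1\}$ and the family $\Pcal^{\gamma^*}$, the equivalence of \ref{L2:1} and \ref{L2:3} yields $\sup_{\Qtt \in \Pcal^{\gamma^*}} \Qtt(\bigcup_{t \in \NN} A_t) \leq \alpha$. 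Since $\Ptt \in \Pcal^{\gamma^*}$, we in particular obtain $\Ptt(\exists\, t : \phi(\Ptt) \notin \CI_t) \leq \alpha$. Taking the supremum over $\Ptt \in \Pcal$ gives the required CS property. Adaptedness of $(\CI_t)$ in the sense defined in the paper is immediate: for each $z \in \Zcal$, the event $\{z \in \CI_t\} = \{\psi^z_t = 0\}$ lies in $\Fcal_t$ because $\psi^z_t$ is $\Fcal_t$-measurable.

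For the second part, given a $\Qcal$-valid $p$-value $(\Pval^\gamma_t)$ for the null $\Qcal = \Pcal^\gamma$ (respectively a $\Pcal^\gamma$-safe $e$-value $(\Eval^\gamma_t)$), Proposition~\ref{prop:construct-seq-tests}\ref{prop:construct-seq-tests:1} (respectively \ref{prop:construct-seq-tests:2}) produces a $(\Pcal^\gamma,\alpha)$-sequential test $(\psi^\gamma_t)$; inverting via the first part completes the construction. I do not anticipate any genuine obstacle here: everything is bookkeeping once the composite equivalence lemma is invoked on the slice $\Pcal^{\gamma^*}$ corresponding to the data-generating measure. The only subtlety worth flagging is that one must test each $\gamma$ against the slice $\Pcal^\gamma$ (not against all of $\Pcal$), so that when applying the lemma the chosen $\Ptt$ actually belongs to the null family under consideration.
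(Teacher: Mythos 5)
Your proposal is correct and follows essentially the same route as the paper: fix $\Ptt\in\Pcal$, identify the slice $\Pcal^{\gamma^*}$ with $\gamma^*=\phi(\Ptt)$, and transfer the type-I error guarantee of $(\psi^{\gamma^*}_t)$ to the miscoverage event of $(\CI_t)$. The only cosmetic difference is that the paper argues directly with an arbitrary stopping time $\tau$ via $\Ptt(\phi(\Ptt)\notin\CI_\tau)=\Ptt(\psi^{\gamma^*}_\tau=1)\le\alpha$, whereas you phrase it through the union-over-time event and invoke Lemma~\ref{lem:composite_equiv_uniform_defns} explicitly; these are interchangeable by that very lemma.
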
 

Finally, we can form e-processes by calibrating p-processes, following~\citet{shafer_test_2011}.
\begin{proposition}\label{prop:construct-e-vals}
One can construct e-processes for $\Qcal$ as follows:
If $(\Pval_t)$ is a $\Qcal$-p-process, then $1/(2\sqrt{\Pval_t})$ is a $\Qcal$-e-process. In fact, $(f(\Pval_t))$ is a $\Qcal$-e-process for any nonincreasing `calibration' function $f:[0,1]\to[0,\infty)$ such that $\int_0^1 f(u)\dd u=1$.
To convert a nested family of sequential tests or confidence sequences into e-processes, we can first convert them to p-processes using Proposition~\ref{prop:construct-p-vals} and then apply the aforementioned calibration.
\end{proposition}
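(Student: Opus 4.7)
The plan is to prove the general claim first---namely that $(f(\Pval_t))$ is $\Qcal$-safe for any nonincreasing $f \colon [0,1] \to [0,\infty)$ with $\int_0^1 f(u) \dd u = 1$---and then note that the specific statement about $1/(2\sqrt{\Pval_t})$ is just the special case $f(u) = 1/(2\sqrt{u})$, for which nonincrease is obvious and $\int_0^1 (2\sqrt{u})^{-1} \dd u = 1$ is a one-line computation. The last sentence of the proposition, which combines sequential tests or confidence sequences with the calibration via an intermediate $p$-value, is just a composition of Propositions~\ref{prop:construct-p-vals} and the first statement, so no separate argument is needed.

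Fix a stopping time $\tau$ (possibly infinite) and $\Qtt \in \Qcal$; the goal is to show $\EE_\Qtt[f(\Pval_\tau)] \le 1$. The central tool will be the layer-cake identity
\[
\EE_\Qtt[f(\Pval_\tau)] \;=\; \int_0^\infty \Qtt\bigl(f(\Pval_\tau) \ge y\bigr) \dd y,
\]
together with the generalized inverse $g(y) := \sup\{u \in [0,1] : f(u) \ge y\}$ (with $\sup\emptyset := 0$). Because $f$ is nonincreasing, $\{f(\Pval_\tau) \ge y\} \subseteq \{\Pval_\tau \le g(y)\}$, and the defining validity property of $(\Pval_t)$ applied at $\tau$ gives $\Qtt(\Pval_\tau \le g(y)) \le g(y)$ for each $y > 0$. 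This reduces everything to showing $\int_0^\infty g(y) \dd y = 1$.

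For the latter, I would apply Fubini/Tonelli to write
\[
1 \;=\; \int_0^1 f(u) \dd u \;=\; \int_0^1 \!\!\int_0^{f(u)} \!\dd y \, \dd u \;=\; \int_0^\infty \bigl| \{u \in [0,1] : f(u) > y\} \bigr| \dd y,
\]
and then observe that because $f$ is nonincreasing, $\{u : f(u) > y\}$ is an interval of the form $[0, \widetilde g(y))$ with $\widetilde g(y) := \sup\{u : f(u) > y\}$; hence its Lebesgue measure equals $\widetilde g(y)$. Finally $\widetilde g$ and $g$ agree except on the at-most-countable set of $y$-values at which $f$ is constant on a positive-measure interval, so they have the same Lebesgue integral, giving $\int_0^\infty g(y)\dd y = 1$ as needed.

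The only technical subtlety is the comparison of $g$ and $\widetilde g$ and the associated measurability of the set $\{f(\Pval_\tau) \ge y\}$; both are immediate from the monotonicity of $f$, since a bounded monotone function has at most countably many jumps and hence at most countably many level-sets of positive length. The step I expect to require the most care in writing is simply stating the $p$-value inequality $\Qtt(\Pval_\tau \le g(y)) \le g(y)$ when $g(y) \in \{0,1\}$, so that the argument is not vacuous at boundary values of $y$; these cases are handled by noting $\Qtt(\Pval_\tau \le 0) \le 0$ and $\Qtt(\Pval_\tau \le 1) \le 1$ directly. Everything else is routine.
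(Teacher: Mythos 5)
Your proof is correct and follows essentially the same route as the paper: both reduce the claim to the fact that validity at the stopping time $\tau$ makes $\Pval_\tau$ stochastically larger than a uniform variable, so that $\EE_\Qtt[f(\Pval_\tau)] \le \int_0^1 f(u)\,\dd u = 1$ for nonincreasing $f$. The only difference is that the paper invokes this stochastic-dominance comparison as a known fact, whereas you prove it from scratch via the layer-cake identity, the generalized inverse $g$, and Tonelli; that extra detail is correct but not a different argument.
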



We end this section with some basic closure properties.


\begin{proposition}\label{P:convex_hull}
Let $\rm{conv}(\Qcal)$ denote the convex hull of $\Qcal$. Then the following statements hold.
\begin{enumerate}[label={\rm(\arabic{*})}, ref={\rm(\arabic{*})}]
\item\label{P:convex_hull:1} An e-process for $\Qcal$ is also automatically an e-process for $\rm{conv}(\Qcal)$. Such closure under the convex hull also holds for p-processes and sequential tests.
\item\label{P:convex_hull:2}  An admissible e-process for $\Qcal$ is also automatically an admissible e-process for $\rm{conv}(\Qcal)$. Such closure under the convex hull also holds for p-processes and sequential tests.
\end{enumerate}
\end{proposition}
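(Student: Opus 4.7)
The plan is to prove both parts by exploiting linearity of expectation and of probability in the underlying measure. For part~\ref{P:convex_hull:1}, I would fix a $\Qcal$-safe $(\Eval_t)$ and any $\Qtt' \in \mathrm{conv}(\Qcal)$, so that $\Qtt' = \sum_i \lambda_i \Qtt_i$ with $\Qtt_i \in \Qcal$, $\lambda_i \geq 0$, and $\sum_i \lambda_i = 1$. For any stopping time $\tau$, linearity of expectation gives
\[
\EE_{\Qtt'}[\Eval_\tau] \;=\; \sum_i \lambda_i \, \EE_{\Qtt_i}[\Eval_\tau] \;\leq\; \sum_i \lambda_i \;=\; 1,
\]
so $(\Eval_t)$ is $\mathrm{conv}(\Qcal)$-safe. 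The identical computation with $\Qtt'(\Pval_\tau \leq a)$ or $\Qtt'(\psi_\tau = 1)$ in place of $\EE_{\Qtt'}[\Eval_\tau]$ handles $p$-values and sequential tests, since both conditions reduce to an inequality on probabilities and probability is linear in the measure. If $\mathrm{conv}(\Qcal)$ is allowed to include general (possibly uncountable) mixtures $\Qtt' = \int \Qtt\, \nu(\dd \Qtt)$, the same argument goes through after invoking Fubini--Tonelli to interchange the outer integral over $\nu$ with the integral defining the expectation (resp.\ probability).

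For part~\ref{P:convex_hull:2}, I would argue by contrapositive: if $(\Eval_t)$ is inadmissible for $\mathrm{conv}(\Qcal)$, then it is already inadmissible for $\Qcal$. Let $(\Eval'_t)$ witness inadmissibility over the hull, so $(\Eval'_t)$ is $\mathrm{conv}(\Qcal)$-safe, $\Qtt'(\Eval'_t \geq \Eval_t) = 1$ for every $t$ and every $\Qtt' \in \mathrm{conv}(\Qcal)$, and $\Qtt'(\Eval'_t > \Eval_t) > 0$ at some $t$ for some $\Qtt' \in \mathrm{conv}(\Qcal)$. Since $\Qcal \subseteq \mathrm{conv}(\Qcal)$, safety and the almost-sure inequality transfer to every $\Qtt \in \Qcal$ by restriction. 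For the strict inequality, decompose the witness as $\Qtt' = \sum_i \lambda_i \Qtt_i$; then
\[
0 \;<\; \Qtt'(\Eval'_t > \Eval_t) \;=\; \sum_i \lambda_i \, \Qtt_i(\Eval'_t > \Eval_t),
\]
so at least one $\Qtt_i \in \Qcal$ satisfies $\Qtt_i(\Eval'_t > \Eval_t) > 0$, exhibiting inadmissibility of $(\Eval_t)$ over $\Qcal$. An identical decomposition handles $p$-values (using the event $\{\Pval'_t < \Pval_t\}$) and sequential tests (using $\{\psi'_t > \psi_t\}$), with the uncountable-mixture case once again handled by Fubini.

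I do not expect a serious obstacle: the argument is essentially a one-line application of linearity in each direction. The only points of care are to check both halves of the domination condition (the almost-sure inequality, which transfers trivially to $\Qcal \subseteq \mathrm{conv}(\Qcal)$, and the strict inequality at some point, which survives mixing because a convex combination of non-negative numbers is strictly positive only if at least one summand is) and to invoke Fubini cleanly if $\mathrm{conv}(\Qcal)$ is taken to include uncountable mixtures.
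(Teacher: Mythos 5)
Your proposal is correct and matches the paper's own argument: part (1) is linearity of expectation (resp.\ probability) in the mixing measure, and part (2) uses the fact that a convex combination of nonnegative terms is positive only if some summand is, so a witness to inadmissibility over $\mathrm{conv}(\Qcal)$ yields one over $\Qcal$ (the paper writes the mixture as a two-point combination $\lambda\Qtt_1+(1-\lambda)\Qtt_2$, but this is the same idea). Your additional remarks on general mixtures via Fubini and on the transfer of safety to the subset $\Qcal$ are fine but do not change the substance.
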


The proof can be found in Appendix~\ref{sec:proofs}. Example~\ref{eg:conv-closure-CS} in Appendix~\ref{sec:more-examples} shows that the statement of Proposition~\ref{P:convex_hull} does not extend to confidence sequences.

\begin{remark}\label{rem:running-extremum}
Note that stability with respect to taking a running extremum does not hold for e-processes, but does hold for the other three. To elaborate, if $(\Pval_t)$ is a $\Qcal$-p-process, then so is $(\min_{s \leq t} \Pval_s)$; if $(\CI_t)$ is a $(\phi,\Pcal,\alpha)$-CS, then so is $(\bigcap_{s \leq t} \CI_s)$; if $(\psi_t)$ is $(\Qcal,\alpha)$-ST, then so is $(\max_{s \leq t} \psi_s)$. However, a running maximum does not usually preserve safety for e-processes!
Moreover, if $(\Eval_t)$ is a $\Pcal$-e-process and $(\mathfrak f_t)$ is a $\Qcal$-e-process, then $\min(\Eval_t,\mathfrak f_t)$ is a $\rm{conv}(\Pcal \cup \Qcal)$-e-process, while $(\Eval_t+\mathfrak f_t)/2$ is a $\rm{conv}(\Pcal \cap \Qcal)$-e-process. 
Similarly, if $(\Pval_t)$ is a $\Pcal$-p-process and $(\mathfrak q_t)$ is a $\Qcal$-p-process, then $\max(\Pval_t, \mathfrak q_t)$ is a $\rm{conv}(\Pcal \cup \Qcal)$-p-process, while $2\min(\Pval_t,\mathfrak q_t)$ is a $\rm{conv}(\Pcal \cap \Qcal)$-p-process.
Last, if $(\CI_t)$ is a $(\phi,\Pcal,\alpha)$-CS and $(\mathfrak D_t)$ is a $(\phi,\Qcal,\alpha)$-CS, then $\CI_t \cup \mathfrak D_t$ is a $(\phi,\Pcal \cup \Qcal,\alpha)$-CS, while $\CI_t \cap \mathfrak D_t$ is a $(\phi,\Pcal \cap \Qcal,2\alpha)$-CS. 
\end{remark}


\section{Three instructive examples of exponential (super)martingales}
\label{sec:two-examples}

We now illustrate the above concepts with three examples: the Gaussian NM, a subGaussian NSM and the symmetric NSM, the former being a simple parametric example, and the latter two being sophisticated nonparametric examples (we use the term sophisticated because it includes atomic and nonatomic distributions, and there is no underlying reference measure).

\subsection{A Gaussian nonnegative martingale}
\label{sec:gaussian}

If $(X_t)$ is a sequence of i.i.d.\ standard Gaussians, then it is well known that $\exp(\sum_{s\leq t}X_s - t/2)$ is a nonnegative martingale. This is one of the simplest nontrivial pointwise NMs that one can construct. Below, we elaborate on how to construct a simple composite NM in the Gaussian case. We generalize the Gaussian case to an arbitrary fixed mean $m$ and a variance process, where each $\sigma_t^2$ is revealed to us on demand. Technically, we
%
suppose the process $(X_t)$ consists of two components, $X_t = (Y_t,\sigma_{t+1}^2)$, and let $\sigma_1^2$ be a constant. Note that $(\sigma_t^2)$ is a predictable sequence. We now let $\Gcal^m$ denote the set of distributions such that the outcome $Y_t$ at time $t$ is conditionally Gaussian with deterministic mean $m$ and  variance $\sigma_t^2$, that is,
%
\[
\Gcal^m := \{\Ptt: Y_t ~|~ \Fcal_{t-1} \sim \Ncal(m,\sigma_t^2) \text{ for all $t \in \NN$} \}.
\]
Since the distribution generating the predictable variances is left unspecified, $\Gcal^m$ is a highly composite set of measures.
Next, define the process $(G_t^m)$ by
\begin{equation}\label{eq:Gaussian-NM}
G_t^m := \exp\left(\sum_{s \leq t} (Y_s-m) - \frac{1}{2} \sum_{s \leq t} \sigma_s^2
\right).
\end{equation}
It is easy to check that $(G_t^m)$ is a $\Gcal^m$-NM for each $m \in \RR$, specifically by evaluating the moment generating function of $X_t$.
As a direct consequence of Ville's inequality, a $(1-\alpha)$-CS $(\CI^{\rm{mean}}_t)$ for an unknown mean is given by
\[
\CI^{\rm{mean}}_t := \left\{ m \in \RR: G_t^m < \frac{1}{\alpha} \right\}. 
\]
Formally, $(\CI^{\text{mean}}_t)$ is a $(\phi^{\text{mean}}, \bigcup_{m \in\RR} \Gcal^m,\alpha)$-CS for the mean, where we define the mean functional $\phi^{\rm{mean}}(\Ptt)=m$ for every $\Ptt \in \Gcal^m$.

\begin{remark}
The confidence sequence $(\CI^{\rm{mean}}_t)$ derived above does not yield interval \eqref{eq:normal-mixture-CS} in the introduction. That CS is obtained by noting that $(G_t^m(\lambda))$, given by
\[
    G_t^m(\lambda) := \exp\left(\lambda \sum_{s\leq t} (Y_s-m) - \frac{\lambda^2}{2} \sum_{s\leq t} \sigma_s^2\right),
\]
is a $\Gcal^m$-NM for every $\lambda \in \RR$. Fubini's theorem then implies that $(\int G_t^m(\lambda) \dd \Phi(\lambda))$ is also a $\Gcal^m$-NM for any distribution function $\Phi$. Choosing $\Phi$ as a standard Gaussian for example, yields the normal mixture martingale (dating back at least to Darling and Robbins~\cite{darling_confidence_1967}). Applying Ville's inequality then yields~\eqref{eq:normal-mixture-CS}.
\end{remark}

{\color{black}
\subsection{A subGaussian supermartingale and e-process}
\label{sec:subgaussian-defn}

For a positive predictable process $(\sigma_t)$, we say that $Y_t$ is conditionally $\sigma_t$-subGaussian, if after centering by its conditional mean $\mu_t := \EE_{\Ptt}[Y_t | \Fcal_{t-1}]$ (assumed to exist), its conditional moment generating function is upper bounded by that of a Gaussian with variance $\sigma_t^2$; let $\Gcal^\downarrow$ consist of all such distributions:
\[
\Gcal^\downarrow := \{\Ptt: \EE_\Ptt [\exp(\lambda (Y_t-\mu_t)) ~|~ \Fcal_{t-1}] \leq \exp(\lambda^2\sigma_t^2/2) \text{ for all $t \in \NN$, $\lambda \in \RR$} \}.
\]
Then, one can check that the process
\begin{equation}\label{eq:subG-NSM}
M^\Ptt_t := \exp\Big(\lambda \sum_{i \leq t} (Y_i-\mu_i) - \lambda^2\sum_{i \leq t}\sigma_i^2/2\Big)
\end{equation}
is a $\Ptt$-NSM for each $\Ptt \in \Gcal^\downarrow$. Note that the process $M^\Ptt_t$ changes with the conditional means of $\Ptt$, and thus it depends on $\Ptt$, and so $M^\Ptt_t$ is not a $\Gcal^\downarrow$-NSM.

Now, suppose we want to test the null hypothesis that $\mu_t \leq 0$ for all $t$, assuming that $\sigma_t \equiv \sigma$, a constant, for simplicity. Consider three subsets of $\Gcal^\downarrow$:
\begin{align}
\Gcal_{0}^\downarrow &:= \{\Ptt: \mu_t = 0 \text{ and } \EE_\Ptt [\exp(\lambda Y_t) ~|~ \Fcal_{t-1}] \leq \exp(\lambda^2\sigma^2/2) \text{ for all $t \in \NN$, $\lambda \in \RR$} \}.
\\
\Gcal_{-}^\downarrow &:= \{\Ptt: \mu_t \leq 0 \text{ and } \EE_\Ptt [\exp(\lambda (Y_t-\mu_t)) ~|~ \Fcal_{t-1}] \leq \exp(\lambda^2\sigma^2/2) \text{ for all $t \in \NN$, $\lambda \in \RR$} \}.
\\
\Gcal_{\sim}^\downarrow &:= \{\Ptt: \sum_{i\leq t}\mu_i \leq 0 \text{ and } \EE_\Ptt [\exp(\lambda (Y_t-\mu_t)) ~|~ \Fcal_{t-1}] \leq \exp(\lambda^2\sigma^2/2) \text{ for all $t \in \NN$, $\lambda \in \RR$} \}.
\end{align}
Now consider the process
\begin{equation}\label{eq:subG-NSM2}
N_t := \exp\left(\lambda \sum_{i \leq t} Y_i - \lambda^2\sigma^2 t/2\right).
\end{equation}
Then, we have the following three properties:
\begin{enumerate}
    \item $N$ is a $\Gcal_{0}^\downarrow$-NSM for any $\lambda \in \RR$.
    \item $N$ is a $\Gcal_{-}^\downarrow$-NSM for any $\lambda \geq 0$.
    \item  $N$ is an $\Gcal_{\sim}^\downarrow$-e-process for any $\lambda \geq 0$.
\end{enumerate}
Importantly, note that in the third case, $N$ is not a $\Gcal_{\sim}^\downarrow$-NSM. Indeed, the weak null allows $\mu_t$ sequences like $(-1,-1,1,0,-1,\dots)$, and the NSM property would be violated at the third step. In fact, one can prove (but we omit the proof) that there are $\Gcal_{\sim}^\downarrow$-NSMs (except for constant or decreasing processes).
However, the e-process property follows thanks to Lemma~\ref{L:e-process_definitions}\ref{L:e-process_definitions.6}; this is because for positive $\lambda$ and for each $\Ptt \in \Gcal^\downarrow_{\sim}$, we have $N \leq M^\Ptt_t$. 

In Section~\ref{sec:subG}, we will prove that $N$ is actually an \emph{admissible} e-process for all three classes above, which turns out to be a rather nontrivial fact for reasons we elaborate on later. 
}

\subsection{A nonnegative supermartingale under symmetry}
\label{sec:symmetric-NSM}

We now move to a nonparametric example that first appears in 
\citet{de_la_pena_general_1999} but the core idea can be traced back to 
\citet{efron_students_1969}, who was interested in the robustness of the $t$-test to heavy tailed (but symmetric) distributions; further, it has recently been extended to the matrix setting by \citet{howard_exponential_2018}.
This example is particularly interesting because of three reasons: (a) it deals with a nonparametric class of distributions that does not have a common dominating measure (it includes atomic and non-atomic measures), (b) there exists a rather elegant well-known `nonparametric' NSM, meaning that one single process is a composite NSM, (c) the visual form of the NSM below is reminiscent of the aforementioned Gaussian example, making it intuitively appealing. Unfortunately, we will later demonstrate that this construction is suboptimal, and leads to inadmissible sequential inference.

Consider the convex set of distributions indexed by $m \in \RR$, where each increment is conditionally symmetric around $m$, i.e.,
\begin{equation}\label{eq:symmetric-distributions}
\Scal^m := \{\Ptt: (X_t-m) \sim -(X_t-m) ~|~ \Fcal_{t-1}  \}.
\end{equation}
Consider then the following family of processes $(S_t^m)$, indexed by $m\in\RR$:
\begin{equation}\label{eq:symmetric-NSM}
S_t^m := \exp\left(\sum_{s\leq t} (X_s-m) - \frac{1}{2} \sum_{s\leq t} (X_s-m)^2 \right).
\end{equation}
It is known~\cite{de_la_pena_general_1999} that $(S_t^m)$ is an $\Scal^m$-NSM for each fixed $m \in \RR$; the proof stems from the observation that $\cosh(z) \leq \exp(z^2/2)$, and thus for a single symmetric random variable $Z$, we have
\[
\EE[e^{Z - Z^2/2}] = \EE[e^{-Z -Z^2/2}] = \EE\left[\frac{e^{Z -Z^2/2} + e^{-Z - Z^2/2}}{2}\right] = \EE[e^{-Z^2/2} \cosh(Z)] \leq 1.
\]
Note that the symmetric distribution could be different at each time point (e.g., Gaussian with mean $m$ at time one, $(\delta_{m-|X_1|} + \delta_{m+|X_1|})/2$ at time two, etc., where $\delta_z$ denotes the Dirac measure at some $z \in \mathbb{R}$).

The process $(S^m_t)$ is visually quite similar to the Gaussian process $(G^m_t)$ from the previous subsection.
The relaxation from using the true variance in $(G_t^m)$ to using an empirical variance in $(S_t^m)$, allows the NM property to transform to an NSM property for a much larger class of heavy-tailed distributions (such as $t$ and Cauchy distributions). Even when applied to Gaussians, one no longer needs to know the variance.

One can check using Propositions~\ref{prop:construct-seq-tests} and~\ref{prop:construct-CS} that the sets
\[
\CI^{\rm{center}}_t := \left\{ m \in \RR: S_t^m < \frac{1}{\alpha}\right \} 
\]
together form a $(\phi^{\rm{center}},  \bigcup_{m \in\RR} \Scal^m,\alpha)$-CS for the center of symmetry. 
Said differently, if $\Ptt \in \Scal^m$ then $\Ptt(\exists t\in \NN: m \notin \CI^{\text{center}}_t) \leq \alpha$.

Above, we have described only confidence sequences, but one could have defined e-processes and p-processes. For example, to test the null $H_0: \Ptt \in \Scal^0$, one can use the fact that $(S_t^0)$ is an NSM under the null, and thus $(S_t^0)$ is an e-process for $\Scal^0$, and $(\inf_{s\leq t}1/S_s^0)$ is a p-process for $\Scal^0$. Of course, all of these in turn define $(\Scal^0,\alpha)$-sequential tests.

We return to these and other examples later in the paper.

\section{Admissible inference for point nulls via martingales}\label{sec:admissible-pointwise}

We begin our examination of admissibility via the lens of testing (p-processes, e-processes, and sequential tests) and leave the results on estimation (confidence sequences) for Subsection~\ref{subsec:CS}.

\begin{theorem}[Necessary and sufficient conditions for pointwise admissibility]\label{thm:pointwise}
Consider a point null $\Qcal=\{\Qtt\}$. The following statements describe necessary and sufficient conditions for pointwise admissibility.
\begin{enumerate}[label={\rm(\arabic{*})}, ref={\rm(\arabic{*})}] 
    \item\label{thm:pointwise:1} 
    If $(\Pval_t)$ is admissible, then it is a closed $\Qtt$-MM with $F(\inf_{t \in \NN_0} \Pval_t) = \inf_{t \in \NN_0} \Pval_t$, where $F$ is the distribution function of $\inf_{t \in \NN_0} \Pval_t$. In the other direction, if $(\Pval_t)$ is a closed $\Qtt$-MM and $\inf_{t \in \NN_0} \Pval_t$ is $\Qtt$-uniformly distributed, then it is admissible. 
    \item\label{thm:pointwise:2} 
    $(\Eval_t)$ is admissible if and only if it is a $\Qtt$-NM with $\EE_{\Qtt}[\Eval_0] = 1$.
    \item\label{thm:pointwise:3} $(\psi_t)$ is an admissible $(\Qtt,\alpha)$-sequential test if and only if $\psi_t := \1_{\sup_{s\leq t}M_s \geq 1/\alpha}$, where $(M_t)$ is a $\Qtt$-NM with no overshoot at $1/\alpha$ and $M_\infty \in \{0,1/\alpha\}$.
\end{enumerate}
\end{theorem}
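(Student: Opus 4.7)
My plan is to recover the martingale $(M_t)$ from an admissible $(\psi_t)$ as the (scaled) Doob--L\'evy martingale of the terminal rejection indicator $\psi_\infty$, then collapse $(\psi_t)$ onto the martingale-based candidate $\psi^*_t := \1_{\sup_{s\le t}M_s \ge 1/\alpha}$ using admissibility, which also yields the no-overshoot property as a byproduct.

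\emph{Forward direction.} Assume $(\psi_t)$ is admissible. Running maxima and a standard randomization argument reduce to the case in which $(\psi_t)$ is nondecreasing with $\psi_\infty := \lim_{t\to\infty}\psi_t \in \{0,1\}$ and $\Qtt(\psi_\infty = 1) = \alpha$ (otherwise the test $\psi_t \vee \1_{V \le \alpha - \Qtt(\psi_\infty=1)}$, for a fresh uniform $V$, strictly dominates). Define
\[
M_t := \tfrac{1}{\alpha}\,\EE_\Qtt[\psi_\infty \mid \Fcal_t], \qquad t \in \NN_0,
\]
a uniformly integrable $\Qtt$-NM with $M_\infty = \psi_\infty/\alpha \in \{0,1/\alpha\}$, $M_t \le 1/\alpha$, and $\EE_\Qtt[M_0] = 1$. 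Ville's inequality \eqref{eq:Ville} makes $\psi^*_t := \1_{\sup_{s\le t} M_s \ge 1/\alpha}$ a $(\Qtt,\alpha)$-ST; on $\{\psi_t = 1\}$, monotonicity gives $\psi_\infty = 1$ and hence $M_t = 1/\alpha$, so $\psi^* \ge \psi$. Admissibility then yields $\psi^* = \psi$ almost surely, giving the representation. Finally, at $\rho := \inf\{t:M_t \ge 1/\alpha\}$, the identity $\psi_\rho = \psi^*_\rho = 1$ on $\{\rho < \infty\}$ yields $\Qtt(\psi_\infty = 1 \mid \Fcal_\rho) = 1$ and hence $M_\rho = 1/\alpha$, establishing no overshoot.

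\emph{Reverse direction.} Fix $(M_t)$ with the stated properties and the implicit normalizations $\EE_\Qtt[M_0]=1$ and $\Qtt(\sup_t M_t \ge 1/\alpha) = \alpha$ (both necessary for admissibility: without the first, $M/\EE_\Qtt[M_0]$ dominates the induced test; without the second, external randomization does). Closedness gives $M_t = \EE_\Qtt[M_\infty \mid \Fcal_t] \le 1/\alpha$, and Ville's inequality makes $\psi_t := \1_{\sup_{s\le t} M_s \ge 1/\alpha}$ a $(\Qtt,\alpha)$-ST. For admissibility, let $\psi' \ge \psi$ be any dominating $(\Qtt,\alpha)$-ST, WLOG nondecreasing, with $\tau := \inf\{t:\psi_t=1\}$ and $\tau' := \inf\{t:\psi'_t=1\}$; then $\tau' \le \tau$, and the chain $\Qtt(\tau' < \infty) \le \alpha = \Qtt(\tau < \infty) \le \Qtt(\tau' < \infty)$ forces the two events to coincide up to null sets. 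On the common event $\{\tau' < \infty\}$ we have $\psi_\infty = 1$ almost surely, hence $\Qtt(M_\infty = 1/\alpha \mid \Fcal_{\tau'}) = 1$; UI optional stopping delivers $M_{\tau'} = 1/\alpha$, and no overshoot then forces $\tau \le \tau'$, so $\tau = \tau'$ and $\psi = \psi'$.

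\emph{Main obstacle.} The main technical subtlety is pinning down the implicit normalization -- $\EE_\Qtt[M_0] = 1$ and $\Qtt(\sup_t M_t \ge 1/\alpha) = \alpha$ -- without which the reverse direction fails (for instance, there exist discrete-time $\Qtt$-NMs with no overshoot and $M_\infty \in \{0,1/\alpha\}$ that approach $1/\alpha$ without ever hitting it, giving a trivially inadmissible $\psi \equiv 0$). Once these are in place, the reverse direction closes through the chain closedness~$\rightsquigarrow$~UI optional stopping at $\tau'$~$\rightsquigarrow$~$M_{\tau'}=1/\alpha$~$\rightsquigarrow$~no overshoot forces $\tau \le \tau'$; the remaining tools (Doob--L\'evy closure, UI optional stopping, Ville's inequality) are already assembled in Section~\ref{sec:Ville}.
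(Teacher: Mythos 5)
Your proposal addresses only part~\ref{thm:pointwise:3} of the theorem; parts~\ref{thm:pointwise:1} and \ref{thm:pointwise:2} are left entirely unproved, and they are not corollaries of part~\ref{thm:pointwise:3}. The paper's argument for part~\ref{thm:pointwise:1} requires the max-martingale machinery from Subsection~\ref{SS:MM} (in particular the Doob--L\'evy MM $\Pval_t' := \bigvee_\Qtt[F(\bar\Pval)\mid\Fcal_t]$ and the characterization of closedness), and the argument for part~\ref{thm:pointwise:2} proceeds by constructing the Snell envelope $L_t := \esssup_{\tau\ge t}\EE_\Qtt[\Eval_\tau\mid\Fcal_t]$, verifying it is a supermartingale via Proposition~\ref{P_esssup_closed_max}, and then using its Doob decomposition to extract the dominating martingale. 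These are different tools from anything in your proposal, so as a proof of the theorem the submission is substantially incomplete.

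For part~\ref{thm:pointwise:3}, your route is essentially the paper's. Forward: both proofs show $\Qtt(\psi_\infty=1)=\alpha$ by external randomization, pass to the Doob--L\'evy martingale $M_t = \alpha^{-1}\Qtt(\psi_\infty=1\mid\Fcal_t)$, note it is bounded by $1/\alpha$ so that $\{M_t\ge 1/\alpha\}=\{\sup_{s\le t}M_s\ge 1/\alpha\}$, invoke Ville, and collapse by admissibility. Reverse: both apply Ville and then rule out $\psi'_{t^*}=1$ off $\{\tau\le t^*\}$ using boundedness of the stopped martingale $M^\tau$ and no overshoot. Your one genuine addition is making explicit the normalization $\Qtt(\sup_t M_t\ge 1/\alpha)=\alpha$, which is indeed silently used when the paper writes ``Since $\EE_\Qtt[\psi_\infty]=\alpha$.'' Without it the claim fails: as you note, a $\Qtt$-NM can have $M_\infty\in\{0,1/\alpha\}$ and $M_t<1/\alpha$ for all $t$ (e.g., the Doob--L\'evy martingale of $(1/\alpha)\mathbf 1_{V<\alpha}$ in the Bernoulli-mixture example of Example~\ref{ex:200818}), which induces the trivially inadmissible test $\psi\equiv 0$. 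So your handling of the reverse direction is actually slightly more careful than the paper's, but you still need to supply proofs of parts~\ref{thm:pointwise:1} and \ref{thm:pointwise:2}.
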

The theorem is proven later in this section, with one subsection per statement.
\begin{remark}  
    Similar to max-martingales (Subsection~\ref{SS:MM}), we could have introduced min-martingales by replacing conditional suprema by infima. With such a notion in place it would be easy to see that 
    $(\psi_t)$ is an admissible $(\Qtt,\alpha)$-sequential test if and only if $(\psi_t)$ is a closed min-martingale such that $\sup_{t \in \NN_0} \psi_t$ is $\{0,1\}$-valued and $\Qtt(\sup_{t \in \NN_0} \psi_t = 1) = \alpha$. 
\end{remark}

Unfortunately, a version of the first statement in Theorem~\ref{thm:pointwise} that is phrased as follows---`if $(\Pval_t)$ is admissible then
$\Pval_t =  \inf_{s \leq t} 1/ {M_s}$
for all $t \in \NN_0$, where $(M_t)$ is a $\Qtt$-NM'---is incorrect. Below is a simple counterexample.
\begin{example}
Let $\Pval_0=U$,  and $\Pval_0 = \Pval_1 = \Pval_2 = \Pval_3 = \dots$. Then $(\Pval_t)$ is admissible since it cannot be improved without violating uniformity. However, since the inverse of a uniform random variable is not integrable, we cannot find a NM $(M_t)$ that yields $(\Pval_t)$; indeed, no nonnegative integrable random variable $N_0$ can yield $\Pval_0 = 1/N_0$. 
\end{example}
The above example demonstrates that max-martingales, not the `usual' martingales, are the right mathematical construct to deal with p-processes.
We remark that if we are allowed to construct continuous-time processes, then one can work with usual martingales, see~\citet[Theorem 2]{shafer_test_2011}. 
However, we do obtain the following corollary of Theorem~\ref{thm:pointwise}\ref{thm:pointwise:1}.

\begin{corollary} \label{C:200906}
Let $(M_t)$ be a $\Qtt$--martingale with $M_0 > 0$ and let $F$ denote the distribution of $\inf_{t \in \NN_0} 1/ M_t$. If $F$ is atomless, then 
$\Pval_t := F(\inf_{s \leq t} 1/M_s)$ is an admissible p-process.
\end{corollary}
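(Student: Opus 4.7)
The plan is to verify the sufficiency conditions in Theorem~\ref{thm:pointwise}\ref{thm:pointwise:1}: a closed $\Qtt$-MM whose infimum is $\Qtt$-uniformly distributed on $[0,1]$ is automatically admissible. Write $Q_t := \inf_{s\leq t} 1/M_s$ and $Q_\infty := \lim_{t\to\infty} Q_t = \inf_{t\in\NN_0} 1/M_t$, so that $\Pval_t = F(Q_t)$. By \eqref{eq_inf_M_recip_mg}, $(Q_t)$ is itself a closed $\Qtt$-MM with $Q_t = \bigvee_\Qtt[Q_\infty \mid \Fcal_t]$. The two pieces to establish are therefore that $(\Pval_t)$ is a closed $\Qtt$-MM and that $\inf_{t\in\NN_0} \Pval_t$ is $\Qtt$-uniform on $[0,1]$.

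The uniformity piece is short. Since $F$ is continuous and $Q_t \downarrow Q_\infty$, one has $\inf_{t \in \NN_0} \Pval_t = F(Q_\infty)$; the probability integral transform then gives that $F(Q_\infty)$ is $\Qtt$-uniformly distributed on $[0,1]$, using $Q_\infty \sim F$ together with atomlessness (hence continuity) of $F$.

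The substantive step is to show that $F$ commutes with the $\Fcal_t$-conditional supremum of $Q_\infty$, i.e.\ that
\[
F(Q_t) \;=\; {\textstyle\bigvee_\Qtt}\bigl[F(Q_\infty) \,\big|\, \Fcal_t\bigr].
\]
Monotonicity of $F$ together with $\Fcal_t$-measurability of $F(Q_t)$ delivers `$\geq$' immediately. For `$\leq$' I would argue by contradiction: suppose some $\Fcal_t$-measurable $Z$ satisfies $Z \geq F(Q_\infty)$ almost surely yet $Z < F(Q_t)$ on an event $A$ of positive $\Qtt$-probability. Let $F^{-1}(z) := \inf\{y \geq 0 : F(y) \geq z\}$ and set $Y := F^{-1}(Z)$, which is $\Fcal_t$-measurable. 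I would then derive two facts that contradict the essential-infimum definition of $Q_t = \bigvee_\Qtt[Q_\infty \mid \Fcal_t]$: (i) $Y \geq Q_\infty$ almost surely, and (ii) $Y < Q_t$ on $A$.

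The main obstacle is handling possible plateaus of $F$, on which $F^{-1}\circ F$ can fall short of the identity, and it is exactly here that atomlessness of $F$ is essential. There are at most countably many maximal flat intervals of $F$, and each carries zero mass under $F$, so $Q_\infty$ almost surely avoids them; hence $F^{-1}(F(Q_\infty)) = Q_\infty$ almost surely, and (i) follows by applying the monotone $F^{-1}$ to $Z \geq F(Q_\infty)$. For (ii), since $Z < F(Q_t) \leq 1$ on $A$, continuity of $F$ gives $F(F^{-1}(Z)) = Z < F(Q_t)$ there, which forces $F^{-1}(Z) < Q_t$ on $A$ (equality would make $F$ take two different values at the same point). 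With (i) and (ii) contradicting the essential-infimum definition of $Q_t$, the identity above is established, and Theorem~\ref{thm:pointwise}\ref{thm:pointwise:1} then concludes that $(\Pval_t)$ is an admissible $p$-value.
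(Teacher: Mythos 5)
Your proof is correct and follows the same overall route as the paper: invoke Theorem~\ref{thm:pointwise}\ref{thm:pointwise:1} by combining \eqref{eq_inf_M_recip_mg} with the observation that $F$ commutes through the conditional supremum, then note $F(Q_\infty)$ is uniform by the probability integral transform. The difference is that the paper simply asserts the commutation as a known fact, while you supply a self-contained proof of it; this is legitimate content but your argument is heavier than it needs to be. Specifically, your `$\le$' direction uses atomlessness of $F$ (so that $Q_\infty$ avoids plateaus of $F$ almost surely and $F^{-1}(F(Q_\infty))=Q_\infty$), but the commutation identity $\bigvee_\Qtt[f(Y)\mid\Gcal]=f(\bigvee_\Qtt[Y\mid\Gcal])$ holds for \emph{any} continuous nondecreasing $f$ and \emph{any} $Y$, with no distributional hypotheses: given a $\Gcal$-measurable $Z\ge f(Y)$ a.s., set $W:=\sup\{y: f(y)\le Z\}$; then $W$ is $\Gcal$-measurable, $f(Y)\le Z$ forces $Y\le W$ a.s.\ (so $W\ge \bigvee_\Qtt[Y\mid\Gcal]$), and by continuity $f(W)\le Z$, hence $f(\bigvee_\Qtt[Y\mid\Gcal])\le f(W)\le Z$. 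Your detour through the generalized inverse $F^{-1}$ and the null-plateau argument is correct but obscures that atomlessness enters only once, to make $F(Q_\infty)$ uniform. One small gap in the writing: when you posit a witness $Z$ with $Z<F(Q_t)$ on a positive-probability set, you are implicitly using that the family of $\Fcal_t$-measurable a.s.\ upper bounds of $F(Q_\infty)$ is closed under minima, so its essential infimum is approximated from above by members of the family (Proposition~\ref{P_esssup_closed_max}); this should be said explicitly, since otherwise the existence of such a $Z$ does not follow merely from $F(Q_t)$ exceeding the essential infimum on a positive-probability set.
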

\begin{proof}
We will make use of the fact that the conditional supremum commutes with continuous nondecreasing functions: $\bigvee[ f(Y) \mid \Gcal ] = f( \bigvee[ Y \mid \Gcal ] )$ for every continuous nondecreasing function $f$; we will use this with $f=F$. Combining this with the max-martingale property of the reciprocal of the running supremum of an NM (see \eqref{eq_inf_M_recip_mg}) we get, 
\[
\Pval_t = F\left(\inf_{s\le t} \frac{1}{M_s} \right) = F\left( {\textstyle\bigvee_\Qtt} \Big[ \inf_{s\in\NN_0} \frac{1}{M_s} \Big| \Fcal_t \Big] \right) =  {\textstyle\bigvee_\Qtt} \Big[ F\Big(\inf_{s\in\NN_0} \frac{1}{M_s} \Big) \Big| \Fcal_t \Big] = {\textstyle\bigvee_\Qtt} \Big[ \inf_{s\in\NN_0} \Pval_s \Big| \Fcal_t \Big].
\]
The last equality follows because $\inf_{s\in\NN_0} \Pval_s = F(\inf_{s \in \NN_0} 1 / M_s)$, which is uniformly distributed since $F$ is atomless. We are now in a position to apply Theorem~\ref{thm:pointwise}\ref{thm:pointwise:1} to conclude that $(\Pval_t)$ is admissible.
\end{proof}

Let us now provide an example of a closed $\Qtt$-MM that satisfies $F(\inf_{t \in \NN_0} \Pval_t) = \inf_{t \in \NN_0} \Pval_t$ in the notation of Theorem~\ref{thm:pointwise}\ref{thm:pointwise:1}, but is not admissible.
\begin{example}[The gap between sufficient and admissible conditions in Theorem~\ref{thm:pointwise}\ref{thm:pointwise:1}]\label{eg:inadmissible-NM-based-pvalue}
This example shows that simply using $F(\inf_{s\leq t} 1/ M_s)$ for a $\Qcal$-NM $(M_t)$ and $F$ as in Corollary~\ref{C:200906} does not typically yield an admissible p-process.
It also provides an example for the gap between the sufficient and admissible conditions in Theorem~\ref{thm:pointwise}\ref{thm:pointwise:1}.
Define the martingale $(M_t)$ by $M_t :=  1+ \1_{U \leq 1/2}$ for all $t \in \NN_0$.  Then $F (\inf_{s \leq t} 1/{M_s}) = \inf_{s \leq t} 1/{M_s}$
is an inadmissible p-process. (It is, however, a p-process despite $\EE_\Qtt[M_0] > 1$.)
To see this, define a p-process $(\Pval_t)$ by $\Pval_t = U$ for all $t  \in \NN_0$. Then $(\Pval_t)$ strictly dominates $\inf_{s \leq t} 1/ {M_s}$ because
\[
    U \leq \frac{1}{2} \1_{U \leq 1/2} +  \1_{U > 1/2} = \inf_{s \leq t}  \frac1{M_s}.
\]
\end{example}

\subsection{Necessary and sufficient conditions for p-processes (proof)}

\begin{proof}[Proof of Theorem~\ref{thm:pointwise}\ref{thm:pointwise:1}]
We start with the necessary conditions for pointwise admissibility of a p-process.  To this end, let $(\Pval_t)$ be a $\Qtt$-admissible p-process, which must necessarily be nonincreasing by Remark~\ref{rem:running-extremum}. 
First define
\[
\bar\Pval := \inf_{t\in\NN_0}\Pval_t = \lim_{t \to \infty} \Pval_t,
\]
and let $F$ be the distribution function of $\bar\Pval$. Since $(\Pval_t)$ is valid, $\bar \Pval$ is stochastically larger than uniform and so $F(x) \leq x$ for all $x \in [0,1]$.  For later use, let us observe that $F$ is right-continuous, hence
\begin{align} \label{eq:200906}
\lim_{t \to \infty} F(\Pval_t) =  F(\bar \Pval).
\end{align}


We now define
\[
\Pval_t' := {\textstyle\bigvee_\Qtt} \left[\left. F(\bar \Pval) \right| \Fcal_t \right] .
\]
Since $F$ is a nondecreasing function, we have $F(\bar\Pval) \leq F(\Pval_t)$ for all $t \in \NN_0$. By definition of conditional supremum, $\Pval'_t$ is the \emph{smallest} $\Fcal_t$-measurable random variable with this property; therefore, 
$\Pval'_t \le F(\Pval_t)$ for all $t \in \NN_0$. Since we also have $F(x) \leq x$ for all $x \in [0,1]$ we get $\Pval'_t \le\Pval_t$ for all $t \in \NN_0$. But $(\Pval_t)$ is admissible by assumption, so we must in fact have the equality $\Pval'_t = \Pval_t$ for all $t \in \NN_0$.

We have now argued $F(\bar \Pval) \leq \Pval_t' = \Pval_t \leq F(\Pval_t)$. Taking now limits in $t$ and recalling \eqref{eq:200906}, we get 
$F(\bar \Pval) \leq \bar \Pval \leq \lim_{t \to \infty} F(\Pval_t) = F(\bar \Pval)$, thus allowing us to conclude that $F(\bar \Pval) = \bar \Pval$ and that $(\Pval_t)$ is closed.
This yields the necessary conditions of the theorem. 

Let us now discuss the sufficient conditions for pointwise admissibility of a p-process. To this end, let $(\Pval_t)$ be 
a closed MM such that $\inf_{t \in \NN_0} \Pval_t$ is uniformly distributed.  It is then clear that $(\Pval_t)$ is valid.
Consider now an arbitrary p-process $(\Pval_t')$ with
$\Pval_t' \le \Pval_t$ for all $t \in \NN_0$. We must argue that we have equality. 
We clearly have $\inf_{t\in\NN_0} \Pval_t' \le \inf_{t\in\NN_0} \Pval_t$. The validity of $(\Pval_t')$ implies moreover that $\inf_{t\in\NN_0} \Pval_t'$ stochastically dominates a uniform. This now directly yields that we have indeed $\inf_{t\in\NN_0} \Pval_t' = \inf_{t\in\NN_0} \Pval_t =: \bar \Pval$, which is uniform.
 By definition of max-martingales, $\Pval_t$ is the smallest $\Fcal_t$-measurable upper bound on $\bar \Pval$; since $\Pval_t'$ is another such bound, we must have $\Pval_t' \ge \Pval_t$ for all $t \in \NN_0$.  This proves that $(\Pval_t)$ is admissible.
 \end{proof}
 
 \subsection{Necessary and sufficient conditions for e-processes (proof)} \label{SS:6.2}

\begin{proof}[Proof of Theorem~\ref{thm:pointwise}\ref{thm:pointwise:2}]
Again, let us start with the necessary conditions for pointwise admissibility of an e-process. 
 To this end, let us fix an admissible $(\Eval_t)$. Thanks to the implication \ref{L:e-process_definitions.6} $\Rightarrow$ \ref{L:e-process_definitions.6} of Lemma~\ref{L:e-process_definitions}, there is a $\Qtt$-NM $(M_t)$ that dominates $(\Eval_t)$. Since $(\Eval_t)$ was assumed admissible we get $\Eval_t = M_t$
for all $t \in \NN_0$.
 Moreover, we can assume that $\EE_{\Qtt}[M_0]=1$, else we can replace $(M_t)$ by $(M_t + 1 - \EE_{\Qtt}[M_0])$, which is again an e-process (because it remains a nonnegative martingale, now with initial value one, and the optional stopping theorem applies). 

Let us now discuss the sufficient conditions for pointwise admissibility of a NM $(\Eval_t)$ with $\EE_{\Qtt}[\Eval_0] = 1$. First of all, the optional stopping theorem yields that $(\Eval_t)$ is an e-process.  Consider now some e-process $(\Eval_t')$ for $\Qtt$ with $\Eval_t' \geq \Eval_t$. Since $1 = \EE_{\Qtt}[\Eval_t] \leq \EE_{\Qtt}[\Eval_t'] \leq 1$, we then have $\Eval_t = \Eval_t'$ for each $t \in \NN_0$, yielding the admissibility of $(\Eval_t)$, hence the assertion.
\end{proof}

\subsection{Necessary and sufficient conditions for sequential tests (proof)} \label{SS:6.3}

\begin{proof}[Proof of Theorem~\ref{thm:pointwise}\ref{thm:pointwise:3}]
   Let us start with the necessary conditions for pointwise admissibility of a sequential test. 
   Recall that by assumption, $(\psi_t)$ satisfies
\[
\bar \alpha := \Qtt(\exists t \in \NN_0: \psi_t = 1) \leq \alpha.
\]
Define now
\begin{align*}
   \psi'_t := \1_{U \leq \alpha - \bar \alpha} + \psi_t \1_{U > \alpha - \bar \alpha}, \qquad t \in \NN_0.
   \end{align*}
Note that $\psi'_t \geq \psi_t$ for all $t \in \NN_0$ and $(\psi'_t)$ is again a sequential test. If $\bar \alpha < \alpha$ then indeed $(\psi'_t)$ strictly dominates $(\psi_t)$, in contradiction to the admissibilty of $(\psi_t)$. Hence we may assume that $\bar \alpha = \alpha$.  

Define now the Doob-L\'evy  martingale $(M_t)$ by
\begin{equation*}
M_t:= \frac{ \Qtt(\exists s \in \NN_0: \psi_s = 1 ~|~ \Fcal_t) }{\Qtt(\exists s \in \NN_0: \psi_s = 1) } = \frac{ \Qtt(\exists s \in \NN_0: \psi_s = 1 ~|~ \Fcal_t) }{\alpha}.
\end{equation*}
Note that $\EE_\Qtt[M_0] =1$ and if there exists a time $\tau$ at which $\psi_\tau =1$, then $M_t = 1/\alpha$ for any $t\geq \tau$. So, $M_\infty \in \{0,1/\alpha\}$. 
Define next $\widetilde \psi_t := \1_{M_t \geq 1/\alpha}$. By Ville's inequality, $(\widetilde \psi_t)$ is a sequential test that dominates $(\psi_t)$. Since the latter was assumed admissible, we have established $\psi_t = \widetilde \psi_t$ for all $t \in \NN_0$.

Consider now a NM $(M_t)$ with no overshoot at $1/\alpha$ and $M_\infty \in \{0,1/\alpha\}$ and define
 $\psi_t := \1_{\sup_{s\leq t}M_s \geq 1/\alpha}$. By Ville's inequality, $(\psi_t)$ is a sequential test.
 Consider next some sequential test $(\psi_t')$ with $\psi_t' \geq \psi_t$ and fix some $t^* \in \NN_0$. Since $\EE_\Qtt[\psi_\infty] = \alpha$ we know that $\psi_{t^*}' = 1$ only on the event $\{\psi_\infty = 1\} = \{\tau < \infty\}$, where $\tau :=  \inf\{t \in \NN_0: M_t \geq 1/\alpha\}$. 
Hence, $\psi_{t^*}' = 1$ implies that $M_t \geq 1/\alpha$; otherwise 
 the martingale property of $(M_t)$ would be contradicted. Thus $(\psi_t)$ is indeed $\Qcal$-admissible, concluding the proof of the statement.
\end{proof}

\section{Reducing admissible composite inference to the pointwise case}\label{sec:composite-via-pointwise}

To build intuition towards composite admissibility, we begin with a question on validity: is there a systematic way to construct tools for valid (potentially inadmissible) inference in composite settings?

\subsection{Necessary and sufficient conditions for valid (composite) inference}

The following observations are straightforward and arguably well-known in some form or another, but are nevertheless useful to spell out formally in order to lay the path for the admissibility results.

\begin{proposition}[Pointwise-to-composite validity]\label{prop:pointwise-to-composite}
The following statements lay out necessary and sufficient conditions that connect validity in pointwise and  composite settings.
\begin{enumerate}[label={\rm(\arabic{*})}, ref={\rm(\arabic{*})}] 
    \item\label{prop:pointwise-to-composite:1} $(\Pval_t)$ is a $\Qcal$-p-process if and only if $\Pval_t \geq \Pval_t^\Qtt$, $\Qtt$-a.s., for all $t$ and $\Qtt \in \Qcal$, where $(\Pval_t^\Qtt)$ is some p-process for $\Qtt$.
    \item\label{prop:pointwise-to-composite:2} $(\Eval_t)$ is a $\Qcal$-e-process if and only if $\Eval_t \leq \Eval_t^\Qtt$, $\Qtt$-a.s., for all $t$ and $\Qtt \in \Qcal$, where $(\Eval_t^\Qtt)$ is some e-process for $\Qtt$.
    \item\label{prop:pointwise-to-composite:3} $(\psi_t)$ is a $(\Qcal,\alpha)$-ST if and only if $\psi_t \leq \psi_t^\Qtt$, $\Qtt$-a.s., for all $t$ and $\Qtt \in \Qcal$, where $(\psi_t^\Qtt)$ is some $(\Qtt,\alpha)$-ST.
\end{enumerate}
\end{proposition}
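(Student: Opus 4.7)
The plan is to handle each of the three statements separately, observing that both directions reduce to an elementary monotonicity argument and that the ``only if'' direction is essentially tautological. Specifically, if $(\Pval_t)$ is $\Qcal$-valid, then for each $\Qtt \in \Qcal$ the sequence $(\Pval_t)$ itself is already a $p$-value for $\Qtt$, so I would simply set $\Pval_t^\Qtt := \Pval_t$, which trivially yields $\Pval_t \ge \Pval_t^\Qtt$. The analogous choices $\Eval_t^\Qtt := \Eval_t$ and $\psi_t^\Qtt := \psi_t$ handle the other two cases.

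For the ``if'' direction of statement \ref{prop:pointwise-to-composite:1}, I would fix $\Qtt \in \Qcal$ and an arbitrary stopping time $\tau$. Since $\Pval_\tau \ge \Pval_\tau^\Qtt$ holds $\Qtt$-almost surely, the event $\{\Pval_\tau \le \alpha\}$ is contained (up to $\Qtt$-null sets) in $\{\Pval_\tau^\Qtt \le \alpha\}$, whence
\[
\Qtt(\Pval_\tau \le \alpha) \le \Qtt(\Pval_\tau^\Qtt \le \alpha) \le \alpha
\]
for every $\alpha \in [0,1]$, where the last inequality uses that $(\Pval_t^\Qtt)$ is a $\Qtt$-valid $p$-value in the sense of Lemma~\ref{lem:composite_equiv_uniform_defns}. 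Taking the supremum over $\Qtt \in \Qcal$ and over stopping times establishes $\Qcal$-validity.

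Statements \ref{prop:pointwise-to-composite:2} and \ref{prop:pointwise-to-composite:3} follow by the same template with the appropriate monotone functional in place of the cumulative distribution function. For \ref{prop:pointwise-to-composite:2}, the $\Qtt$-a.s.\ inequality $\Eval_\tau \le \Eval_\tau^\Qtt$ and monotonicity of the expectation give $\EE_\Qtt[\Eval_\tau] \le \EE_\Qtt[\Eval_\tau^\Qtt] \le 1$. For \ref{prop:pointwise-to-composite:3}, the inequality $\psi_\tau \le \psi_\tau^\Qtt$ together with the fact that both take values in $\{0,1\}$ yields $\Qtt(\psi_\tau = 1) \le \Qtt(\psi_\tau^\Qtt = 1) \le \alpha$. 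There is no substantive obstacle in this proof: the content of the proposition is really the observation that composite validity/safety is by definition pointwise validity/safety under every member of $\Qcal$, and that the defining one-sided inequalities ($\Pval_\tau \le \alpha$, $\EE_\Qtt[\Eval_\tau] \le 1$, $\Qtt(\psi_\tau=1)\le\alpha$) are preserved under pointwise domination in the correct direction. In particular, no measurable selection in the parameter $\Qtt$ is required, since each $\Pval_t^\Qtt$, $\Eval_t^\Qtt$, or $\psi_t^\Qtt$ is allowed to depend on $\Qtt$ in an arbitrary way.
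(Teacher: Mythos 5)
Your proof is correct and follows essentially the same route as the paper's: the ``only if'' direction is the tautological choice $\Pval_t^\Qtt := \Pval_t$ (and analogously for $e$-values and tests), and the ``if'' direction is the monotonicity argument that $\Qtt$-a.s.\ domination by a pointwise-valid object transfers the defining inequality. The only cosmetic difference is that you argue directly with an arbitrary stopping time $\tau$, while the paper uses the equivalent ``$\exists t$'' formulation from Lemma~\ref{lem:composite_equiv_uniform_defns} (which sidesteps any worry about passing the fixed-time a.s.\ inequality to stopping times, though that step is harmless here since time is discrete and countably many null sets merge); you also spell out cases \ref{prop:pointwise-to-composite:2} and \ref{prop:pointwise-to-composite:3} rather than declaring them analogous.
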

\begin{proof}
We only prove \ref{prop:pointwise-to-composite:1}, the other two assertions are argued analogously. Suppose we are given that $(\Pval_t)$ is a $\Qcal$-p-process. Then, by definition, $\Pval_t$ is a $\Qtt$-p-process for every $\Qtt \in \Qcal$; so choosing $(\Pval^\Qtt_t) := (\Pval_t)$ itself, we have proved the `only if' direction. 
For the other direction, suppose for every $\Qtt\in\Qcal$ we are given a p-process $(\Pval^\Qtt_t)$ for $\Qtt$, and that $(\Pval_t)$ satisfies $\Pval_t \ge \Pval_t^\Qtt$, $\Qtt$-almost surely, for all $t$. Let $\Qtt^* \in \Qcal$ be some true (arbitrary) data-generating distribution. We must argue that $(\Pval_t)$ is a $\Qtt^*$-p-process. Indeed, for any $\alpha \in [0,1]$, we have
\[
\Qtt^*(\exists t \in \NN: \Pval_t \leq \alpha) \leq \Qtt^*\left(\exists t \in \NN: \Pval^{\Qtt^*}_t \leq \alpha\right) \leq \alpha,
\]
where the first inequality follows because $\Pval_t \geq \Pval_t^{\Qtt^*}$, and the second inequality follows because $(\Pval_t^{\Qtt^*})$ is a $\Qtt^*$-p-process by assumption. This concludes the proof.
\end{proof}

The proposition provides a generic reduction from the composite setting to the pointwise setting for performing \emph{valid} inference, but we can deduce a similar result for admissible inference, presented later. While Proposition~\ref{prop:pointwise-to-composite} forms a useful building block, it makes no mention of martingales. 
The notions of essential supremum and essential infimum used below are reviewed in Appendix~\ref{sec:esssup}. Note that we will need the additional restriction that $\Qcal$ is locally dominated in order for these essential extrema to be well defined.

\begin{corollary}[Pointwise supermartingales are sufficient for composite validity]\label{cor:martingale-sufficiency-lemma}
Let  $\Qcal$ be locally dominated. Then the following statements demonstrate how supermartingales suffice for valid sequential inference. 
\begin{enumerate}[label={\rm(\arabic{*})}, ref={\rm(\arabic{*})}] 
\item\label{cor:martingale-sufficiency-lemma:1} If $\Pval_t = \esssup_{\Qtt \in \Qcal} 1\wedge \inf_{s \leq t} 1/{N^\Qtt_s}$, where $(N^\Qtt_t)$ is upper bounded by a $\Qtt$-NSM, then $(\Pval_t)$ is a $\Qcal$-p-process.
    \item\label{cor:martingale-sufficiency-lemma:2} If $\Eval_t = \mathrm{ess}\inf_{\Qtt \in \Qcal} N_t^\Qtt$, where $(N_t^\Qtt)$ is upper bounded by a $\Qtt$-NSM, then $(\Eval_t)$ is $\Qcal$-e-process.
    \item\label{cor:martingale-sufficiency-lemma:3} If $\smash{\psi_t = \essinf_{\Qtt \in \Qcal} \sup_{s \leq t} \1_{N^\Qtt_s \geq 1/\alpha}}$, where $(N_t^\Qtt)$ is upper bounded by a $\Qtt$-NSM, then $(\psi_t)$ is a $(\Qcal,\alpha)$-ST.
\end{enumerate}
Above, all $\Qtt$-NSMs have initial expected value at most one, and $(N^\Qtt_t)$ is assumed nonnegative.
\end{corollary}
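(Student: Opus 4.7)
The plan is to reduce each of the three assertions to its pointwise analogue via Proposition~\ref{prop:pointwise-to-composite}. For each fixed $\Qtt \in \Qcal$, the natural per-measure ingredients are
\[
\Pval^\Qtt_t := 1 \wedge \inf_{s \leq t} 1/N^\Qtt_s, \qquad \Eval^\Qtt_t := N^\Qtt_t, \qquad \psi^\Qtt_t := \sup_{s \leq t} \1_{N^\Qtt_s \geq 1/\alpha}.
\]
My first step would be to check that these are, respectively, a $p$-value for $\Qtt$, an $e$-value for $\Qtt$, and a $(\Qtt,\alpha)$-sequential test. For the $e$-value, Doob's optional stopping theorem applied to the dominating $\Qtt$-NSM yields $\EE_\Qtt[N^\Qtt_\tau] \leq \EE_\Qtt[N^\Qtt_0] \leq 1$ for every stopping time $\tau$, which is exactly $\Qtt$-safety. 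For the $p$-value and the sequential test, both claims reduce to Ville's inequality~\eqref{eq:Ville} applied to the dominating $\Qtt$-NSM: the bound $\Qtt(\exists t: N^\Qtt_t \geq 1/\alpha) \leq \alpha$ translates directly into $\Qtt(\exists t: \Pval^\Qtt_t \leq \alpha) \leq \alpha$ and into $\Qtt(\exists t: \psi^\Qtt_t = 1) \leq \alpha$, whence Lemma~\ref{lem:composite_equiv_uniform_defns} delivers validity at arbitrary stopping times.

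Second, I would aggregate these pointwise objects across $\Qcal$ using the essential extrema that appear in the statement. Because $\Qcal$ is locally dominated, the objects $\esssup_{\Qtt \in \Qcal} \Pval^\Qtt_t$, $\essinf_{\Qtt \in \Qcal} \Eval^\Qtt_t$, and $\essinf_{\Qtt \in \Qcal} \psi^\Qtt_t$ are well-defined $\Fcal_t$-measurable random variables as reviewed in Appendix~\ref{sec:esssup}. By the defining property of these essential extrema, under any $\Qtt \in \Qcal$ we obtain $\Qtt$-almost surely
\[
\Pval_t \geq \Pval^\Qtt_t, \qquad \Eval_t \leq \Eval^\Qtt_t, \qquad \psi_t \leq \psi^\Qtt_t
\]
for every $t$. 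These are precisely the hypotheses of Proposition~\ref{prop:pointwise-to-composite}\ref{prop:pointwise-to-composite:1}--\ref{prop:pointwise-to-composite:3}, so the three composite validity conclusions follow.

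The main subtlety, if any, will be the technical handling of the essential extrema when $\Qcal$ is uncountable; this is precisely why the local domination hypothesis is imposed. Fixing a reference measure $\Rtt$ that locally dominates $\Qcal$ lets one compute the essential extrema under $\Rtt$, and since $\Qtt_t$ is absolutely continuous with respect to $\Rtt_t$ on $\Fcal_t$ for each $\Qtt \in \Qcal$, the $\Rtt$-almost sure inequalities above transfer verbatim to $\Qtt$-almost sure inequalities. Once this bookkeeping is in place, nothing beyond Ville's inequality, Doob's optional stopping, and the reduction lemma is required.
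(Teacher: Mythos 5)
Your proof is correct and follows exactly the route the paper intends: it establishes pointwise validity of each $(\Pval^\Qtt_t)$, $(\Eval^\Qtt_t)$, $(\psi^\Qtt_t)$ via Ville's inequality and optional stopping, and then aggregates through the essential extrema to invoke Proposition~\ref{prop:pointwise-to-composite} (the paper omits the proof for precisely this reason, noting it "follows directly" from that proposition together with Remark~\ref{rem:running-extremum}). Your remark on transferring the $\Rtt$-almost sure inequalities to $\Qtt$-almost sure ones via local domination is the right way to handle the only technical point.
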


This corollary follows from Proposition~\ref{prop:pointwise-to-composite} and so its proof is omitted; see also Remark~\ref{rem:running-extremum}.

\subsection{Necessary conditions for admissible composite inference}

Not all constructions using martingales are admissible: we provide an analog to Proposition~\ref{prop:pointwise-to-composite} for admissibility. 

\begin{proposition}[Admissible composite tests must aggregate admissible pointwise tests]\label{prop:admissible-pointwise-composite-aggregate}
Let $\Qcal$ be locally dominated.
The following statements show how composite admissible instruments must aggregate (some) pointwise admissible instruments.
\begin{enumerate}[label={\rm(\arabic{*})}, ref={\rm(\arabic{*})}] 
    \item\label{prop:admissible-pointwise-composite-aggregate:1} 
    If $(\Pval_t)$ is $\Qcal$-admissible, then $\Pval_t = \esssup_{\Qtt \in \Qcal} \Pval_t^\Qtt$ for all $t$, where $(\Pval_t^\Qtt)$ is $\Qtt$-admissible.
    \item\label{prop:admissible-pointwise-composite-aggregate:2}  If $(\Eval_t)$ is $\Qcal$-admissible, then $\Eval_t = \essinf_{\Qtt \in \Qcal} \Eval_t^\Qtt$ for all $t$, where $(\Eval_t^\Qtt)$ is $\Qtt$-admissible.
    \item\label{prop:admissible-pointwise-composite-aggregate:3}  If $(\psi_t)$ is $\Qcal$-admissible, then $\smash{\psi_t = {\essinf}_{\Qtt \in \Qcal}~\psi^\Qtt_t}$ for all $t$, where $(\psi_t^\Qtt)$ is $\Qtt$-admissible.
\end{enumerate}
\end{proposition}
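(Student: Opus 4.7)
The plan is to establish all three parts via a uniform three-step reduction. First, invoke Proposition~\ref{P:200905} separately for each $\Qtt \in \Qcal$ to replace the composite object by a $\Qtt$-admissible pointwise counterpart dominating the original object under $\Qtt$. Second, aggregate these pointwise counterparts via an essential supremum over $\Qtt \in \Qcal$ in part~\ref{prop:admissible-pointwise-composite-aggregate:1}, and via an essential infimum in parts~\ref{prop:admissible-pointwise-composite-aggregate:2}--\ref{prop:admissible-pointwise-composite-aggregate:3}. Third, verify via Proposition~\ref{prop:pointwise-to-composite} that the aggregate is still a $\Qcal$-valid $p$-value (respectively a $\Qcal$-safe $e$-value, a $(\Qcal,\alpha)$-ST), and then invoke $\Qcal$-admissibility of the original to force equality. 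Local domination of $\Qcal$ ensures the essential extrema are well-defined and $\Fcal_t$-measurable (Appendix~\ref{sec:esssup}).

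For part~\ref{prop:admissible-pointwise-composite-aggregate:1}, fix a $\Qcal$-admissible $(\Pval_t)$. For each $\Qtt \in \Qcal$, the $\Qtt$-valid sequence $(\Pval_t)$ is, by Proposition~\ref{P:200905}, dominated by some $\Qtt$-admissible $(\Pval_t^\Qtt)$ with $\Pval_t^\Qtt \leq \Pval_t$ $\Qtt$-a.s. Replacing $\Pval_t^\Qtt$ by $\min(\Pval_t^\Qtt,\Pval_t)$ alters it only on a $\Qtt$-null set, so it remains $\Qtt$-admissible, while now $\Pval_t^\Qtt \le \Pval_t$ holds pointwise. Define $\tilde \Pval_t := \esssup_{\Qtt \in \Qcal} \Pval_t^\Qtt$. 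Then $\tilde \Pval_t \le \Pval_t$ almost surely under the dominating measure, while $\tilde \Pval_t \geq \Pval_t^\Qtt$ $\Qtt$-a.s.\ for every $\Qtt \in \Qcal$. Proposition~\ref{prop:pointwise-to-composite}\ref{prop:pointwise-to-composite:1} therefore yields $\Qcal$-validity of $(\tilde \Pval_t)$, and admissibility of $(\Pval_t)$ forces $\tilde \Pval_t = \Pval_t$, $\Qtt$-a.s., for every $\Qtt \in \Qcal$, which is the claimed identity.

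The proofs of parts~\ref{prop:admissible-pointwise-composite-aggregate:2} and~\ref{prop:admissible-pointwise-composite-aggregate:3} follow the same script with inequalities reversed, $\min$ replaced by $\max$, and essential supremum replaced by essential infimum; we use Proposition~\ref{prop:pointwise-to-composite}\ref{prop:pointwise-to-composite:2} and~\ref{prop:pointwise-to-composite:3} in place of~\ref{prop:pointwise-to-composite:1}. For $e$-values, Proposition~\ref{P:200905} yields $\Qtt$-admissible $(\Eval_t^\Qtt)$ with $\Eval_t^\Qtt \geq \Eval_t$ $\Qtt$-a.s.; replace by $\max(\Eval_t^\Qtt,\Eval_t)$, set $\tilde \Eval_t := \essinf_{\Qtt \in \Qcal}\Eval_t^\Qtt$, and conclude as before. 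For sequential tests, the essential infimum of a family of $\{0,1\}$-valued adapted sequences is again $\{0,1\}$-valued and adapted, so $\tilde \psi_t := \essinf_{\Qtt \in \Qcal}\psi_t^\Qtt$ is of the correct type.

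The one genuine subtlety is the coexistence of two notions of ``almost sure'': the essential supremum/infimum is formed under a measure locally dominating $\Qcal$, whereas admissibility is stated $\Qtt$-a.s.\ for every $\Qtt \in \Qcal$. The preliminary $\min$/$\max$ step bridges the two by promoting $\Qtt$-a.s.\ inequalities into pointwise ones, so that the desired dominance relation between the aggregate and the original object survives even on events that are null under some $\Qtt \in \Qcal$ but not under others. Once this bookkeeping is carried out, the argument reduces to citing Propositions~\ref{P:200905} and~\ref{prop:pointwise-to-composite}, with admissibility of the original object acting as the maximality principle that collapses the aggregate back onto it.
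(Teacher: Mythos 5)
Your proposal is correct and follows the paper's own proof of this proposition: for each $\Qtt\in\Qcal$ obtain a $\Qtt$-admissible dominator via Proposition~\ref{P:200905}, aggregate by an essential supremum (resp.\ infimum), certify that the aggregate is still $\Qcal$-valid via Proposition~\ref{prop:pointwise-to-composite}, and let $\Qcal$-admissibility of the original force equality. Your preliminary $\min$/$\max$ modification on $\Qtt$-null sets---which upgrades the $\Qtt$-a.s.\ domination to a pointwise one so that the essential extremum is comparable to the original object under \emph{every} $\Qtt\in\Qcal$---is careful bookkeeping that the paper's one-line assertion at this step glosses over, so your write-up is if anything slightly more rigorous than the original.
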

\begin{proof}
Let $(\Pval_t)$ denote a $\Qcal$-admissible p-process. For each $\Qtt \in \Qcal$, let $(\Pval^\Qtt_t)$ be a $\Qtt$-admissible p-process that dominates $(\Pval_t)$. Such $(\Pval^\Qtt_t)$ exists thanks to Proposition~\ref{P:200905}.
Let us now define
$\Pval_t' := \esssup_{\Qtt\in\Qcal} \widetilde\Pval^\Qtt_t$, which is a $\Qcal$-p-process thanks to  Proposition~\ref{prop:pointwise-to-composite}\ref{prop:pointwise-to-composite:1}. Clearly, we have 
$\Pval_t' \geq \Pval_t$ for all $t \in \NN_0$. Since $(\Pval_t)$ is $\Qcal$-admissible, we indeed have $\Pval_t' = \Pval_t$ for all $t \in \NN_0$, yielding the assertion for p-processes. The assertions for e-processes and sequential tests are shown in the same manner.
\end{proof}

The following corollary describes the restrictions that every admissible construction necessarily satisfies.

\begin{corollary}[Pointwise martingales are necessary for composite admissibility]\label{cor:martingale-building-block-lemma}
Let $\Qcal$ be locally dominated. Then the following statements demonstrate how martingales underpin all admissible constructions.
\begin{enumerate}[label={\rm(\arabic{*})}, ref={\rm(\arabic{*})}] 
    \item\label{cor:martingale-building-block-lemma:1} If $(\Pval_t)$ is $\Qcal$-admissible, then $(\Pval_t)$ is nonincreasing  and $\Pval_t = \esssup_{\Qtt \in \Qcal} \Pval_t^\Qtt$ for all $t$, where $(\Pval_t^\Qtt)$ is a closed $\Qtt$-MM.
    \item\label{cor:martingale-building-block-lemma:2} If $(\Eval_t)$ is $\Qcal$-admissible, then $\Eval_t = \essinf_{\Qtt \in \Qcal} M_t^\Qtt$ for all $t$, where $(M_t^\Qtt)$ is a $\Qtt$-NM with $\EE_\Qtt [M_0^\Qtt]=1$.
    \item\label{cor:martingale-building-block-lemma:3} If $(\psi_t)$ is $\Qcal$-admissible, then $\smash{\psi_t = \essinf_{\Qtt \in \Qcal}\sup_{s \leq t}\1_{M^\Qtt_s \geq 1/\alpha}}$, where $(M_t^\Qtt)$ is a $\Qtt$-NM with $\EE_\Qtt [M_0^\Qtt]=1$, $M^\Qtt_\infty \in \{0,1/\alpha\}$ and $(M^\Qtt_t)$ has no overshoot at level $1/\alpha$.
\end{enumerate}
\end{corollary}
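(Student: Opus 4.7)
The plan is to derive this corollary by stitching together two results that have already been established in the excerpt: the pointwise characterization in Theorem~\ref{thm:pointwise}, which says that admissible $p$-values, $e$-values, and sequential tests against a singleton null are precisely (packaged in) closed max-martingales, nonnegative martingales, and NM-hitting-time tests respectively; and Proposition~\ref{prop:admissible-pointwise-composite-aggregate}, which says that every composite-admissible instrument must be the essential supremum (for $p$-values) or essential infimum (for $e$-values and sequential tests) of pointwise-admissible instruments. So the body of the proof will consist of three short applications of these two results in series.

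For statement~\ref{cor:martingale-building-block-lemma:1}, I would first invoke Proposition~\ref{prop:admissible-pointwise-composite-aggregate}\ref{prop:admissible-pointwise-composite-aggregate:1} to write $\Pval_t = \esssup_{\Qtt\in\Qcal}\Pval_t^\Qtt$ with each $(\Pval_t^\Qtt)$ being $\Qtt$-admissible. Then Theorem~\ref{thm:pointwise}\ref{thm:pointwise:1} tells me that each $(\Pval_t^\Qtt)$ is a closed $\Qtt$-MM (with the uniform-distribution property on its infimum, which I can carry along if desired). The fact that $(\Pval_t)$ itself is nonincreasing comes from replacing $(\Pval_t)$ by $(\inf_{s \le t} \Pval_s)$; this is $\Qcal$-valid (Remark~\ref{rem:running-extremum}) and dominates $(\Pval_t)$, so admissibility forces equality. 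For statement~\ref{cor:martingale-building-block-lemma:2}, I apply Proposition~\ref{prop:admissible-pointwise-composite-aggregate}\ref{prop:admissible-pointwise-composite-aggregate:2} and then Theorem~\ref{thm:pointwise}\ref{thm:pointwise:2} in exactly the same way, obtaining $\Eval_t = \essinf_{\Qtt\in\Qcal} M_t^\Qtt$ where each $(M_t^\Qtt)$ is a $\Qtt$-NM with $\EE_\Qtt[M_0^\Qtt]=1$. For statement~\ref{cor:martingale-building-block-lemma:3}, I combine Proposition~\ref{prop:admissible-pointwise-composite-aggregate}\ref{prop:admissible-pointwise-composite-aggregate:3} with Theorem~\ref{thm:pointwise}\ref{thm:pointwise:3}, which directly gives the representation $\psi_t^\Qtt = \sup_{s\le t} \1_{M_s^\Qtt \ge 1/\alpha}$ with the overshoot and limit conditions on $(M_t^\Qtt)$.

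Because the heavy lifting is done upstream, I do not expect any genuine obstacle in this proof; it is essentially bookkeeping. The one mild subtlety worth flagging is that the pointwise admissibility statement in Proposition~\ref{prop:admissible-pointwise-composite-aggregate} is what passes us from the composite setting to the pointwise one, and it already relies on Proposition~\ref{P:200905} to guarantee that we can actually pick a $\Qtt$-admissible dominator for every $\Qtt\in\Qcal$; the local domination hypothesis on $\Qcal$ is inherited through that step, which is why the hypothesis appears again here. Once that is in place, the three statements follow by chaining the two results, and the proof reduces to three lines per item.
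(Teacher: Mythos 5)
Your proposal is correct and follows exactly the route the paper itself takes: the paper's proof of this corollary is a one-liner stating that the three statements are a direct consequence of Proposition~\ref{prop:admissible-pointwise-composite-aggregate} combined with Theorem~\ref{thm:pointwise}, which is precisely the chaining you carry out. Your extra remark about the nonincreasing property in statement~\ref{cor:martingale-building-block-lemma:1} (via Remark~\ref{rem:running-extremum} and admissibility forcing equality with the running infimum) is also consistent with the argument the paper already uses inside the proof of Theorem~\ref{thm:pointwise}\ref{thm:pointwise:1}.
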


The above statements are a direct consequence of Proposition~\ref{prop:admissible-pointwise-composite-aggregate} and Theorem~\ref{thm:pointwise}.

\subsection{Sufficient conditions for admissible (composite) inference}


The next proposition argues that it suffices to consider only a subset of $\Qcal$ when constructing $\Qcal$-admissible p-processes, e-processes, or sequential tests. Note that we do not require $\Qcal$  to be locally dominated below.
\begin{proposition}[Pointwise-to-composite admissibility]\label{P:200823}
Assume there exists a `reference family' $(\Qtt_i)_{i \in I} \subset \Qcal$ such that
 for each $t$ and $A \in \Fcal_t$, 
\begin{quote}
if there exists $\Qtt \in \Qcal$ with $\Qtt(A) > 0$, then there exists $i \in I$ with $\Qtt_i(A) > 0$. 
\end{quote}
Then we have the following.
\begin{enumerate}[label={\rm(\arabic{*})}, ref={\rm(\arabic{*})}] 
    \item\label{P:200823:1} If $(\Pval_t)$ is a $\Qcal$-p-process and $(\Pval_t)$ is $\Qtt_i$-admissible for each $i \in I$, then $(\Pval_t)$ is $\Qcal$-admissible.
    \item\label{P:200823:2} If $(\Eval_t)$ is a $\Qcal$-e-process and $(\Eval_t)$ is $\Qtt_i$-admissible for each $i \in I$, then $(\Eval_t)$ is $\Qcal$-admissible.
    \item\label{P:200823:3} If $(\psi_t)$ is a $(\Qcal, \alpha)$-ST and $(\psi_t)$ is $\Qtt_i$-admissible for each $i \in I$, then $(\psi_t)$ is $\Qcal$-admissible.
\end{enumerate}
\end{proposition}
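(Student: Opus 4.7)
The plan is to prove all three parts by the same contrapositive argument. Suppose the instrument fails to be $\Qcal$-admissible; then by definition of inadmissibility there is a competing $\Qcal$-valid (respectively $\Qcal$-safe, $(\Qcal,\alpha)$-ST) instrument that dominates it $\Qtt$-a.s.\ for every $\Qtt\in\Qcal$, together with some time $t^\ast$ and some witness $\Qtt^\ast\in\Qcal$ under which the strict domination event $A\in\Fcal_{t^\ast}$ has positive probability. The whole point of the reference family condition is to transfer this witness: it gives an index $i\in I$ with $\Qtt_i(A)>0$. Because $\Qtt_i\in\Qcal$, the almost-sure domination and validity/safety are automatically inherited, so the competing instrument strictly dominates the original one under $\Qtt_i$ itself, contradicting $\Qtt_i$-admissibility.

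Concretely, for part \ref{P:200823:1}, assume $(\Pval_t)$ is $\Qcal$-inadmissible, witnessed by a $\Qcal$-valid $(\Pval_t')$ with $\Qtt(\Pval_t'\le \Pval_t)=1$ for all $t$ and $\Qtt\in\Qcal$, and with $\Qtt^\ast(A)>0$ for $A:=\{\Pval_{t^\ast}'<\Pval_{t^\ast}\}\in\Fcal_{t^\ast}$ and some $\Qtt^\ast\in\Qcal$. The reference family hypothesis applied to $A$ produces $i\in I$ with $\Qtt_i(A)>0$. Since $\Qtt_i\in\Qcal$, $(\Pval_t')$ is in particular $\Qtt_i$-valid, and the domination $\Qtt_i(\Pval_t'\le\Pval_t)=1$ for all $t$ holds by hypothesis. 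Combined with $\Qtt_i(\Pval_{t^\ast}'<\Pval_{t^\ast})>0$, this certifies that $(\Pval_t)$ is not $\Qtt_i$-admissible, a contradiction.

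Parts \ref{P:200823:2} and \ref{P:200823:3} follow the identical template. For \ref{P:200823:2}, replace the almost-sure ordering by $\Eval_t'\ge\Eval_t$ and take $A:=\{\Eval_{t^\ast}'>\Eval_{t^\ast}\}$; for \ref{P:200823:3}, use $\psi_t'\ge\psi_t$ with $A:=\{\psi_{t^\ast}'>\psi_{t^\ast}\}$. In each case, the competing instrument is $\Qcal$-safe (resp.\ a $(\Qcal,\alpha)$-ST), hence automatically $\Qtt_i$-safe (resp.\ a $(\Qtt_i,\alpha)$-ST), and the reference family hypothesis promotes the $\Qtt^\ast$-witness to a $\Qtt_i$-witness. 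In particular, no measurability issue arises because the strict-domination event is $\Fcal_{t^\ast}$-measurable by construction, which is exactly the class of events to which the hypothesis applies.

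I do not anticipate a substantive obstacle here; the proposition is essentially a bookkeeping lemma. The one point worth double-checking is that all three notions of inadmissibility used in the paper are stated as a conjunction of (i) an almost-sure ordering valid under every $\Qtt\in\Qcal$ and (ii) a positive-probability strict ordering on some $\Fcal_t$-measurable event for some single $\Qtt\in\Qcal$. The reference family hypothesis is exactly tailored to transfer (ii), while (i) transfers for free by restriction, so no further technical machinery (such as local domination, essential suprema, or martingale structure) is needed.
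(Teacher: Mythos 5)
Your proof is correct and follows exactly the same contrapositive argument as the paper: assume inadmissibility, extract the $\Fcal_{t^*}$-measurable strict-domination witness event under some $\Qtt^*\in\Qcal$, use the reference-family hypothesis to promote it to a $\Qtt_i$-witness, and contradict $\Qtt_i$-admissibility since $\Qtt_i\in\Qcal$. The only cosmetic difference is that the paper spells out the $e$-value case whereas you spell out the $p$-value case, but the template is identical.
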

\begin{proof}
Let us only argue here the case of e-processes. The other cases follow in exactly the same manner.
    Assume that there exists an e-process $(\Eval_t')$ for $\Qcal$
    such that $\Qtt(\Eval'_t \geq \Eval_t) = 1$ for all $t \in \NN_0$ and all $\Qtt\in\Qcal$, and that $\Qtt^*(\Eval'_t > \Eval_t) > 0$ for some $t \in \NN_0$, and some 
    $\Qtt^*\in\Qcal$. By assumption, there exists some $i \in I$ such that $\Qtt_i(\Eval'_t > \Eval_t)>0$. Since $(\Eval_t)$ is assumed to be $\Qtt_i$-admissible, we get a contradiction. 
\end{proof}

Of course, two special cases are found at the extremes: when the reference family is a singleton, it means there is a common reference measure $\Rtt$, and when the reference family is $\Qcal$ itself, the proposition is vacuous.
The proposition is particularly useful in the first case;
then to get an admissible e-process, for example, it suffices to construct a $\Qcal$-NSM $(M_t)$ that is also a $\Qtt^*$-NM, thanks to Proposition~\ref{P:200823}\ref{P:200823:2} and Theorem~\ref{thm:pointwise}\ref{thm:pointwise:2}. The following example demonstrates one such setting.
\begin{example}
    Recalling notation from Section~\ref{sec:gaussian}, let $\mu^m \in \Gcal^m$ denote the measure under which $(X_t)$ is i.i.d.~Gaussian with unit variance and mean $m$, and consider $\Qcal := \{\mu^m : m \leq 0\}$. 
    Then $G_t := \exp(\sum_{s \leq t} X_s -  t/2)$ is not a $\Qcal$-NM---it is a $\mu^0$-NM when $X_t$ is standard Gaussian, but is a $\mu^m$-NSM for $m < 0$. Nevertheless, $(G_t)$ is a $\Qcal$-admissible e-process. The reason is that $(G_t)$, being a $\mu^0$-NM, is immediately $\mu^0$-admissible, and the singleton reference family $\{\mu^0\}$  satisfies the local absolute continuity condition required to invoke Proposition~\ref{P:200823}\ref{P:200823:2}.
\end{example}

\begin{corollary}[Composite martingales are sufficient for composite admissibility]\label{cor:martingale-suff-composite} 
Consider a general composite family $\Qcal$. 
\begin{enumerate}[label={\rm(\arabic{*})}, ref={\rm(\arabic{*})}] 
    \item\label{cor:martingale-suff-composite:1} 
     $(\Pval_t)$ is $\Qcal$-admissible if it is a closed $\Qcal$-MM 
         and $\inf_{t \in \NN} \Pval_t$ is  $\Qtt$-uniformly distributed for every $\Qtt \in \Qcal$.
    \item\label{cor:martingale-suff-composite:2}  
    $(\Eval_t)$ is $\Qcal$-admissible if it is a $\Qcal$-NM with $\EE_{\Qtt}[\Eval_0] = 1$ for all $\Qtt \in \Qcal$.
    \item\label{cor:martingale-suff-composite:3}  $(\psi_t)$ is $\Qcal$-admissible if $\smash{\psi_t = \sup_{s \leq t} \1_{M_s \geq 1/{\alpha}}}$, where $(M_t)$ is a $\Qcal$-NM with $M_0=1$, $M_\infty\in\{0, 1/\alpha\}$, $\Qtt$-almost-surely, for every $\Qtt \in \Qcal$, and no overshoot at $1/{\alpha}$.
\end{enumerate}
\end{corollary}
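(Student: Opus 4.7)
The plan is to reduce each of the three claims to its pointwise counterpart in Theorem~\ref{thm:pointwise} and then bootstrap from $\Qtt$-admissibility for every $\Qtt \in \Qcal$ to $\Qcal$-admissibility. This is exactly the spirit of Proposition~\ref{P:200823} taken with the trivial choice $I = \Qcal$ as the reference family, so no additional structural hypothesis such as local domination needs to be imposed.

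First, I would observe that each of the three hypotheses has been tailored so that the candidate process satisfies the sufficient conditions of Theorem~\ref{thm:pointwise} under \emph{every} member of $\Qcal$. For part \ref{cor:martingale-suff-composite:2}, a $\Qcal$-NM is by definition a $\Qtt$-NM for each $\Qtt \in \Qcal$, and the normalization $\EE_\Qtt[\Eval_0]=1$ is also assumed pointwise, so Theorem~\ref{thm:pointwise}\ref{thm:pointwise:2} delivers $\Qtt$-admissibility. For part \ref{cor:martingale-suff-composite:1}, a closed $\Qcal$-MM is by definition a closed $\Qtt$-MM for each $\Qtt \in \Qcal$; and when $\inf_{t \in \NN_0} \Pval_t$ is $\Qtt$-uniform, its $\Qtt$-distribution function is the identity on $[0,1]$, so the fixed-point requirement $F(\inf_{t} \Pval_t) = \inf_t \Pval_t$ in Theorem~\ref{thm:pointwise}\ref{thm:pointwise:1} is trivially met under every $\Qtt \in \Qcal$. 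For part \ref{cor:martingale-suff-composite:3}, the requirements that $(M_t)$ be a $\Qtt$-NM with $M_0=1$, $M_\infty \in \{0, 1/\alpha\}$ $\Qtt$-a.s., and no overshoot at $1/\alpha$ are exactly those demanded by Theorem~\ref{thm:pointwise}\ref{thm:pointwise:3}. Hence in all three cases the candidate instrument is $\Qtt$-admissible for every $\Qtt \in \Qcal$.

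Second, I would upgrade pointwise admissibility to composite admissibility by a short contradiction argument; part \ref{cor:martingale-suff-composite:2} serves as a template. Suppose $(\Eval'_t)$ were $\Qcal$-safe with $\Qtt(\Eval'_t \ge \Eval_t) = 1$ for every $t$ and every $\Qtt \in \Qcal$, and with $\Qtt^*(\Eval'_{t^*} > \Eval_{t^*}) > 0$ for some $\Qtt^* \in \Qcal$ and some $t^* \in \NN_0$. Because $\Qtt^* \in \Qcal$, the sequence $(\Eval'_t)$ is in particular $\Qtt^*$-safe and strictly $\Qtt^*$-dominates $(\Eval_t)$, contradicting the $\Qtt^*$-admissibility established in the first step. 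The same template, with inequalities reversed, handles the $p$-value case in \ref{cor:martingale-suff-composite:1}, and the identical pattern with $\psi'_t \ge \psi_t$ handles the sequential-test case in \ref{cor:martingale-suff-composite:3}; in each instance the key observation is that any $\Qcal$-level object is automatically a $\Qtt^*$-level object for each individual $\Qtt^* \in \Qcal$. There is no serious obstacle; the only point requiring care is checking that the hypotheses unambiguously pass through to the pointwise conditions of Theorem~\ref{thm:pointwise}---particularly the translation of $\Qtt$-uniformity of $\inf_t \Pval_t$ into the max-martingale fixed-point identity in \ref{thm:pointwise:1}.
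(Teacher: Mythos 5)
Your proof is correct and follows exactly the paper's route: apply the sufficient directions of Theorem~\ref{thm:pointwise} pointwise at each $\Qtt \in \Qcal$, then lift to $\Qcal$-admissibility via Proposition~\ref{P:200823} with the trivial reference family $I = \Qcal$. Your explicit contradiction argument is precisely the proof of Proposition~\ref{P:200823}, so nothing genuinely new is introduced and no hypothesis is missed.
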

This corollary is again a direct consequence of Proposition~\ref{P:200823} and  Theorem~\ref{thm:pointwise}. Recall that as in Corollary~\ref{C:200906}, a sufficient condition for $(\Pval_t)$  to be a closed $\Qtt$-MM and $\inf_{t \in \NN_0} \Pval_t$ be $\Qtt$-uniformly distributed, for some fixed $\Qtt \in \Qcal$, is that the p-process has the representation $\Pval_t = F^\Qtt(\inf_{s \leq t} 1/ M_s^\Qtt)$, where $(M_t^\Qtt)$ is a $\Qtt$-NM and $\inf_{s \in \NN_0}  1/M_s$ has an atomless distribution function $F^\Qtt$ under $\Qtt$.
    
 As an immediate consequence of Corollary~\ref{cor:martingale-suff-composite}\ref{cor:martingale-suff-composite:2}, recall the Gaussian example in Section~\ref{sec:gaussian}, and consider testing if the underlying mean is zero. Since $(G_t^0)$ is a $\Gcal^0$-NM, it is also a $\Gcal^0$-e-process, and hence a $\Gcal^0$-admissible e-process when testing against, for example, $\Pcal = \bigcup_{m \in \RR} \Gcal^m$.

\subsection{Necessary and sufficient conditions for confidence sequences}\label{subsec:CS}

Proposition~\ref{prop:construct-CS} shows that we can construct a confidence sequence by inverting a family of sequential tests. We now show that their admissibility is also tightly linked to those of the underlying tests. 

\begin{theorem}\label{thm:ST/CS-admissibility}
Recall that $\Pcal^\gamma := \{\Ptt \in \Pcal: \phi(\Ptt) = \gamma\}$.
If $(\psi^\gamma_t)$ is an admissible  $(\Pcal^\gamma,\alpha)$-sequential test for each $\gamma \in \Zcal$ then $\{\gamma \in \Zcal: \psi_t^\gamma=0\}$ is an admissible $(\phi, \Pcal,\alpha)$-confidence sequence.
Similarly, if  $(\CI_t)$ is an admissible $(\phi, \Pcal,\alpha)$-confidence sequence, then $\psi^\gamma_t := \1_{\gamma \notin \CI_t}$ yields an admissible $(\Pcal^\gamma,\alpha)$-sequential test for each $\gamma \in \Zcal$, so that $\CI_t = \bigcup_{\gamma \in \Zcal}  \{\gamma \in \Zcal: \psi^{\gamma}_t = 0\}$.  As a result, we can infer the following.
\begin{enumerate}[label={\rm(\arabic{*})}, ref={\rm(\arabic{*})}] 
     \item\label{thm:ST/CS-admissibility-1} (Validity) If the process $(N_t^\Ptt)$ is upper bounded by a $\Ptt$-NSM with initial expected value one, then $\smash{\CI_t := \bigcup_{\Ptt \in \Pbb}  \{\phi(\Ptt): \sup_{s\le t} N_s^\Ptt < 1/{\alpha}\}}$,  is a $(\phi,\Pbb,\alpha)$-CS.
     \item\label{thm:ST/CS-admissibility-2} (Admissibility)
If  $(M^\gamma_t)$ is a $\Pcal^\gamma$-NM with $M^\gamma_0=1$, $M^\gamma_\infty \in \{0,1/\alpha\}$, $\Ptt$-almost surely, for every $\Ptt \in \Pcal^\gamma$, and has no overshoot at $1/\alpha$, then $\CI_t := \{\gamma \in \Zcal: \sup_{s\leq t} M^\gamma_s < 1/\alpha=0\}$ is $\Pcal$-admissible.
\end{enumerate}

\noindent Assume now that $\Pcal^\gamma$ is locally dominated for each $\gamma$, and that  $(\CI_t)$ is $\Pcal$-admissible.
Then, for all $t \in \NN_0$, we must be able to represent $\smash{\CI_t = \bigcup_{\Ptt \in \Pbb} \{\phi(\Ptt): \psi^\Ptt_t = 0\}}$ where $(\psi^\Ptt_t)$ is $\Ptt$-admissible. Moreover, we can write
 \[\CI_t = \bigcup_{\Ptt \in \Pbb} \left\{\phi(\Ptt): \sup_{s\le t} M_s^\Ptt < \frac{1}{{\alpha}}\right\},\] 
 where $(M_t^\Ptt)$ is a $\Ptt$-NM that has no overshoot at level $1/\alpha$, with $M^\Ptt_0=1$ and $M^\Ptt_\infty \in \{0,1/\alpha\}$.
\end{theorem}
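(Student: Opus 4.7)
The plan is to establish a two-way equivalence between admissibility of $(\CI_t)$ and admissibility of each pointwise sequential test $(\psi^\gamma_t)_{\gamma \in \Zcal}$; parts \ref{thm:ST/CS-admissibility-1} and \ref{thm:ST/CS-admissibility-2} then drop out by combining this equivalence with the pointwise sequential-test characterization of Theorem~\ref{thm:pointwise}\ref{thm:pointwise:3}, and the closing representation follows by further invoking Corollary~\ref{cor:martingale-building-block-lemma}\ref{cor:martingale-building-block-lemma:3}.

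For the forward direction (admissible pointwise tests $\Rightarrow$ admissible CS), validity of $\CI_t := \{\gamma : \psi^\gamma_t = 0\}$ is delivered directly by Proposition~\ref{prop:construct-CS}. For admissibility I would argue by contradiction: a dominating CS $(\CI'_t)$ with $\Ptt^*(\CI'_{t^*} \subsetneq \CI_{t^*}) > 0$ produces some $\gamma^* \in \Zcal$ witnessing $\gamma^* \in \CI_{t^*} \setminus \CI'_{t^*}$ with positive probability, and the standing local-domination convention from Section~\ref{sec:the-4-tools} lets me transport this positive probability to some $\Qtt \in \Pcal^{\gamma^*}$. The induced pointwise test $\psi'^{\gamma^*}_t := \1_{\gamma^* \notin \CI'_t}$ is a $(\Pcal^{\gamma^*},\alpha)$-ST by Proposition~\ref{prop:construct-seq-tests}\ref{prop:construct-seq-tests:3}, dominates $\psi^{\gamma^*}_t$, and strictly exceeds it at $t^*$ with positive $\Qtt$-probability, contradicting $\Pcal^{\gamma^*}$-admissibility of $(\psi^{\gamma^*}_t)$.

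For the reverse direction (admissible CS $\Rightarrow$ admissible pointwise tests), suppose some $(\tilde\psi_t)$ strictly dominates $\psi^{\gamma^*}_t := \1_{\gamma^* \notin \CI_t}$ as a $(\Pcal^{\gamma^*},\alpha)$-ST. The key construction is to shrink $\CI_t$ by removing $\gamma^*$ exactly when $\tilde\psi_t$ fires: $\tilde\CI_t := \CI_t \setminus (\{\gamma^*\} \cap \{\tilde\psi_t = 1\})$. The inequality $\tilde\psi_t \geq \psi^{\gamma^*}_t$ ensures that noncoverage of $\gamma^*$ by $\tilde\CI_t$ coincides with $\{\tilde\psi_t = 1\}$, which still has probability at most $\alpha$ under every $\Ptt \in \Pcal^{\gamma^*}$; coverage at $\gamma \neq \gamma^*$ is untouched. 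Strict domination of $\tilde\psi$ over $\psi^{\gamma^*}$ then yields $\tilde\CI_t \subsetneq \CI_t$ with positive probability, contradicting admissibility of $(\CI_t)$.

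Statement \ref{thm:ST/CS-admissibility-1} follows by a pointwise application of Ville's inequality \eqref{eq:Ville}, since $\{\phi(\Ptt) \notin \CI_t\} \subseteq \{\sup_{s\leq t} N_s^\Ptt \geq 1/\alpha\}$. Statement \ref{thm:ST/CS-admissibility-2} combines Theorem~\ref{thm:pointwise}\ref{thm:pointwise:3}---which makes $(\sup_{s\leq t}\1_{M^\gamma_s \geq 1/\alpha})$ a $\Ptt$-admissible ST for each $\Ptt \in \Pcal^\gamma$, and hence (as a single process) $\Pcal^\gamma$-admissible---with the forward equivalence just established. For the closing representation, the reverse equivalence gives that $(\psi^\gamma_t) := (\1_{\gamma \notin \CI_t})$ is $\Pcal^\gamma$-admissible for every $\gamma$; local domination of each $\Pcal^\gamma$ then permits invoking Corollary~\ref{cor:martingale-building-block-lemma}\ref{cor:martingale-building-block-lemma:3} to write $\psi^\gamma_t = \essinf_{\Ptt \in \Pcal^\gamma} \sup_{s\leq t}\1_{M^\Ptt_s \geq 1/\alpha}$ for $\Ptt$-NMs $(M^\Ptt_t)$ with the stated boundary and overshoot properties, after which unwinding $\CI_t = \{\gamma : \psi^\gamma_t = 0\}$ delivers the union formula. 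The step I expect to require most care is the extraction of the witnessing $\gamma^*$ from the strict-containment event when $\Zcal$ is uncountable, together with correctly interpreting the essinf so that the union representation holds on a fixed almost-sure event rather than merely $\Ptt$-by-$\Ptt$.
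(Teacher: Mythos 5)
Your proposal is correct and follows essentially the same route as the paper's proof: both directions of the CS/ST equivalence are argued by the same contradiction via Propositions~\ref{prop:construct-CS} and~\ref{prop:construct-seq-tests}\ref{prop:construct-seq-tests:3}, statements \ref{thm:ST/CS-admissibility-1}--\ref{thm:ST/CS-admissibility-2} are obtained from the pointwise/composite sufficiency results, and the final representation from Proposition~\ref{prop:admissible-pointwise-composite-aggregate}\ref{prop:admissible-pointwise-composite-aggregate:3} together with Corollary~\ref{cor:martingale-building-block-lemma}\ref{cor:martingale-building-block-lemma:3}. Your explicit construction $\tilde\CI_t := \CI_t$ with $\gamma^*$ removed on $\{\tilde\psi_t=1\}$ merely fleshes out what the paper dismisses as ``follows in exactly the same way,'' and the measurable-selection subtlety you flag for uncountable $\Zcal$ is present (and equally elided) in the paper's own argument.
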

\begin{proof}
For the first statement of the theorem, note that $\CI_t = \{\gamma \in \Zcal: \psi_t^\gamma=0\}$ yields a $(\phi, \Pcal,\alpha)$-confidence sequence by Proposition~\ref{prop:construct-CS}. Suppose for contradiction that $(\CI'_t)$ is another $(\phi, \Pcal,\alpha)$-confidence sequence that witnesses the inadmissibility of $(\CI_t)$. Proposition~\ref{prop:construct-seq-tests}\ref{prop:construct-seq-tests:3} then yields a corresponding family $\{(\eta_t^\gamma)\}_{\gamma \in \Zcal}$ of sequential tests.
The inadmissibility of $(\CI_t)$ then yields some $\gamma \in \Zcal$ such that $(\eta_t^\gamma)$ strictly dominates $(\psi_t^\gamma)$, a contradiction to the assumption that $(\psi^\gamma_t)$ is admissible.
The second statement follows in exactly the same way, again by an application of Propositions~\ref{prop:construct-CS} and \ref{prop:construct-seq-tests}\ref{prop:construct-seq-tests:3}.
Statements~\ref{thm:ST/CS-admissibility-1} and \ref{thm:ST/CS-admissibility-2}  are direct corollaries of combining the first part of the theorem with Corollary~\ref{cor:martingale-sufficiency-lemma}\ref{cor:martingale-sufficiency-lemma:3} and Corollary~\ref{cor:martingale-suff-composite}\ref{cor:martingale-suff-composite:3}.
Assume now that $\Ptt^\gamma$ is locally dominated, for each $\gamma$, and that  $(\CI_t)$ is $\Pcal$-admissible. The statement then follows from Proposition~\ref{prop:admissible-pointwise-composite-aggregate}\ref{prop:admissible-pointwise-composite-aggregate:3} and Corollary~\ref{cor:martingale-building-block-lemma}\ref{cor:martingale-building-block-lemma:3}.
\end{proof}

This and the previous section argued in detail that restricting our attention to constructions based on NMs (not NSMs!) does not hurt us: these are \emph{universal} constructions. Indeed, if one is presented with a p-process, e-process, sequential test, or confidence sequence constructed in some arbitrary fashion, we show that one can always uncover a `hidden' underlying NM, such that applying Ville's inequality or the optional stopping theorem yields an instrument that is at least as good as the original one.


\section{A deeper investigation of admissibility}\label{sec:sufficient}

\subsection{The gap between necessary and sufficient conditions (composite)}

Recall that we were able to crisply summarize the  necessary conditions for admissibility using martingales in Corollary~\ref{cor:martingale-building-block-lemma} and sufficient conditions in Corollary~\ref{cor:martingale-suff-composite}. The following discussion probes at the gap between necessary and sufficient conditions for composite admissibility, in order to demonstrate that the gap is real. We begin with two instructive examples that demonstrate that the necessary conditions of Corollary~\ref{cor:martingale-building-block-lemma} are not actually sufficient for admissibility. 

\begin{example}[Necessary conditions for Corollary~\ref{cor:martingale-building-block-lemma}\ref{cor:martingale-building-block-lemma:1} are not sufficient]
    Assume that $\Qcal$ is the set of probability measures under which $X_1$ is Bernoulli  and 
    $X_2 = X_3 = \ldots = 0$.  
    Consider now
\[
\Pval_1  := \1_{X_1=0}  U + \1_{X_1 =1} \sqrt{U}
\]
and $\Pval_2 = \Pval_3 = \ldots = \Pval_1$. Note that $\Pval_1 \geq U$ by construction and hence is valid. 
Then $(\Pval_t)$ is a p-process and satisfies Corollary~\ref{cor:martingale-building-block-lemma}\ref{cor:martingale-building-block-lemma:1} and Proposition~\ref{prop:admissible-pointwise-composite-aggregate}\ref{prop:admissible-pointwise-composite-aggregate:1}. Here 
\[
    \Pval_1^\Qtt = \Pval_2^\Qtt = \Pval_3^\Qtt = \ldots = F_\Qtt(\Pval_1) = 
        (1-q) \Pval_1 + q \Pval_1^2, 
\]
where $q := \Qtt(X_1 = 1) \in [0,1]$, for each $\Qtt \in \Qcal$ and $F_\Qtt$ is the $\Qtt$-distribution function of $\Pval_1$.  Now observe that by definition $\Pval_1^\Qtt$ is $\Qtt$-uniform and considering small $q$'s it is easy to see that $\Pval_1 = \esssup_{\Qtt \in \Qcal} \Pval_1^\Qtt$. 
However, $(\Pval_t)$ is indeed inadmissible as it is dominated by $(\Pval_t')$ where $\Pval_t'=U$ for all $t \in \NN_0$.
\end{example}

\begin{example}[Necessary conditions for Corollary~\ref{cor:martingale-building-block-lemma}\ref{cor:martingale-building-block-lemma:2} are not sufficient] 
For example,  assume that $\Qcal$ consists of two measures $\Qtt_1$ and $\Qtt_2$. Under both $\Qtt_1$ and $\Qtt_2$, assume the distribution of $X_1$ is standard Gaussian. Furthermore, under $\Qtt_1$ the sequence $(X_2, X_3, \ldots)$ is uniform, while under $\Qtt_2$ the sequence is standard Gaussian. 
Define now two $\Qcal$-martingales $(M^{\Qtt_1}_t)$ and $(M^{\Qtt_2}_t)$ by 
\begin{align} \label{eq:220223.1}
	M^{\Qtt_1}_0 = 1 =  M^{\Qtt_2}_0;\qquad M^{\Qtt_1}_t = \exp\left(X_1 - \frac{1}{2}\right) \qquad  \text{and}  \qquad M^{\Qtt_2}_t = \exp\left(-X_1 - \frac12\right), \qquad t \geq 1.
\end{align}
 Then the e-process $(E_t)$ given by the minimum of $(M^{\Qtt_1}_t)$ and $(M^{\Qtt_2}_t)$ is clearly not admissible as it is strictly bounded from above by the constant process equal to one.    
\end{example}

\subsection{Composite Ville-like anti-concentration bounds}
\label{sec:anti-concentration}

Next, we discuss anti-concentration results, which are somewhat surprisingly insufficient for admissibility.
Since Ville's inequality has been often used in this paper to demonstrate validity, one would hope that if Ville's inequality is `essentially' tight (it holds with `almost' equality) then the corresponding inferential instruments may be close to admissible. We examine this angle next.
Below, we derive an anti-concentration (lower) bound to complement the upper bound of Ville's inequality. 

\begin{lemma}[Anti-concentration for pointwise NMs]\label{lem:anticoncentration-pointwise}
Let $(M_t)$ be a $\Qtt$--NM with $\EE_\Qtt[M_0] = 1$, so that  the `multiplicative increment' $Y_t := M_t/M_{t-1}$ (with $0/0 := 1$) has unit mean. 
Assume that the aggregate empirical variance of $(Y_t)$ is $\Qtt$-almost surely infinite, i.e.,
\begin{align} \label{eq:emp-var}
    \Qtt\left(\sum_{t \in \NN} (Y_t - 1)^2 = \infty \right) = 1. 
\end{align}
Then $M_\infty=0$, $\Qtt$-almost surely.  Fix now some $\varepsilon > 0$.
Assume that for each $t \in \NN_0$ the multiplicative increment  $Y_t$ with $Y_0 := M_0$ satisfies a tail condition, namely for each $\mathcal{F}_{t-1}$-measurable random variable $\beta > 1$ we have
\begin{align} \label{eq:200802} 
\EE_{\Qtt}\left[ Y_t \left| \mathcal{F}_{t-1}, Y_t \geq \beta \right.\right] \leq \beta (1 + \varepsilon).
\end{align}
Here $\Fcal_{-1}$ is the trivial sigma-field by convention.
Then, for any $\alpha \in (0,1]$ we have
\[
\alpha ~\geq~ \Qtt\left(\sup_{t\in \NN_0} M_t \geq \frac{1}{\alpha}\right) ~\geq~ \frac{\alpha}{1+\varepsilon}.
\]
\end{lemma}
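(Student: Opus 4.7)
The plan splits cleanly into three pieces: prove the $a.s.$ degeneration $M_\infty = 0$; establish an $L^1$-identity for $M_\tau$ at the first-crossing time; and finally combine this identity with the tail condition \eqref{eq:200802} to pin down the lower bound. The upper bound on $\Qtt(\sup_t M_t \ge 1/\alpha)$ is just Ville's inequality \eqref{eq:Ville}, so no separate work is needed there.

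For the first piece, I would invoke Doob's convergence theorem to get $M_t \to M_\infty$ $\Qtt$-a.s.\ with $\EE_\Qtt[M_\infty] \le 1$, and then localize. Specifically, let $\sigma_K := \inf\{t: M_t \notin [1/K, K]\}$; the stopped martingale $M^{\sigma_K}$ is bounded, hence $L^2$-convergent, so its predictable quadratic variation $\sum_{s \le \sigma_K} M_{s-1}^2 \, \EE_\Qtt[(Y_s-1)^2 \mid \Fcal_{s-1}]$ is finite a.s., and in turn $\sum_{s\le \sigma_K} M_{s-1}^2 (Y_s-1)^2 < \infty$ a.s.\ by the orthogonality of martingale differences. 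On $\{M_\infty > 0\}$ the limit is both finite and positive, so $M_t$ stays in some random compact subset of $(0,\infty)$ and eventually $\sigma_K = \infty$; bounding $M_{s-1}$ below by $1/K$ then yields $\sum_s (Y_s-1)^2 < \infty$ on $\{M_\infty > 0\}$. The contrapositive of this inclusion, together with \eqref{eq:emp-var}, forces $M_\infty = 0$ $\Qtt$-a.s.

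Next I would introduce $\tau := \inf\{t \in \NN_0 : M_t \ge 1/\alpha\}$, noting that $\{\tau < \infty\} = \{\sup_t M_t \ge 1/\alpha\}$. Optional stopping at the bounded time $t \wedge \tau$ gives $\EE_\Qtt[M_{t\wedge\tau}] = 1$. Split
\[
M_{t \wedge \tau} = M_t \1_{\tau > t} + M_\tau \1_{\tau \le t}.
\]
On $\{\tau > t\}$ one has $M_t < 1/\alpha$ by definition, so the first summand is uniformly bounded; on $\{\tau = \infty\}$ it also tends to $M_\infty = 0$ by the first step. Bounded convergence then yields $\EE_\Qtt[M_t \1_{\tau > t}] \to 0$ and therefore $\EE_\Qtt[M_\tau \1_{\tau < \infty}] = 1$.

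Finally, I would exploit \eqref{eq:200802} to go the other way. On $\{\tau = t\} = \{\tau > t-1\} \cap \{Y_t \ge \beta_t\}$ with the $\Fcal_{t-1}$-measurable threshold $\beta_t := 1/(\alpha M_{t-1}) > 1$, factoring $M_t = M_{t-1} Y_t$ and using the tail bound gives
\[
\EE_\Qtt[M_t \1_{\tau = t} \mid \Fcal_{t-1}] = M_{t-1}\, \EE_\Qtt[Y_t \1_{Y_t \ge \beta_t} \mid \Fcal_{t-1}]\, \1_{\tau > t-1} \le M_{t-1}\beta_t(1+\varepsilon)\, \Qtt(\tau = t \mid \Fcal_{t-1}),
\]
and $M_{t-1}\beta_t = 1/\alpha$. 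Summing over $t$ and taking $\Qtt$-expectations yields $\EE_\Qtt[M_\tau \1_{\tau < \infty}] \le \frac{1+\varepsilon}{\alpha}\, \Qtt(\tau < \infty)$; combined with the identity from the previous step, this gives $\Qtt(\tau < \infty) \ge \alpha/(1+\varepsilon)$. The main subtlety is the degeneration step: without a common dominating measure in sight, the localization via $\sigma_K$ and the eventual $\{\sigma_K = \infty\} \uparrow \{M_\infty > 0\}$ needs to be stated carefully, whereas the overshoot-to-lower-bound conversion is essentially bookkeeping once $\EE_\Qtt[M_\tau \1_{\tau<\infty}] = 1$ is in hand.
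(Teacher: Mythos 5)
Your lower-bound argument coincides with the paper's: stop at $\tau := \inf\{t \in \NN_0 : M_t \ge 1/\alpha\}$, establish $\EE_\Qtt[M_\tau \1_{\tau<\infty}] = 1$ using $M_\infty = 0$ on $\{\tau=\infty\}$, decompose over the events $\{\tau = t\}$, factor $M_t = M_{t-1}Y_t$, and apply \eqref{eq:200802} with the $\Fcal_{t-1}$-measurable threshold $\beta_t = 1/(\alpha M_{t-1})$ so that $M_{t-1}\beta_t = 1/\alpha$. Your derivation of $\EE_\Qtt[M_\tau \1_{\tau<\infty}]=1$ via dominated convergence of $M_t\1_{\tau>t}$ is, if anything, more explicit than the paper's one-line appeal to uniform integrability of the stopped process. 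That part, together with the Ville upper bound, is correct.

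The one genuine gap is in your degeneration step, which the paper does not prove but delegates to an external convergence theorem. You assert that the stopped martingale $M^{\sigma_K}$, with $\sigma_K := \inf\{t: M_t \notin [1/K, K]\}$, is bounded and hence $L^2$-convergent. It is not: the bound $M_t \in [1/K,K]$ is only guaranteed strictly before $\sigma_K$, while the exit value $M_{\sigma_K} = M_{\sigma_K - 1} Y_{\sigma_K}$ can be arbitrarily large, since nothing in the hypotheses controls the overshoot (the increments $Y_t$ need not even be square-integrable). So $M^{\sigma_K}$ need not lie in $L^2$ and the predictable-quadratic-variation argument does not launch as written. The statement you are after---that an $L^1$-bounded martingale has $\sum_t (M_t - M_{t-1})^2 < \infty$ almost surely, whence $\sum_t (Y_t-1)^2 \le (\inf_t M_t)^{-2}\sum_t (M_t-M_{t-1})^2 < \infty$ on $\{M_\infty > 0\} = \{\inf_t M_t > 0\}$, contradicting \eqref{eq:emp-var}---is true, but it is Austin's theorem (or the Larsson--Ruf result the paper cites), whose proof must deal with precisely this overshoot via a truncation of the increments rather than naive boundedness of the stopped process. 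Either invoke such a result directly, as the paper does, or repair the localization; as stated, that step fails even though its conclusion is correct and the rest of your proof goes through once it is in hand.
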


The proof is in Appendix~\ref{sec:proofs}. 
We observe that \eqref{eq:200802} is satisfied, for example, when $\Qtt(Y_t\leq 1 + \varepsilon) = 1$ for all $t \in \mathbb{N}_0$. 
We note that \eqref{eq:emp-var} is easily satisfied when $\Ptt$ is a product measure and thus $(Y_t)$ is a sequence of i.i.d.~random variables, as long as $\Qtt(Y_t \neq 1) > 0$. 
We can now extend the above pointwise result to the composite setting, and once more this result is of independent interest. 

\begin{corollary}[Anti-concentration for bounded composite NMs] \label{C:200724}
Consider a family $\Qcal$ of probability measures and a $\Qcal$-NM $(M_t)$ with $M_0 = 1$. Define $Y_t := M_t/M_{t-1}$ (with $0/0 := 1$) and assume that for each $\varepsilon > 0$ there exists some $\Qtt \in \Qcal$ such that conditions \eqref{eq:emp-var} and \eqref{eq:200802} hold for each $t \in \NN$ and $\mathcal{F}_{t-1}$-measurable random variable $\beta > 1$. Then
\[
    \sup_{\Qtt \in \Qcal} \Qtt\left(\sup_{t\in \NN_0} M_t \geq \frac{1}{\alpha}\right) = \alpha, ~ \text{ for any $\alpha \in (0,1]$. }
\]
\end{corollary}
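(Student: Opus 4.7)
The plan is to sandwich the quantity $\sup_{\Qtt \in \Qcal} \Qtt(\sup_{t \in \NN_0} M_t \geq 1/\alpha)$ between $\alpha$ and $\alpha/(1+\varepsilon)$ for every $\varepsilon > 0$, and then let $\varepsilon \downarrow 0$. Both bounds reduce directly to the pointwise case, since a $\Qcal$-NM is, by definition, a $\Qtt$-NM for each individual $\Qtt \in \Qcal$.

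First, I would dispose of the upper bound. Because $(M_t)$ is a $\Qcal$-NM with $M_0 = 1$, the process is in particular a $\Qtt$-NM with initial expected value one for every $\Qtt \in \Qcal$, and Ville's inequality in the form \eqref{eq:Ville2} gives
\[
\Qtt\left(\sup_{t \in \NN_0} M_t \geq \frac{1}{\alpha}\right) \leq \alpha \qquad \text{for every } \Qtt \in \Qcal.
\]
Taking the supremum over $\Qtt \in \Qcal$ yields the `$\leq \alpha$' half of the claimed equality.

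For the matching lower bound, fix an arbitrary $\varepsilon > 0$. By the hypothesis of the corollary, there exists $\Qtt_\varepsilon \in \Qcal$ under which both \eqref{eq:emp-var} and the tail condition \eqref{eq:200802} (with parameter $\varepsilon$) hold for the increments $Y_t = M_t/M_{t-1}$. Since $(M_t)$ is a $\Qtt_\varepsilon$-NM with $\EE_{\Qtt_\varepsilon}[M_0] = 1$, Lemma~\ref{lem:anticoncentration-pointwise} applies and gives
\[
\Qtt_\varepsilon\left(\sup_{t \in \NN_0} M_t \geq \frac{1}{\alpha}\right) \geq \frac{\alpha}{1+\varepsilon}.
\]
Hence $\sup_{\Qtt \in \Qcal} \Qtt(\sup_{t \in \NN_0} M_t \geq 1/\alpha) \geq \alpha/(1+\varepsilon)$, and letting $\varepsilon \downarrow 0$ completes the proof.

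There is essentially no serious obstacle here: the work has already been done in Lemma~\ref{lem:anticoncentration-pointwise}, and the composite conclusion is obtained simply by choosing, for each prescribed slack $\varepsilon$, a witness measure in $\Qcal$ on which the pointwise lemma applies. The only subtlety worth flagging is that the witness $\Qtt_\varepsilon$ may depend on $\varepsilon$ (in general no single $\Qtt \in \Qcal$ will realize the equality $\alpha$ exactly), which is why the conclusion is phrased as the supremum being attained in the limit rather than at a single distribution.
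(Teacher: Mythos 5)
Your proof is correct and is precisely the argument the paper intends (the corollary is stated without an explicit proof because it is this immediate consequence of Lemma~\ref{lem:anticoncentration-pointwise} together with Ville's inequality): the upper bound from Ville holds uniformly over $\Qcal$, and for each $\varepsilon>0$ the hypothesized witness measure $\Qtt_\varepsilon$ gives the matching lower bound $\alpha/(1+\varepsilon)$, so letting $\varepsilon \downarrow 0$ closes the gap.
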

In words, the above result establishes rather simple and interpretable sufficient conditions under which $\Pval_t := \inf_{s\leq t} 1/M_s$ uses up all its type-I error budget, meaning that at least in a worst-case sense, Ville's inequality did not lead to a conservative test.

Unfortunately, Example~\ref{ex:sym_counter} shows, in the context of conditionally symmetric distributions (see Section~\ref{sec:examples}), that even under the assumptions of the previous corollary, such $(\Pval_t)$ does not need to be admissible. 
This further demonstrates the subtleties of establishing sufficient conditions for admissibility. 
Nevertheless, Corollary~\ref{C:200724} is of independent interest; but the fairly intuitive condition it yields does usually not suffice for admissibility.

\subsection{The role of initial randomization in determining admissibility}\label{sec:sufficient-p}
Throughout this subsection, let us consider $\Qcal = \{\Qtt\}$. 

The admissibility of p-processes is subtle  
and randomization appears to play a key role in enabling admissible constructions. The key difficulty is in dealing with atomic limiting distributions, and we delve more into this topic here with several examples.

In Corollary~\ref{C:200906}, $\inf_{s\in\NN_0} 1/ M_s$ is assumed to have an atomless distribution function $F$. Initial randomization turns out to be necessary for this to hold. To formalize this claim, consider a martingale $(M_t)$. We now argue the following fact:
\[
   \text{If $M_0 =1$, $\Qtt$-almost surely, then $\sup_{t \in \NN_0} M_t$ has an atom at one under $\Qtt$.}
\]
The proof is simple, so we present it immediately. Define $Y_t := M_t/M_{t-1}$ with $0/0:=1$ Without loss of generality, we may assume that $\Qtt(Y_1 \neq 1) > 0$. Since $\EE_\Qtt[Y_1]=1$, there exists some $\eta>0$ such that $\Qtt(Y_1 \leq 1- \eta) > \eta$. On the event $\{Y_1 \leq 1-\eta\}$, the conditional version of Ville's inequality~\eqref{eq:conditional-Ville} yields that $\Qtt(\sup_{t \in \NN} M_t \geq 1|\Fcal_1) \leq M_1 \leq 1-\eta$. Hence on this event we have $\Qtt(\sup_{t \in \NN} M_t < 1|\Fcal_1) \geq \eta$, yielding the unconditional bound $\Qtt(\sup_{t \in \NN} M_t < 1) \geq \eta^2$. This then gives
$\Qtt(\sup_{t \in \NN_0} M_t = 1) \geq \eta^2$. Hence $\sup_{t \in \NN_0} M_t$ has an atom at one, and so does the induced p-process, completing the proof of the aforementioned fact. 

In contrast, if we consider the martingale $(M'_t)$ with randomized initial value  $M_t' :=  M_t + \varepsilon U$, where  $\varepsilon > 0$, and recall that $U$ is the (independent) $\Fcal_0$-measurable $[0,1]$-uniformly distributed random variable, then  $\sup_{t \in \NN_0} M_t' = \sup_{t \in \NN_0} M_t + \varepsilon U$ has a density since it is the convolution of $\sup_{t \in \NN_0} M_t$ with a random variable that has a density. 

Let us consider for the moment a p-process  constructed as $\Pval_t := F(\inf_{s \leq t} 1/ M_s)$, where $(M_t)$ is a $\Qtt$-martingale with $M_0 = 1$ and $F$ is the distribution function of $\inf_{s \in \NN_0} 1/ M_s$. (Note that such a p-process always dominates $(\inf_{s \leq t} 1/ M_s)$.) Then $(\Pval_t)$ is \emph{always} inadmissible. To see this, 
define $\Pval_\infty := \inf_{t \in \NN_0} \Pval_t$ and $\delta := \Qtt(\Pval_\infty = 1) > 0$, where the inequality follows from the fact argued above. Moreover, define the conditional distribution function $G$ by $[0,1] \ni u \mapsto \Qtt(U \leq u | \Pval_\infty = 1)$. 
Let us then define $\Pval_t' := \Pval_t \wedge (1-\delta + \delta G(U))$. Then clearly $(\Pval_t')$ strictly dominates $(\Pval_t)$. Moreover, $(\Pval_t')$ is a p-process since $\Pval_\infty' := \inf_{t \in \NN_0} \Pval_t' = \Pval_\infty \wedge (1-\delta + \delta G(U))$ stochastically dominates a uniform. Indeed, for $\alpha \in (0,  1-\delta)$ we have $\Qtt(\Pval_\infty' \leq \alpha) = \Qtt(\Pval_\infty \leq \alpha) \leq \alpha$ and for $\alpha \in [1-\delta, 1]$ we get
\begin{align*}
    \Qtt(\Pval_\infty' \leq \alpha) &= 
        \Qtt(\Pval_\infty \leq 1- \delta) + \Qtt(\Pval_\infty = 1, 1-\delta + \delta G(U) \leq \alpha)  \\
        &= 1- \delta + \delta \Qtt(\delta G(U) \leq \alpha - (1-\delta) ~|~ \Pval_\infty = 1)\\
        &= 1-\delta + \delta \frac{\alpha - (1-\delta)}{\delta} = \alpha, 
\end{align*}
where we used that $G(U)$ is uniformly distributed under the \smash{conditional measure 
$\Qtt(\cdot|\Pval_\infty = 1)$}.

We note above that atoms at one are `obviously' undesirable for p-processes. Quite surprisingly, \emph{there do exist admissible anytime p-processes with atomic limiting distributions} (where the atoms are not at one); see Example~\ref{eg:atomic-pvalue}. In that example, we have $\Qtt(\inf_{t \in \NN_0} \Pval_\infty < 1/2) = 0$, and $(\Pval_t)$ is independent of the randomization device $U$; nevertheless it is \emph{impossible} to `randomize' the atom.

To end the discussion about atoms in the context of p-processes, we remark that atomic limiting distributions occur more often in discrete time than in continuous time. For example, if $(B_t)_{t \in [0,\infty)}$ is a standard Brownian motion, then $(\exp(B_t - t/2))_{t \in [0,\infty)}$ is a martingale, and $\inf_{t\geq 0} 1/\exp(B_t - t/2)$ is exactly $[0,1]$-uniformly distributed. However, the corresponding standard Gaussian NM from \eqref{eq:Gaussian-NM} has that $\inf_{t\in\NN_0} 1/ G_t^0$ is atomic when $(X_t)$ under $\Qtt$ follow the law of i.i.d.~standard Gaussians.

 In sharp contrast, initial randomization causes sequential tests based on e-processes to become inadmissible.
 Indeed, if the jumps  of $(\Eval_t)$ are continuous with positive probability then the corresponding sequential test is not admissible for \emph{any} $\alpha \in (0, 1)$ due to overshoot. Only e-processes that have atomic jumps can possibly lead to admissible tests; however, any such e-value cannot lead to an admissible test for every $\alpha$ (it will overshoot for some and not for others). 
 Example~\ref{eg:symmetric-sufficient-test} in the next section derives an admissible sequential test for (composite) symmetric distributions.

{
\color{black}
\section{SubGaussian admissibility} \label{sec:subG}





For a nontrivial example of an admissible e-process, consider the family $\Qcal \equiv \Gcal^\downarrow_0$ of centered conditionally 1-subGaussian laws (the proofs for subGaussian constant $\sigma \neq 1$ follow analogously).
Recalling Section~\ref{sec:subgaussian-defn}, $\Qcal$ consists of those laws $\Qtt$ such that the conditional moment generating function of $X_t$ satisfies $\EE_\Qtt[e^{\lambda X_t}|\Fcal_{t-1}] \leq e^{\lambda^2/2}$ for all $t \in \NN$ and all $\lambda \in \mathbb R$.  This is an important null hypothesis, studied by \citet{darling_confidence_1967,  Robbins_statistical_1970,howard_uniform_2019}.    Consider now the process $(\Eval_t)$ given by 
\begin{align} \label{eq:220222}
	\Eval_t = e^{\sum_{i=1}^t X_i - t/2}, \qquad t \in \NN_0.
\end{align}
Then $(\Eval_t)$ is a $\Qcal$-NSM as discussed in Section~\ref{sec:subgaussian-defn}, and thus a $\Qcal$-e-process. (The arguments in this section can be easily adapted to hold for the process $N$ in~\eqref{eq:subG-NSM2}, so we just consider $\lambda=1$ and subGaussian constant $\sigma=1$ for simplicity of exposition.)


\begin{theorem}\label{P_subGaussian_admissible}
The e-process \eqref{eq:220222} is admissible for $\Qcal$.
\end{theorem}

The main difficulty in proving the above theorem is handling both continuous and discrete distributions. Indeed, consider first the following (simpler) proposition about $\Qcal^\sim \subset \Qcal$, the set of all distributions within the 1-subGaussian null hypothesis that have Lebesgue densities up to any finite time, and $\Qcal^{\times} \subset \Qcal^\sim$, the subclass of i.i.d.\ 1-subGaussian distributions with Lebesgue densities.

\begin{proposition}\label{prop:subG-cont}
The e-process \eqref{eq:220222} is admissible for $\Qcal^\sim$ and $\Qcal^\times$.
\end{proposition}
The proof of the above proposition is simple and works as follows. Note that~\eqref{eq:220222} is a $\Qtt_0$-NM, where (as in the following proof) $\Qtt_0$ is the product measure of i.i.d.~standard Gaussians. Thus it is immediately $\Qtt_0$-admissible, and the singleton reference family $\{\Qtt_0\}$  satisfies the local absolute continuity condition required to invoke Proposition~\ref{P:200823}\ref{P:200823:2} for both $\Qcal^\times$ and $\Qcal^\sim$.

However, Proposition~\ref{prop:subG-cont} (or an amendment of its short proof) does not imply Theorem~\ref{P_subGaussian_admissible} because $\Qcal$ contains many laws that are singular with respect to $\Qtt_0$ on each $\Fcal_t$, and so we cannot use Proposition~\ref{P:200823}\ref{P:200823:2} to deduce $\Qcal$-admissibility. In fact, $\Qcal$ contains uncountably many mutually singular laws.


Despite the fact that \eqref{eq:220222} is admissible for $\Qcal$ and $\Qcal^\times$, the example below demonstrates that it is not admissible for a nested set in between them. This further demonstrates the subtlety in proving subGaussian admissibility, created by the uncountably many mutually singular laws. 

\begin{example}
Let $\mathcal{M}$ contain all 1-subGaussian distributions with a Lebesgue density, along with one discrete 1-subGaussian distribution: a symmetric Rademacher distribution (that places equal mass at $\pm 1$), and let $\Pcal=\{\mu^\infty: \mu \in \mathcal{M}\}$ so that $\Qcal \supset \Pcal \supset \Qcal^\times$. Clearly, $(\Eval_t)$ from~\eqref{eq:220222} is still a $\Pcal$-NSM, and Theorem~\ref{P_subGaussian_admissible} and Proposition~\ref{prop:subG-cont} claimed it was admissible for $\Qcal^\times$ and $\Qcal$. However, $(\Eval_t)$  is inadmissible for $\Pcal$! 
Indeed, define
$\tilde \Eval_t := e^{\sum_{i=1}^t X_i - t \log \cosh 1 }$, and
$\Eval'_t =  \Eval_t+ \1_{\{|X_1|=1\}}(\tilde \Eval_t-\Eval_t)$, or more explicitly
\[
\Eval'_t = \begin{cases}
\tilde \Eval_t \text{ if } |X_1|=1\\
\Eval_t \text{ otherwise}
\end{cases}.
\]
Note that $e^{X - \ln \cosh 1}$ has mean equal to 1 under the Rademacher distribution, so $(\tilde \Eval_t)$ is a nonnegative martingale under the i.i.d.\ Rademacher distribution (but not under any other distribution in $\Pcal$), and since $\ln \cosh 1 < 1/2$, we have $\tilde \Eval_t > \Eval_t$ for all $t$. Since the event $\{|X_1|=1\}$ has Lebesgue measure zero,  $(\Eval'_t)$ is a $\Pcal$-NSM. Further, $\Eval'_t \geq \Eval_t$ by definition, and $\Eval'_t > \Eval_t$ for all $t$ with positive probability under the Rademacher distribution. This makes $\Eval_t$ from \eqref{eq:220222} inadmissible for~$\Pcal$.

It is worth noting that this example does not contradict Proposition~\ref{prop_adm_superset_proof}. Indeed, this proposition is not applicable here because $\Pcal$ has fewer polar sets than $\Qcal^\times$. In particular, in the above example we have $\Eval'_t = \Eval_t$ up to $\Qcal^\times$-polar sets, but not up to $\Pcal$-polar sets. Regarding the relation to the larger set $\Qcal$, $(\Eval'_t)$ cannot be used to witness inadmissibility of $(\Eval_t)$ with respect $\Qcal$, because $(\Eval'_t)$ is not even a $\Qcal$-e-process. \end{example}

Before giving the proof of Theorem~\ref{P_subGaussian_admissible}, we present the following construction of `almost Gaussian' 1-subGaussian random variables with an atom. We write it as a lemma, since it may be of independent interest outside this paper.

\begin{lemma}[Perturbing a Gaussian into a subGaussian with one atom]
Given a standard Gaussian $X$, any point $\bar x \in \RR$ and tolerance $\varepsilon>0$, one can always construct a random variable $X^*$ such that:
\begin{itemize}
\item There is an $\varepsilon$-sized interval $(a,b)$ containing $\bar x$, such that $\EE[X \mid X \in (a,b)] = \bar x$.
\item Outside of $(a,b)$ we have $X^*=X$, and inside of $(a,b)$, $X^*$ has an atom at $\bar x$ but no other mass.
\item $X^*$ has zero mean and is 1-subGaussian.
\end{itemize}
\end{lemma}
\begin{proof}
Let $X$ be a standard normal random variable. For any $\bar x \in \RR$ and $\varepsilon > 0$, we may choose $a < \bar x < b$ with $b-a = \varepsilon$ such that
\[
\EE[X \mid X \in (a,b)] = \bar x.
\]
To see this we examine the function $h(a) = \EE[X \mid X \in (a,a+\varepsilon)]$. This function is continuous and satisfies $h(\bar x-\varepsilon) < \bar x < h(\bar x)$. Thus by the intermediate value theorem we can choose $a \in (\bar x - \varepsilon, \bar x)$ such that $h(a) = \bar x$, and then set $b = a+\varepsilon$. Now, the random variable
\[
X^* = \EE[X \mid X\1_{\{X \notin (a,b)\}}] = \begin{cases}
X & \text{if } X \notin (a,b) \\
\bar x & \text{otherwise}
\end{cases}
\]
is 1-subGaussian. Indeed, Jensen's inequality, the tower rule, and the fact that $X$ is standard normal give
\[
\EE[e^{\lambda X^*}] = \EE[e^{\EE[\lambda X \mid X\1_{\{X \notin (a,b)\}}]}] \le \EE[e^{\lambda X}] = e^{\lambda^2/2}, \qquad \lambda \in \RR.
\]
This concludes the proof.
\end{proof}

The above construction plays a crucial role in the following proof of the  main theorem.

\begin{proof}[Proof of Theorem~\ref{P_subGaussian_admissible}]
Let $(\Eval_t')$ be an e-process with 
\begin{equation}\label{eq:running-assumption-v2}
\text{$\Qtt(\Eval_t' \ge \Eval_t) = 1$ for all $\Qtt \in \Qcal$ and $t \in \NN_0$.}
\end{equation}
In order to show that $(\Eval_t)$ is admissible, we must show that the inequality in \eqref{eq:running-assumption-v2} is, in fact, an equality.

Fix $t \in \NN_0$ and assume that $\Eval'_t$ does not involve any randomization. Thus $\Eval'_t = g(X_1,\ldots,X_t)$ for some measurable function $g$. We also have $\Eval_t = f(X_1)\cdots f(X_t)$ where $f(x) = e^{x-1/2}$. Assume for contradiction that there exist $\bar x_1,\ldots,\bar x_t \in \RR$ such that
\begin{equation}\label{eq_adm_proof_v2_0}
\delta = g(\bar x_1,\ldots,\bar x_t) - f(\bar x_1) \cdots f(\bar x_t) > 0.
\end{equation}
We will use this to construct $\Qtt^* \in \Qcal$ such that $\EE_{\Qtt^*}[\Eval'_t] > 1$, which is a contradiction. To this end, let $\Qtt_0$ be the law under which the data $(X_t)$ is i.i.d.\ standard normal. Fix $\varepsilon > 0$ and define
\[
\varepsilon_i = \varepsilon^{2^{t-i}}, \quad i = 0,\ldots,t.
\]
For each $i=1,\ldots,t$ we apply the construction before the proof with $X=X_i$, $\bar x = \bar x_i$, $\varepsilon = \varepsilon_i$ to obtain a 1-subGaussian random variable $X^*_i$ and an $\varepsilon_i$-sized interval $(a_i,b_i)$ containing $\bar x_i$ (the atom). We also define the event $A_i = \{X_i \in (a_i,b_i)\}$ and observe that we have $A_i = \{X^*_i = \bar x_i\}$.

We now define a law $\Qtt^*$ under which the observations $X_1,X_2,\ldots$ behave as follows. The first observation is distributed like $X_1^*$. If the atom $\bar x_1$ is \emph{not} realized, the remaining observations are i.i.d.\ standard normal. If the atom \emph{is} realized, the second observation is distributed like $X_2^*$. If the atom $\bar x_2$ is not realized, the remaining observations are i.i.d.\ standard normal. Otherwise, the third observation is distributed like $X^*_3$, and so on. We continue like this until time $t$. After that the data is i.i.d.\ standard normal. At each stage the conditional distribution of the subsequent observation is 1-subGaussian, and hence $\Qtt^*$ belongs to $\Qcal$.

Our goal is now to prove by (backward) induction that for $k=0,\ldots,t$ we have
\begin{equation}\label{eq_adm_proof_v2_1}
\EE_{\Qtt^*}\Big[ \Eval'_t \mid \bigcap_{i \le k} A_i \Big] \ge \delta \, \Omega\left( \frac{\varepsilon_k}{\varepsilon} \right) + \prod_{i\le k} f(\bar x_i) - O(\varepsilon_k).
\end{equation}
Above, we use the Knuth and Landau asymptotic notation $\Omega(h(\varepsilon)) \ge c h(\varepsilon)$ and $O(h(\varepsilon)) \le c h(\varepsilon)$ for some positive constant $c > 0$ that does not depend on $\varepsilon$. For $k=0$ the empty product $\prod_{i \le 0} f(\bar x_i)$ is understood to equal one. The empty intersection $\bigcap_{i \le 0} A_i$ equals the full sample space, which reduces the conditional expectation to an unconditional expectation.

For $k=t$ it is clear that \eqref{eq_adm_proof_v2_1} holds thanks to \eqref{eq_adm_proof_v2_0} because $\varepsilon_t / \varepsilon = 1$ and
\[
\EE_{\Qtt^*}\Big[ \Eval'_t \mid \bigcap_{i \le t} A_i \Big] = \EE_{\Qtt^*}\Big[ g(X_1,\ldots X_t) \mid \bigcap_{i \le t} A_i \Big] = g(\bar x_1,\ldots,\bar x_t).
\]
This establishes the base case. Suppose now that \eqref{eq_adm_proof_v2_1} has been proved for some $k \in \{1,\ldots,t\}$. We will show that it holds for $k-1$ as well. Using the definition of conditional expectation and the fact that $\Eval'_t \ge \Eval_t$, $\Qtt^*$-almost surely, we compute
\begin{equation}\label{eq_adm_proof_v2_2}
\begin{aligned}
\EE_{\Qtt^*}\Big[ \Eval'_t \mid \bigcap_{i \le k-1} A_i \Big]
&= \EE_{\Qtt^*}\Big[ \Eval'_t \1_{A_k} \mid \bigcap_{i \le k-1} A_i \Big] + \EE_{\Qtt^*}\Big[ \Eval'_t \1_{A_k^c} \mid \bigcap_{i \le k-1} A_i \Big] \\
&\ge \EE_{\Qtt^*}\Big[ \Eval'_t \mid \bigcap_{i \le k} A_i \Big] \Qtt^*\Big(A_k \mid \bigcap_{i \le k-1} A_i \Big) + \EE_{\Qtt^*}\Big[ \Eval_t \1_{A_k^c} \mid \bigcap_{i \le k-1} A_i \Big].
\end{aligned}
\end{equation}
We develop the terms on the right-hand side. First,
\[
\Qtt^*\Big(A_k \mid \bigcap_{i \le k-1} A_i \Big) = \Qtt_0(A_k).
\]
Moreover, using that $\Eval_t = \prod_{i \le t} f(X_i)$, the definition of $\Qtt^*$, and that $\EE_{\Qtt_0}[f(X_i)]=1$ for all $i$, we get
\begin{align*}
\EE_{\Qtt^*}\Big[ \Eval_t \1_{A_k^c} \mid \bigcap_{i \le k-1} A_i \Big] &= \prod_{i \le k-1} f(\bar x_i) \ \EE_{\Qtt^*}\Big[ f(X_k) \1_{A_k^c} \prod_{i > k} f(X_i) \mid \bigcap_{i \le k-1} A_i \Big] \\
&= \prod_{i \le k-1} f(\bar x_i) \ \EE_{\Qtt_0}\Big[ f(X_k) \1_{A_k^c} \prod_{i > k} f(X_i) \Big] \\
&= \prod_{i \le k-1} f(\bar x_i) \ \EE_{\Qtt_0}[ f(X_k) \1_{A_k^c}] \prod_{i > k} \EE_{\Qtt_0}[f(X_i)] \\
&= \prod_{i \le k-1} f(\bar x_i) \ \left(1 - \EE_{\Qtt_0}[ f(X_k) \1_{A_k}] \right) \\
&= \prod_{i \le k-1} f(\bar x_i) \ \left(1 - \EE_{\Qtt_0}[ f(X_k) \mid A_k] \Qtt_0(A_k) \right).
\end{align*}
Substituting these expressions into \eqref{eq_adm_proof_v2_2}, using the induction assumption \eqref{eq_adm_proof_v2_1}, and rearranging the resulting terms, we get
\begin{align}
\EE_{\Qtt^*}\Big[ \Eval'_t \mid \bigcap_{i \le k-1} A_i \Big]
&\ge \left( \delta \, \Omega\left( \frac{\varepsilon_k}{\varepsilon} \right) + \prod_{i\le k} f(\bar x_i) - O(\varepsilon_k) \right) \Qtt_0(A_k) \nonumber \\
&\quad + \prod_{i \le k-1} f(\bar x_i) \ \left(1 - \EE_{\Qtt_0}[ f(X_k) \mid A_k] \Qtt_0(A_k) \right) \\
&= \delta \, \Omega\left( \frac{\varepsilon_k}{\varepsilon} \right) \Qtt_0(A_k) - O(\varepsilon_k) \Qtt_0(A_k) \nonumber \\
&\quad + \prod_{i \le k-1} f(\bar x_i) - \prod_{i \le k-1} f(\bar x_i) \ \EE_{\Qtt_0}[ f(X_k) - f(\bar x_k) \mid A_k] \Qtt_0(A_k).
\end{align}
Note that these computations are valid even when $k=1$, once the meaning of empty intersections and products have been properly accounted for. Since $\Qtt_0(A_k) = \Qtt_0(X_k \in (a_k,b_k))$ with $X_k$ standard normal under $\Qtt_0$, we have that $\Qtt_0(A_k)$ is both $O(\varepsilon_k)$ and $\Omega(\varepsilon_k)$. Moreover, since the derivative of $f$ is bounded on $(a_k,b_k)$, we have that $\EE_{\Qtt_0}[ f(X_k) - f(\bar x_k) \mid A_k]$ is $O(\varepsilon_k)$. As a result, the right-hand side above can be written
\[
\delta \, \Omega\left( \frac{\varepsilon_k^2}{\varepsilon} \right) - O(\varepsilon_k^2) + \prod_{i \le k-1} f(\bar x_i) - O(\varepsilon_k^2).
\]
Thus, noting that
\[
\varepsilon_k^2 = \left( \varepsilon^{2^{t-k}} \right)^2 = \varepsilon^{2^{t-k+1}} = \varepsilon_{k-1},
\]
we conclude that
\[
\EE_{\Qtt^*}\Big[ \Eval'_t \mid \bigcap_{i \le k-1} A_i \Big] \ge \delta \, \Omega\left( \frac{\varepsilon_{k-1}}{\varepsilon} \right) + \prod_{i \le k-1} f(\bar x_i) - O(\varepsilon_{k-1}).
\]
This shows that \eqref{eq_adm_proof_v2_1} holds with $k-1$ in place of $k$, and it follows by induction that \eqref{eq_adm_proof_v2_1} holds for all $k \in \{0,\ldots,t\}$.

We now apply \eqref{eq_adm_proof_v2_1} with $k=0$ to get
\begin{equation} \label{eq_adm_proof_no_rand_conclusion}
\EE_{\Qtt^*}[ \Eval'_t ] \ge \delta\ \Omega( \varepsilon^{2^t-1} ) + 1 - O(\varepsilon^{2^t}).
\end{equation}
Thus, thanks to \eqref{eq_adm_proof_v2_0}, by choosing $\varepsilon>0$ sufficiently small we can ensure that $\EE_{\Qtt^*}[ \Eval'_t ] > 1$. This contradicts validity of $(\Eval'_t)$ and shows that \eqref{eq_adm_proof_v2_0} is impossible. We conclude that $(\Eval_t)$ is admissible.

\smallskip

We have proved the proposition in the case where no randomization is present. If there is randomization, so that $\Eval'_t = g(X_1,\ldots,X_t,U)$ involves the uniformly distributed independent randomization device $U$, the contradiction assumption \eqref{eq_adm_proof_v2_0} is replaced by the following: there exist $\bar x_1,\ldots,\bar x_t \in \RR$, $\delta' > 0$, and a subset $I \subset [0,1]$ of positive Lebesgue measure such that
\[
g(\bar x_1,\ldots,\bar x_t,u) \ge \delta' + f(\bar x_1)\cdots f(\bar x_t) \text{ for all } u \in I.
\]
Constructing $\Qtt^*$ as above (with the additional property that $U$ is uniformly distributed and independent of the data) we obtain by the same arguments that
\[
\EE_{\Qtt^*}[\Eval'_t] = \EE_{\Qtt^*}[\Eval'_t \1_I(U)] + \EE_{\Qtt^*}[\Eval'_t \1_{I^c}(U)]
\ge \Big( \delta' \, \Omega( \varepsilon^{2^t-1} ) + 1 - O(\varepsilon^{2^t}) \Big) |I| + \EE_{\Qtt^*}[\Eval_t] (1-|I|),
\]
where $|I| = \Qtt^*(U \in I) > 0$ denotes the Lebesgue measure of $I$. We reach the desired contradiction if the right-hand side is strictly greater than one. This will be the case for $\varepsilon$ small enough, provided that
\[
\EE_{\Qtt^*}[\Eval_t] = 1 - O(\varepsilon^{2^t}).
\]
But this again follows from the same argument without randomization presented earlier, applied with $\Eval'_t$ replaced by $\Eval_t$; indeed, this yields \eqref{eq_adm_proof_no_rand_conclusion} with the left-hand side replaced by $\EE_{\Qtt^*}[\Eval_t]$ and $\delta=0$ on the right-hand side. This completes the proof in the presence of randomization.
\end{proof}

}

Invoking Proposition~\ref{prop_adm_superset_proof}, we note that admissibility for $\Gcal_{0}^\downarrow$ implies admissibility for $\Gcal_{-}^\downarrow$ and $\Gcal_{\sim}^\downarrow$ as well. Despite the substantial progress made above, it remains unclear to us whether \eqref{eq:220222} is admissible for the class of all nonatomic 1-subGaussian distributions, that is continuous distributions which may or may not have densities with respect to the Lebesgue measure. This class is a superset of 1-subGaussian distributions with Lebesgue densities, but a subset of all 1-subGaussian distributions --- despite~\eqref{eq:220222} being admissible for the subset and superset, we cannot immediately infer admissibility for the sandwiched set. 
To clarify, there do exist nonatomic 1-subGaussian distributions that do not have Lebesgue densities: for example, consider the symmetric distribution on $[-1,1]$ obtained by symmetrizing the Cantor distribution. 
We do conjecture that admissibility continues to hold, but a formal proof evades us at the present moment.

\section{Admissible inference for symmetric distributions}\label{sec:examples}

Considering the results of this paper presented thus far, we demonstrate how they may inform practice. At a high level, this section constructs (using different NMs) admissible version of all four instruments studied in this paper for the class of conditionally symmetric distributions.

We return to the example from Section~\ref{sec:symmetric-NSM} where we had presented a rather elegant, and intuitive, exponential NSM for distributions that yield conditionally symmetric observations. However, the results following the example showed that the tests or confidence sequences stemming from it are inadmissible, since all admissible constructions must use NMs. We will construct such NMs, which appear to be new to the best of our knowledge (but we would not be surprised if they have been discovered before). 
Recalling the notation from Section~\ref{sec:symmetric-NSM}, let $\Scal := \Scal^0$ be the set of laws such that $X_t$ conditional on $\mathcal F_{t-1}$ is symmetric around zero. 
We will demonstrate here that inference based on $(S^{0}_t)$ defined in \eqref{eq:symmetric-NSM} is inadmissible by explicitly constructing procedures that dominate it. 

We note that $\Scal$ is \emph{not} locally dominated. Indeed, just consider $\Ptt \in \Scal$ of the form $\Ptt= \Utt \times \mu^\infty$ (where $\Utt$ denotes the uniform measure), and note that there are uncountably many mutually singular choices for $\mu$; take for instance $\mu = (\delta_x+\delta_{-x})/2$ for $x\in\RR$. Here $\delta_x$ denotes the Dirac measure at $x$.
Nevertheless, despite the lack of a reference measure, it is still possible to construct a family of $\Scal$-NMs, and thus admissible  e-values for $\Scal$. Indeed, we have the following proposition.

\begin{proposition} \label{prop:symmetry}
An adapted process $(M_t)$ with $M_0$ bounded and nonnegative is an $\Scal$-NM  if and only if  $Y_t := M_t/M_{t-1}$ (with $0/0 := 1$) is of the form $Y_t = f_t(X_t)$, where $(f_t)$ is a nonnegative predictable function such that $f_t-1$ is odd, or equivalently, $f_t(x)+f_t(-x)=2$ for all $x \in \RR$. Moreover, if $M_0 = 1$ then $(M_t)$ is an $\Scal$-admissible e-value by Corollary~\ref{cor:martingale-suff-composite}\ref{cor:martingale-suff-composite:2}. 
\end{proposition}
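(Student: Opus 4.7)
The plan is to prove both directions of the equivalence, after which the final assertion (that $(M_t)$ is $\Scal$-admissible when $M_0=1$) is immediate from Corollary~\ref{cor:martingale-suff-composite}\ref{cor:martingale-suff-composite:2}, since $\EE_\Qtt[M_0]=1$ for every $\Qtt\in\Scal$.

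For the sufficiency direction $(\Leftarrow)$, assume $Y_t = f_t(X_t)$ with $f_t$ predictable, nonnegative, and $f_t(x)+f_t(-x)=2$ for all $x$. Fix $\Ptt\in\Scal$ and $t\ge 1$. Since the conditional law of $X_t$ given $\Fcal_{t-1}$ is symmetric around zero, the $\Fcal_{t-1}$-conditional expectation of any (predictably parametrized) odd function of $X_t$ vanishes; applying this to the odd function $f_t-1$ yields $\EE_\Ptt[f_t(X_t)\mid\Fcal_{t-1}]=1$. Multiplying by the $\Fcal_{t-1}$-measurable factor $M_{t-1}$ gives $\EE_\Ptt[M_t\mid\Fcal_{t-1}] = M_{t-1}$. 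Integrability is inherited inductively: $M_0$ is bounded hence in $L^1(\Ptt)$, and the identity $\EE_\Ptt[M_t]=\EE_\Ptt[M_0]$ together with nonnegativity keeps every later term integrable, so $(M_t)$ is an $\Scal$-NM.

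For the necessity direction $(\Rightarrow)$, assume $(M_t)$ is an $\Scal$-NM. Since $Y_t$ is $\Fcal_t$-measurable and $\Fcal_t = \Fcal_{t-1}\vee\sigma(X_t)$, a standard Doob--Dynkin argument produces a jointly measurable, nonnegative function $(x,\omega_{t-1})\mapsto f_t(x;\omega_{t-1})$ with $Y_t = f_t(X_t)$ and $f_t$ predictable in $\omega_{t-1}$. Decompose $f_t$ in the $x$-variable into its even part $\bar f_t(x):=\tfrac12(f_t(x)+f_t(-x))$ and its odd part $\check f_t(x):=\tfrac12(f_t(x)-f_t(-x))$. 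Because $X_t\mid\Fcal_{t-1}$ is symmetric under every $\Ptt\in\Scal$, one has $\EE_\Ptt[\check f_t(X_t)\mid\Fcal_{t-1}]=0$, so the NM property reduces on $\{M_{t-1}>0\}$ to $\EE_\Ptt[\bar f_t(X_t)\mid\Fcal_{t-1}]=1$ for every $\Ptt\in\Scal$.

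To upgrade this integral identity to the pointwise statement $\bar f_t\equiv 1$, I exploit the size of $\Scal$. For any positive bounded predictable process $(a_s)$, the measure under which each $X_s\mid\Fcal_{s-1}$ has the two-point symmetric law $(\delta_{-a_s}+\delta_{a_s})/2$ lies in $\Scal$; under such a measure $|X_t|=a_t$ almost surely, so $\bar f_t(X_t)=\bar f_t(a_t)$ by evenness, and the reduced NM condition forces $\bar f_t(a_t;\omega_{t-1})=1$ on the support of this measure. Choosing $(a_s)$ to concentrate on any prescribed past trajectory and letting the terminal value $a_t$ range over a countable dense subset of $(0,\infty)$, one deduces $\bar f_t(a;\omega_{t-1})=1$ for all rational $a>0$ and for $\Ptt$-almost every $\omega_{t-1}$, for every $\Ptt\in\Scal$. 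A null-set modification---redefining $f_t$ to be $1+\check f_t$ wherever the identity fails---yields a predictable representative with $f_t(x)+f_t(-x)=2$ for every $x\in\RR$, as required. The main obstacle is precisely this final measure-theoretic step: each two-point $\Scal$-measure only certifies the identity on its own support of past trajectories, so a careful countable-dense argument in the $x$-variable, combined with a sufficiently rich family of predictable two-point measures that jointly cover all admissible pasts, is needed to patch the constraints together.
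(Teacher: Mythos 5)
Your sufficiency direction and your use of two-point symmetric laws for necessity are exactly the paper's strategy (the paper skips the explicit even/odd decomposition and works directly with $\tfrac12(f_t(x)+f_t(-x))=1$, but that is cosmetic). The final assertion via Corollary~\ref{cor:martingale-suff-composite}\ref{cor:martingale-suff-composite:2} is also handled identically.

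However, the last step of your necessity argument — the one you yourself flag as "the main obstacle" — is genuinely incomplete as written, and the paper resolves it differently. First, letting $a_t$ range over a countable dense subset of $(0,\infty)$ only yields $\bar f_t(a)=1$ for those $a$; to pass to all real $a$ you would need continuity of $\bar f_t$ in $x$, which is not assumed. Second, the proposed "null-set modification" (redefining $f_t$ as $1+\check f_t$ wherever $\bar f_t\neq 1$) is not justified: you have only shown $\EE_\Ptt[\bar f_t(X_t)\mid\Fcal_{t-1}]=1$, which does not imply $\bar f_t(X_t)=1$ $\Ptt$-almost surely (e.g.\ a symmetric conditional law putting equal mass on $\{\pm 1,\pm 2\}$ with $\bar f_t(1)=0$, $\bar f_t(2)=2$ satisfies the integral identity while $\bar f_t(X_t)\ne 1$ a.s.), so the modification could change $Y_t$, and hence $M_t$, on a set of positive probability under some $\Ptt\in\Scal$. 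One can in fact prove $\bar f_t(X_t)=1$ $\Ptt$-a.s.\ for every $\Ptt\in\Scal$ by a conditioning argument within $\Scal$, but that requires an extra step you have not supplied. The paper's route avoids all of this: Doob--Dynkin gives a single fixed Borel function with $Y_t=f_t(U,X_1,\dots,X_{t-1};X_t)$ pointwise on the canonical space, and then for \emph{each} tuple $(x_1,\dots,x_t)\in\RR^t$ one uses the deterministic product measure $\Utt\times\prod_s(\delta_{-x_s}+\delta_{x_s})/2\in\Scal$, under which the atom $\{X_i=x_i,\ i\le t-1\}$ has positive probability; evaluating the martingale identity on that atom pins down $f_t(\cdot\,;x_t)+f_t(\cdot\,;-x_t)=2$ at that tuple. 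Since a separate certifying measure is used for each tuple and the function is fixed, the identity holds for all real arguments with no countable-dense approximation and no patching. I recommend replacing your final paragraph with this one-measure-per-point argument.
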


The proof is in Section~\ref{sec:proofs}. 
A similar characterization as above also holds for any $\Scal$-NSM;
in the notation of Proposition~\ref{prop:symmetry}, $(M_t)$ is an $\Scal$-NSM if and only if $Y_t=f_t(X_t)$ where $(f_t)$ is a nonnegative predictable function such that 
    \begin{equation}\label{eq:S-NSM}
    f_t(x) + f_t(-x) \leq 2, \qquad t \in \NN.
    \end{equation}
Moreover, an $\Scal$-NSM $(M_t)$ can be converted to an $\Scal$-NM $(\widetilde M_t)$ with $\widetilde M_t = \prod_{s \leq t} \widetilde f_s(X_s)$ by the following mirroring operation:
\[
    \widetilde f_t(x) = 
    \begin{cases}
    f_t(x), \qquad &f_t(x) \geq f_t(-x);\\
    2 - f_t(-x), \qquad &f_t(x) < f_t(-x).
    \end{cases}
\]
Indeed, we get that $\widetilde f_t \geq f_t$ and that equality holds in \eqref{eq:S-NSM} with $f_t$ replaced by $\widetilde f_t$.

Proposition~\ref{prop:symmetry} demonstrates how to construct admissible e-values for symmetry, and we give two instantiations that we have found (subjectively) elegant.
	Let $h$ be an odd function and consider
	\[	
		f(x) = 1 + \tfrac{2}{\pi}\arctan h(x)  \qquad \text{or} \qquad f(x) = 1 + \sin h(x) \qquad \text{or} \qquad f(x) = 1 + \tanh h(x).
	\]
	Then, $(\prod_{s\leq t} f(X_s))$ is an $\Scal$--NM and thus an e-process for $\Scal$ which is admissible by Proposition~\ref{prop:symmetry}.

Finally, we return to the exponential $\Scal$-NSM from \cite{de_la_pena_general_1999} from Section~\ref{sec:symmetric-NSM}, showing that it is leads to an inadmissible e-process for $\Scal$, and improving it to an admissible one by converting the NSM to an NM.

\begin{example}
 Let $\Pcal \supset \Scal$  and
recall from Section~\ref{sec:symmetric-NSM} that the process $(S_t):=(S_t^0)$ given by
\[
    S_t = \prod_{s \leq t}  g(X_s), \qquad \text{where} \qquad g(x) := \exp\left(x - \frac{x^2}{2}\right),
\]
is an $\Scal$-NSM. Further, $(S_t)$ is not a martingale unless $X_t=0$ for all $t \in \NN$, and the corresponding $\Scal$-e-process is inadmissible due to the following argument.  Define
\[
    f(x) = 
    \begin{cases}
    g(x), \qquad &x \geq 0;\\
    2 - g(-x), \qquad &x < 0.
    \end{cases}
\]
    Then $f \geq g$ with equality only if $x=0$. Further, $f(-x) - 1 = 1 - f(x)$ and finally $f$ is nonnegative since $g \leq e^{1/2} \approx 1.65$; hence  $(\prod_{s\leq t} f(X_s))$ is an $\Scal$-NM by Proposition~\ref{prop:symmetry}. This also yields that the corresponding e-value is admissible. Even though the original $\Scal$-NSM is inadmissible, we recognize the aesthetic and analytical advantage in having a simple exponential formula.
\end{example}

Let us now illustrate that the p-processes corresponding to the $\Scal$-NMs fully utilize the available Type-I error budget in the sense of the next proposition.

\begin{proposition}\label{prop:admissible-pvalue}
Consider a nonnegative function $f$,  continuous and strictly monotone at zero and such that $f-1$ is odd.  Then $M_t := \prod_{s \leq t} f(X_s)$ is an $\Scal$-NM by Proposition~\ref{prop:symmetry} and we have 
\begin{align} \label{eq:200803}
    \sup_{\Qtt \in \Scal}~ \Qtt\left(\sup_{t\in \NN_0} M_t \geq \frac{1}{\alpha}\right) = \alpha, ~ \text{ for any $\alpha \in (0,1]$. }
\end{align}
Thus, defining $\Pval_t := \inf_{s\leq t} 1/M_s$, we have that $(\Pval_t)$ is a p-process for $\Scal$ that satisfies
\begin{equation}\label{eq:p-value-sym}
 \sup_{\Qtt \in \Scal,\tau \geq 0}~ \Qtt\left( \Pval_\tau \leq \alpha \right) = \alpha, ~ \text{ for any $\alpha \in (0,1]$. }
\end{equation}
\end{proposition}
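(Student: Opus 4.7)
The strategy is to establish \eqref{eq:200803} first and then transfer it to \eqref{eq:p-value-sym} using the composite equivalence Lemma~\ref{lem:composite_equiv_uniform_defns}. The upper bound $\sup_{\Qtt\in\Scal}\Qtt(\sup_{t\in\NN_0} M_t \geq 1/\alpha) \leq \alpha$ is immediate: by Proposition~\ref{prop:symmetry}, $(M_t)$ is an $\Scal$-NM with $M_0 = 1$ (the empty product), and Ville's inequality \eqref{eq:Ville} applied under each $\Qtt \in \Scal$ yields the bound.

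For the matching lower bound, the plan is to invoke Corollary~\ref{C:200724}. Since $f-1$ is odd, $f(0) = 1$, and by continuity and strict monotonicity of $f$ at zero, for any $\varepsilon \in (0,1)$ I can choose $x>0$ small enough that $\eta(x) := |f(x)-1| \in (0,\varepsilon]$; WLOG assume $f$ is strictly increasing at zero, so $\eta(x) = f(x)-1$ and $f(-x) = 1-\eta(x)$. Let $\Qtt_\varepsilon \in \Scal$ be the law under which $(X_t)$ is i.i.d.~with $\Qtt_\varepsilon(X_t = x) = \Qtt_\varepsilon(X_t = -x) = 1/2$. Under $\Qtt_\varepsilon$ the multiplicative increments $Y_t = f(X_t)$ take values $1 \pm \eta(x)$ each with probability $1/2$, so $(Y_t-1)^2 = \eta(x)^2 > 0$ almost surely, verifying \eqref{eq:emp-var}. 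For any $\Fcal_{t-1}$-measurable $\beta > 1$, the conditional event $\{Y_t \geq \beta\}$ is empty off $\{\beta \leq 1+\eta(x)\}$, while on $\{\beta \leq 1+\eta(x)\}$ one has
\[
\EE_{\Qtt_\varepsilon}[Y_t \mid \Fcal_{t-1}, Y_t \geq \beta] = 1+\eta(x) \leq 1+\varepsilon < \beta(1+\varepsilon),
\]
where the last strict inequality uses $\beta > 1$. This verifies \eqref{eq:200802}, so Corollary~\ref{C:200724} delivers \eqref{eq:200803}.

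Finally, to deduce \eqref{eq:p-value-sym}, set $A_t := \{\Pval_t \leq \alpha\} = \{\sup_{s \leq t} M_s \geq 1/\alpha\}$, a nondecreasing adapted sequence of events satisfying $\bigcup_{t\in\NN} A_t = \{\sup_{t\in\NN_0} M_t \geq 1/\alpha\}$ (after folding $t=0$ into the union, which only adds a null event since $M_0=1 < 1/\alpha$). Then \eqref{eq:200803} reads $\sup_{\Qtt\in\Scal}\Qtt(\bigcup_t A_t) = \alpha$, and the equality clause of Lemma~\ref{lem:composite_equiv_uniform_defns} (applied with $\Qcal = \Scal$) immediately upgrades this to $\sup_{\Qtt\in\Scal}\sup_\tau \Qtt(A_\tau) = \alpha$, which is exactly \eqref{eq:p-value-sym}. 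The main obstacle I anticipate is the uniform verification of \eqref{eq:200802} over all $\Fcal_{t-1}$-measurable $\beta$; the key quantitative ingredient is that $\eta(x)$ can be made no larger than $\varepsilon$ by the continuity of $f$ at zero combined with $f(0)=1$.
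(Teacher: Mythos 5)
Your proposal is correct and follows essentially the same route as the paper: both establish the lower bound by invoking Corollary~\ref{C:200724} with a symmetric two-point i.i.d.~measure whose atoms are placed close enough to zero (using continuity and strict monotonicity of $f$ at the origin) so that the multiplicative increments are bounded by $1+\varepsilon$, verifying conditions \eqref{eq:emp-var} and \eqref{eq:200802}, and both then pass from \eqref{eq:200803} to \eqref{eq:p-value-sym} via the composite equivalence Lemma~\ref{lem:composite_equiv_uniform_defns}. The only cosmetic differences are that the paper fixes $\eta$ with a factor-of-two slack ($f\le 1+\varepsilon$ on $(-2\eta,2\eta)$) and spells out the product-measure notation $\Qtt_\eta = \Utt\times\mu_\eta^\infty$, while you make the transfer to the $p$-value statement more explicit by naming the equality clause of the equivalence lemma.
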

The proof is in Section~\ref{sec:proofs}. An analogous result to the above proposition is known for the class of subGaussian distributions~\cite[Proposition 4]{howard_uniform_2019}, but had only been conjectured for other nonparametric classes like $\Scal$.
Unfortunately, not every p-process for $\Scal$ constructed as \eqref{eq:p-value-sym} above is admissible; see Example~\ref{ex:sym_counter} in Appendix~\ref{sec:more-examples}. 

Finally, let us construct an $\Scal$-admissible p-process in the next example.
\begin{example}
Define the following subset of symmetric distributions:
\begin{align}\label{eq:200918}
\widetilde \Scal := \left\{\Ptt \in \Scal: \Ptt\left( X_t \neq 0 \textnormal{ i.o.}\right) = 1\right\}.
\end{align}
Next, we define the process $(\Pval_t)$ as $\Pval_0:=1$ and
    \[
        \Pval_t := 1 - \sum_{s \leq t} \frac{1}{2^{N_s}}  \1_{X_s > 0} = \Pval_{t-1} - \frac{1}{2^{N_t}}  \1_{X_t > 0}, \qquad \text{where} \qquad N_s := \sum_{i \leq s} \1_{X_i \neq 0}. 
    \]
Then it can be checked that $(\Pval_t)$ is a closed $\Scal$-MM.
    Moreover, $\inf_{t \in \NN_0} \Pval_t$ is $\Qtt$-uniform for each $\Qtt \in \widetilde \Scal$. 
    Assume for the moment that $\Qtt \in \Scal \setminus \widetilde \Scal$  can be locally dominated by some $\Qtt' \in \widetilde \Scal$.  Proposition~\ref{P:200823}\ref{P:200823:1} and Corollary~\ref{cor:martingale-suff-composite}\ref{cor:martingale-suff-composite:1} then yield that $(\Pval_t)$ is $\Scal$-admissible. 
    
    Let us now fix some $\Qtt \in \Scal \setminus \widetilde \Scal$ and argue that indeed it can be locally dominated by some $\Qtt' \in \widetilde \Scal$. To do so, define the measure $\mu := 1/2 (\delta_1 + \delta_{-1})$ with $\delta_x$ denoting again the Dirac measure at $x \in \RR$.  Moreover, let $\Htt$ denote the law of a  Poisson random variable with expectation one. On the appropriate canonical space, consider the measure $\Qtt \times \mu^\infty \times \Htt$, and write $U, (X_t), (Y_t), H$ for the canonical random variables.
    To summarize,  $U$ is uniform, $(X_t)$ is our original conditionally symmetric sequence,  $(Y_t)$ is an independent sequence of Rademacher random variables, and $H$ is Poisson.
    Define a new sequence $(X_t')$  by $X_t' := X_t \1_{H > t} + Y_t \1_{H \leq t}$ for all $t \in \NN$ and let $\Qtt'$ denote the measure induced by $U$ and $(X_t')$. Then $\Qtt' \in \widetilde \Scal$ and it can be checked that $\Qtt'$ locally dominates $\Qtt$. This completes the proof of our initial claim.
    
    As a final observation, observe that if we replace $\Scal$ by the superset of probability measures for which the conditional laws of $X_t$ have median zero, all statements still hold.
\end{example}

This section has now developed admissible e-values and p-processes for testing for symmetry, and we move next to admissible sequential tests (and thus confidence sequences).

\begin{example}\label{eg:symmetric-sufficient-test}
    Consider the null $H_0: \Ptt \in \Scal$ from definition~\eqref{eq:symmetric-distributions}, and let $\alpha=0.05$ so that $1/\alpha=20$. Consider the process $(M_t)$ defined as follows. Let $M_0=1$, and zero is an absorbing state, meaning that if $M_t=0$, then the process stays at zero from then on. If $M_t$ is nonzero, then define $M_{t+1} := M_t + \rm{sign}(X_t)\1_{X_t \neq 0}$. It is easy to check that $(M_t)$ is an $\Scal$-NM. Define $\tau$ as the first time $M_t$ reaches $20$. Then,
    $M_\tau=20$ on $\{\tau < \infty\}$, and also $M_\infty=0$, $\Qtt$-almost surely, for each $\Qtt \in \widetilde \Scal$, as in \eqref{eq:200918}.
    Invoking
    Proposition~\ref{P:200823}\ref{P:200823:3} and Corollary~\ref{cor:martingale-suff-composite}\ref{cor:martingale-suff-composite:3}
    as in the previous example
    yields that $(\1_{M_t\geq 20})$ is $(\Scal,0.05)$-admissible. 
    More generally, $(\1_{M_t \geq 1/\alpha})$ is $(\Scal,\alpha)$-admissible whenever $1/\alpha \in \NN$.
\end{example}

Of course, there is nothing special about 0.05 and 20; for any other $\alpha$, the process $(M_t)$ can be altered accordingly to yield an admissible test for that $\alpha$.
The above process $(M_t)$ also delivers admissible tests for several subsets of $\Scal$, for example for if we restricted to only Gaussian distributions with any variance. 
This is interesting because admissibility is generally not subset-proof or superset-proof, but above we have a single process $(M_t)$ that yields admissible e-values and sequential tests for a variety of subsets of $\Scal$.

Continuing from Example~\ref{eg:symmetric-sufficient-test} and using Theorem~\ref{thm:ST/CS-admissibility}, we can construct an admissible level $\alpha$ test for any $(\Scal^m)_{m \in \RR}$, so the above construction yields an admissible confidence sequence for the center of symmetry.

Thus, we have accomplished our goal of constructing admissible versions of all four instruments for sequential inference, for a composite nonparametric class of distributions.

\section{Summary}

The central contribution of this work is to identify the central role of nonnegative martingales in anytime-valid sequential inference. As a by-product, we have added several modern mathematical techniques to the toolkit of the methodologist who wishes to design statistically efficient methods for inference at arbitrary stopping times. We end with a few comments.

It is apparent to us that some of our analysis may have been simpler in continuous time. Indeed, some of the difficulty in constructing admissible sequential tests using e-values arises from the `overshoot', while the difficulty in designing admissible p-processes arises because we do not observe the process `in between' the fixed times and thus the running infimum is not \emph{exactly} uniformly distributed in the limit. Several of these problems go away with continuous time/path martingales. However, continuous path martingales only represent large-scale approximations of most actual experimental setups, which typically involve discrete events. The accuracy of these approximations would have to be assessed, especially outside very high-frequency settings like finance, and it may not be clear how to do so. We believe that the additional effort to understand admissibility in the discrete time setup was fruitful.

Following the literature, our sequential inference tools were only required to have marginal guarantees, and not conditional ones. To pick one example, we required that for each $\Qtt \in \Qcal$, an e-value must satisfy $\EE_\Qtt [\Eval_\tau] \leq 1$ at arbitrary stopping times $\tau$, but it need not satisfy $\EE_\Qtt [\Eval_t | \Fcal_s] \leq \Eval_s$. This gap between conditional and marginal guarantees is paramount: it allows for the construction of e-values that are not simply $\Qcal$-NMs, because in several settings of interest, one can show that the only nontrivial $\Qcal$-NM is the constant process that equals one at all times, but nontrivial e-values with power to detect deviations from $\Qcal$ can still be constructed. We explore these connections further by using a structural notion called `fork-convexity' in a separate work.

Finally, the paper provides a rather general treatment of the inferential tools and problem settings. However, perhaps additional insights could be gained when $\Pcal$ or $\Qcal$ have special structure, or when we pay attention to particular classes of stopping times, or restrict ourselves  to a bounded horizon; these may all be promising directions to explore. Finally, while we take a step forward in relating the various concepts used for sequential inference, and present a thorough analysis of their validity and admissibility, the question of optimality is unaddressed by our work. Of course, this usually needs to be studied by specifying appropriate alternatives, and introducing metrics by which to judge optimality (such as the GROW criterion of~\citet{grunwald_safe_2019}), and so we leave such considerations for future work.

\subsection*{Acknowledgments}
The authors are thankful to the organizers of the International Seminar on Selective Inference, which stimulated conversations that led to this paper. AR acknowledges NSF DMS grant 1916320.

{
\hypersetup{linkcolor=red}
\bibliography{main}
\bibliographystyle{plainnat}
}

 \newpage

\appendix



\section{Additional technical concepts and definitions}  \label{app:A}

\subsection{Reference measures and local absolute continuity}
\label{sec:ref_measure}

Consider a probability space with a filtration $(\Fcal_t)_{t \in \NN_0}$. Let $\Rtt$ be a particular probability measure on $\Fcal_\infty$; we think of $\Rtt$ as a \emph{reference measure}.
We now explain the concept of local domination and how it allows us to unambiguously define conditional expectations.

\begin{itemize}
    \item $\Ptt$ is called \emph{locally absolutely continuous with respect to $\Rtt$} (or \emph{locally dominated by $\Rtt$}), if $\Ptt_t\ll \Rtt_t$ for all $t\in\NN$. We write this $\Ptt\ll_\text{loc}\Rtt$. More explicitly, this means that
    \[
    \text{$\Rtt(A)=0$}\quad\Rightarrow\quad \Ptt(A)=0, \quad \text{ for any $A\in\Fcal_t$ and $t\in\NN$}.
    \]
    Local absolute continuity does \emph{not} imply that $\Ptt\ll \Rtt$. However, it does imply that $\Ptt_\tau\ll \Rtt_\tau$ for any finite (but possibly unbounded) stopping time $\tau$. Indeed, if $A\in\Fcal_\tau$ and $\Rtt(A)=0$, then $A\cap\{\tau\le t\}\in\Fcal_t$ for all $t$, and hence $\Ptt(A)=\lim_{t\to\infty} \Ptt(A\cap\{\tau\le t\})=0$.
    
    \item A set $\Pbb$ of probability measures on $\Fcal_\infty$ is called locally dominated by $\Rtt$ if every element of $\Pbb$ is locally dominated by $\Rtt$.
    
    \item Any $\Ptt\ll_\text{loc}\Rtt$ has an associated \emph{density process}, namely the $\Rtt$-martingale $(Z_t)$ given by $Z_t:=\dd\Ptt_t/\dd\Rtt_t$. Being a nonnegative martingale, once $Z_t$ reaches zero it stays there. Thus with the convention $0/0:=1$, ratios $Z_\tau/Z_t$ are well-defined for any $t\in\NN$ and any finite stopping time $\tau\ge t$. Note that each $Z_t$ is defined up to $\Rtt$-nullsets, and therefore also up to $\Ptt$-nullsets.
    
    \item If $\Ptt\ll_\text{loc}\Rtt$ has density process $(Z_t)$, the following `Bayes formula' holds: for any $t\in\NN$, any finite stopping times $\tau$, and any nonnegative $\Fcal_\tau$-measurable random variable $Y$, one has
    \[
    \EE_\Ptt[Y\mid\Fcal_t]= \EE_\Rtt\left[\left.\frac{Z_\tau}{Z_t}Y\right|\Fcal_t\right], \text{ $\Ptt$-almost surely. }
    \]
    The right-hand side is uniquely defined $\Rtt$-almost surely (not just $\Ptt$-almost surely), and therefore provides a `canonical' version of $\EE_\Ptt [Y\mid\Fcal_t]$. We always use this version. This allows us to view such conditional expectations under $\Ptt$ as being well-defined up to $\Rtt$-nullsets.
\end{itemize}

One might ask why we work with \emph{local} domination, rather a `global' condition like $\Ptt \ll \Rtt$ for all $\Ptt$ of interest. The answer is that such a condition would be far too restrictive, as we now illustrate. 
Let $(X_t)_{t\in\NN}$ be a sequence of random variables. For each $\eta\in\mathbb R$, let $\Ptt^\eta$ be the distribution such that the $X_t$ become i.i.d.\ Gaussian with mean $\eta$ and unit variance. By the strong law of large numbers, $\Ptt^\eta$ assigns probability one to the event $A^\eta :=\{\lim_{t\to\infty}t^{-1}\sum_{s=1}^t X_s=\eta\}$. Moreover, the events $A^\eta$ are mutually disjoint: $A^\eta \cap A^\nu = \emptyset$ whenever $\eta\ne\nu$. Therefore, by definition, the measures $\{\Ptt^\eta\}_{\eta\in\mathbb R}$ are all mutually singular. Since there is an uncountable number of them, there cannot exist a measure $\Rtt$ such that $\Ptt^\eta\ll\Rtt$ for all $\eta \in \mathbb R$. 
On the other hand, if $\Ptt^\eta_t$ denotes the law of the partial sequence $X_1,\ldots,X_t$, then the measures $\{\Ptt^\eta\}_{\eta\in\mathbb R}$, are all mutually absolutely continuous. In particular, we could (for instance) use $\Rtt=\Ptt^0$ as reference measure and obtain $\Ptt^\eta_t \ll_{\text{loc}} \Rtt_t$ for all $\eta\in\mathbb R$.

\subsection{Essential supremum and infimum}
\label{sec:esssup}

We briefly review the notions of essential supremum and infimum. For more information, as well as proofs of the results below, we refer to Section~A.5 in \cite{MR2169807}.

On some probability space, consider a collection $\{Y_\alpha\}_{\alpha\in\Acal}$ of random variables, where $\Acal$ is an arbitrary index set. If $\Acal$ is uncountable, the pointwise supremum $\sup_{\alpha\in\Acal}Y_\alpha$ might not be measurable (not a random variable). Alternatively, it might happen that $Y_\alpha=0$ almost surely for every $\alpha\in\Acal$, but $\sup_{\alpha\in\Acal}Y_\alpha=1$. For this reason, the pointwise supremum is often not useful. Instead, one can use the \emph{essential supremum}.

\begin{proposition}
    There exists a $[-\infty,\infty]$-valued random variable $Y$, called the \emph{essential supremum} and denoted by $\esssup_{\alpha\in \Acal}Y_\alpha$, such that
    \begin{enumerate}
        \item $Y\ge Y_\alpha$, almost surely, for every $\alpha\in \Acal$,
        \item if $Y'$ is a random variable that satisfies $Y'\ge Y_\alpha$, almost surely, for every $\alpha\in \Acal$, then $Y'\ge Y$, almost surely.
    \end{enumerate}
    The essential supremum is almost surely unique.
\end{proposition}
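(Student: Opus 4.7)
The plan is to reduce the potentially uncountable supremum $\sup_{\alpha\in\Acal}Y_\alpha$, which need not even be measurable, to a countable one by a maximization argument. Fix any strictly increasing bounded measurable function $f \colon [-\infty,\infty]\to\RR$, say $f(x):=\arctan x$ with $f(\pm\infty):=\pm\pi/2$. For every countable $\Bcal\subseteq\Acal$ put $Y_\Bcal := \sup_{\alpha\in\Bcal}Y_\alpha$, which is a bona fide random variable, and set
\[
s := \sup_{\Bcal\subseteq\Acal,\ \Bcal \text{ countable}} \EE\bigl[f(Y_\Bcal)\bigr].
\]
Since $f$ is bounded the quantity $s$ is finite. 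Pick countable $\Bcal_n\subseteq\Acal$ with $\EE[f(Y_{\Bcal_n})]\to s$, let $\Bcal^*:=\bigcup_n\Bcal_n$ (still countable), and define $Y:=Y_{\Bcal^*}$.

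The claim is that this $Y$ is the essential supremum. Measurability is automatic as a countable supremum. To check that $\EE[f(Y)]=s$, note that the increasing sequence $\Bcal_1\subseteq\Bcal_1\cup\Bcal_2\subseteq\cdots$ yields $Y_{\Bcal_1}\le Y_{\Bcal_1\cup\Bcal_2}\le\cdots\uparrow Y$, so dominated convergence (using boundedness of $f$) gives $\EE[f(Y_{\Bcal_1\cup\cdots\cup\Bcal_n})]\to\EE[f(Y)]$; sandwiching between $\EE[f(Y_{\Bcal_n})]$ and $s$ forces the limit to equal $s$. For property~(1), fix $\alpha\in\Acal$ and apply the definition of $s$ to the countable enlargement $\Bcal^*\cup\{\alpha\}$, obtaining
\[
\EE\bigl[f(Y\vee Y_\alpha)\bigr] \le s = \EE\bigl[f(Y)\bigr].
\]
Since $Y\vee Y_\alpha\ge Y$ and $f$ is strictly increasing, this can only hold if $Y\vee Y_\alpha=Y$ almost surely, i.e.\ $Y_\alpha\le Y$ almost surely. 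Property~(2) is then immediate: any $Y'$ with $Y'\ge Y_\alpha$ almost surely for every $\alpha\in\Acal$ satisfies $Y'\ge Y_\alpha$ simultaneously for the countably many $\alpha\in\Bcal^*$ outside a single null set, and hence $Y'\ge\sup_{\alpha\in\Bcal^*}Y_\alpha=Y$ almost surely. Almost sure uniqueness follows by applying property~(2) symmetrically to any two candidates.

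The only real obstacle is the passage from countable to uncountable suprema, which is precisely what the compactness-flavoured argument via bounded $f$ handles: replacing random variables by the bounded real numbers $\EE[f(Y_\Bcal)]$ lets us extract a maximizing countable family, and the strict monotonicity of $f$ then transfers the extremality back to the almost sure order on random variables. Everything else is routine measure theory.
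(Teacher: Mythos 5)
Your proof is correct and is exactly the standard argument (maximize $\EE[f(\sup_{\alpha\in\Bcal}Y_\alpha)]$ over countable $\Bcal$ for a bounded strictly increasing $f$), which is the proof given in the reference (F\"ollmer--Schied, Section~A.5) that the paper cites for this proposition rather than proving it itself. No gaps.
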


In words, the essential supremum is the smallest almost sure upper bound on $\{Y_\alpha\}_{\alpha\in\Acal}$. The proposition guarantees that it always exists. In some cases, more can be said: the essential supremum can be obtained as the limit of an increasing sequence.

\begin{proposition}\label{P_esssup_closed_max}
    Suppose $\{Y_\alpha\}$ is closed under maxima, meaning that for any $\alpha,\beta\in \Acal$ there is some $\gamma\in \Acal$ such that $Y_\gamma= Y_\alpha \vee Y_\beta$. Then there is a sequence $\{\alpha_n\}_{n \in \NN}$ such that $\{Y_{\alpha_n}\}_{n \in \NN}$ is an increasing sequence and $\esssup_{\alpha\in \Acal}Y_\alpha = \lim_{n \to \infty} Y_{\alpha_n}$ almost surely.
\end{proposition}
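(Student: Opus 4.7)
The plan is to run the classical ``exhaustion'' argument. First I would reduce to the case where all $Y_\alpha$ take values in a bounded interval by replacing each $Y_\alpha$ with $\varphi(Y_\alpha)$ for a strictly increasing bounded continuous bijection $\varphi\colon [-\infty,\infty]\to[-1,1]$ (e.g., $\varphi(x)=\tanh(x)$ with the convention $\varphi(\pm\infty)=\pm 1$). Since $\varphi$ is strictly increasing, the family $\{\varphi(Y_\alpha)\}_{\alpha\in\Acal}$ is still closed under maxima, and any monotone sequence $\varphi(Y_{\alpha_n})\uparrow\varphi(Y)$ corresponds to $Y_{\alpha_n}\uparrow Y$. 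In particular, $\esssup_\alpha\varphi(Y_\alpha)=\varphi(\esssup_\alpha Y_\alpha)$. So from now on I may assume $|Y_\alpha|\le 1$ and in particular $Y_\alpha$ is integrable for every $\alpha$.

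Next I would set $c:=\sup_{\alpha\in\Acal}\mathbb E[Y_\alpha]\in[-1,1]$ and pick any sequence $\{\beta_n\}$ with $\mathbb E[Y_{\beta_n}]\to c$. Using closure under pairwise maxima inductively, I select $\alpha_n\in\Acal$ with
\[
Y_{\alpha_n}=Y_{\beta_1}\vee Y_{\beta_2}\vee\cdots\vee Y_{\beta_n},
\]
which makes $\{Y_{\alpha_n}\}$ an increasing sequence with $\mathbb E[Y_{\alpha_n}]\ge\mathbb E[Y_{\beta_n}]\to c$, hence $\mathbb E[Y_{\alpha_n}]\uparrow c$. Define $Y:=\lim_{n\to\infty}Y_{\alpha_n}$, which exists almost surely by monotonicity and boundedness; by dominated convergence $\mathbb E[Y]=c$. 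It is immediate that $Y\le\esssup_\alpha Y_\alpha$ almost surely, since each $Y_{\alpha_n}$ is dominated by the essential supremum.

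The main obstacle, and the step I would attend to most carefully, is the reverse inequality: that $Y$ is almost surely an upper bound for the whole family, not just for the chosen subsequence. For this I would fix an arbitrary $\beta\in\Acal$ and use closure under maxima to pick $\gamma_n\in\Acal$ with $Y_{\gamma_n}=Y_{\alpha_n}\vee Y_\beta$. Then $Y_{\gamma_n}$ is increasing, $Y_{\gamma_n}\ge Y_{\alpha_n}$, and $\mathbb E[Y_{\gamma_n}]\le c$ by definition of $c$, while $\mathbb E[Y_{\gamma_n}]\ge\mathbb E[Y_{\alpha_n}]\to c$. Hence $\mathbb E[Y_{\gamma_n}]\to c$ as well. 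By dominated convergence, $\mathbb E[\lim_n Y_{\gamma_n}]=c=\mathbb E[Y]$, and since $\lim_n Y_{\gamma_n}\ge Y$ almost surely, the equality of expectations forces $\lim_n Y_{\gamma_n}=Y$ almost surely. In particular $Y\ge Y_\beta$ almost surely. Because $\beta\in\Acal$ was arbitrary, $Y$ is an almost sure upper bound on $\{Y_\alpha\}_{\alpha\in\Acal}$, so the minimality property of the essential supremum yields $\esssup_\alpha Y_\alpha\le Y$ almost surely.

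Combining the two inequalities gives $\esssup_\alpha Y_\alpha=Y=\lim_n Y_{\alpha_n}$ almost surely, as claimed. The only subtle point worth flagging is that ``$\beta$ arbitrary $\Rightarrow$ $Y$ dominates the whole family'' is an almost-sure statement requiring one null set per $\beta$; it is legitimate because the definition of essential supremum only asks for $Y\ge Y_\beta$ almost surely for each individual $\beta$, not for a single null set serving all $\beta$ simultaneously.
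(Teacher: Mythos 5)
Your argument is correct and is essentially the classical exhaustion proof that the paper itself does not reproduce but defers to (Section~A.5 of the cited reference): reduce to bounded variables via a strictly increasing bijection, maximize the expectation over the family, and use closure under pairwise maxima to show the resulting monotone limit dominates every member almost surely. All steps, including the final remark that only one null set per index $\beta$ is needed, are sound.
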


One can also define the \emph{essential infimum} by setting
\[
\essinf_{\alpha\in\Acal}Y_\alpha := -\esssup_{\alpha\in\Acal}(-Y_\alpha).
\]
This is the largest almost sure lower bound on $\{Y_\alpha\}_{\alpha \in \Acal}$. It satisfies properties analogous to those in the propositions above.

\subsection{On the choice of filtration}\label{sec:filtration-choice}

In  the paper, we assume that the filtration $(\Fcal_t)$ in use is by default the canonical filtration $\Fcal_t :=\sigma(U,X_1,\ldots,X_t)$. However, there are examples of hypothesis tests for $H_0: \Qtt \in \Qcal$ where the only $\Qcal$-NMs with respect to $(\Fcal_t)$ are almost surely constants. For the purpose of designing more powerful tests, it may make sense to coarsen the filtration.

As a first example, consider the problem of testing if a sequence is exchangeable:
\[
H_0: X_1,X_2,\dots \text{ form an exchangeable sequence.}
\]
Vovk~\cite{vovk_testing_randomness_2019} demonstrates that all martingales with respect to $(\Fcal_t)$ (under the null) are constants, and are hence all derived tests are powerless to reject the null. Nevertheless, Vovk demonstrates that  one can derive interesting and nontrivial `conformal' martingales $(M_t)$ with respect to the restricted filtration $\Gcal_t := \sigma(M_1,\dots,M_t) \subset \Fcal_t$, that do indeed have power to reject the null (for appropriate deviations from the null). In short, \underline{coarsening the filtration} is a design tool that could aid in the construction of more powerful sequential tests, and p-process and e-processes.

In the following example, we show how the choice of including external randomization $U$ into $\Fcal_0$ also helps design better p-processes. (However, it is not always possible to randomize atoms as Example~\ref{eg:atomic-pvalue} illustrates.)

\begin{example}
Assume that $\Qcal = \{\Qtt\}$, where under $\Qtt$ we have that $X_1$ is Bernoulli($1/2$) and $X_2, X_3, \ldots = 0$. Consider the canonical filtration $(\Gcal_t)$, so that $\Gcal_\infty = \sigma(X_1)$. 
Then $(M_t)$ with $M_0 = 1$, $M_t = 2 X_1$ for all $t \in \NN$ is a $\Qtt$-NM and the corresponding p-process $(\inf_{s \leq t} 1/{M_s})$ is admissible. (Indeed, any p-process $(\Pval_t)$ has to satisfy $\Qtt(\Pval_1 \leq \alpha) \in \{0,1/2,1\}$ for each $\alpha \in [0,1]$). However, by expanding the filtration using external randomization, one can easily derive a strictly smaller p-process $(\Pval_t')$ such that $\Pval_1'$ is uniform. In other words, the original p-process is only admissible under the filtration generated solely by the observations, but is inadmissible under an expanded filtration that includes a randomization device (which is the filtration $(\Fcal_t)$ in this paper).
\end{example}

The following randomization device can be found in \cite[Lemma 2]{brockwell2007universal}, but we provide a simpler proof. We remark briefly that we imagine this device being used when $Y$ is a valid p-value with a known discrete distribution, so that it recovers a smaller, but still valid, p-value. 

\begin{lemma}[Randomization device] \label{lem:rand_device}
    If $Y$ is a random variable with distribution function $F$ and $U$ is an independent uniformly distributed random variable, then 
    \[
       Y' :=  U F(Y) + (1-U) F(Y-)
    \]  
    is uniformly distributed and satisfies $Y' \le F(Y)$. 
\end{lemma}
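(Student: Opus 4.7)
\textbf{Plan for Lemma~\ref{lem:rand_device}.}
The bound $Y' \le F(Y)$ is immediate: since $F(Y-) \le F(Y)$ and $U, 1-U \in [0,1]$,
\[
Y' \;=\; UF(Y)+(1-U)F(Y-) \;\le\; UF(Y)+(1-U)F(Y) \;=\; F(Y).
\]
So the only real content is uniformity of $Y'$ on $[0,1]$. The strategy is to condition on $Y$ and exploit the independence of $U$: given $Y=y$, we have $Y' = F(y-) + U(F(y)-F(y-))$, which is uniformly distributed on the interval $[F(y-), F(y)]$ (and degenerate to $F(y)$ whenever $F$ is continuous at $y$).

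For a fixed $u \in [0,1]$, this conditional description splits into three regimes: on $\{F(Y) \le u\}$ we have $\Pr(Y' \le u \mid Y)=1$; on $\{F(Y-) > u\}$ we have $\Pr(Y' \le u \mid Y)=0$; and on the remaining event $\{F(Y-) \le u < F(Y)\}$ (which forces $Y$ to be an atom of $F$ whose jump straddles $u$, and in fact forces $Y$ to equal the unique such atom $y^*$, if one exists), $\Pr(Y' \le u \mid Y) = (u - F(Y-))/(F(Y)-F(Y-))$. Taking expectations over $Y$ and using $\Pr(Y=y^*) = F(y^*)-F(y^*-)$, the last contribution telescopes cleanly to $u - F(y^*-)$. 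This yields
\[
\Pr(Y' \le u) \;=\; \Pr(F(Y) \le u) \;+\; \bigl(u - F(y^*-)\bigr)\mathbf 1_{\{y^* \text{ exists}\}}.
\]

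It remains to show this equals $u$, which is a short case analysis. If a straddling atom $y^*$ exists, then by monotonicity $\{F(Y) \le u\} = \{Y < y^*\}$ (since $F(y) \le F(y^*-) \le u$ for $y < y^*$ and $F(y) \ge F(y^*) > u$ for $y \ge y^*$), whence $\Pr(F(Y)\le u) = F(y^*-)$ and the sum is exactly $u$. If no such atom exists, then the range of $F$ crosses $u$ continuously and, using right-continuity of $F$ together with $y_0 := \sup\{y: F(y) \le u\}$, one obtains $\Pr(F(Y) \le u) = F(y_0) = u$ directly. The main (and only non-routine) obstacle is keeping this last case clean in the presence of a mixed distribution: one must use right-continuity of $F$ and the fact that jumps either lie entirely at or below $u$, or entirely above $u$, to conclude $\Pr(F(Y)\le u)=u$. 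All other steps are bookkeeping.
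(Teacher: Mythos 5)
Your proposal is correct and follows essentially the same route as the paper: condition on $Y$, use independence of $U$ to get $\Pr(Y'\le u\mid Y)=1$ below the critical level, $=0$ above it, and an interpolating term $(u-F(Y-))/(F(Y)-F(Y-))$ on the (at most one) atom straddling $u$, then take expectations. The paper anchors its case split at the quantile $y=\inf\{x:F(x)\ge a\}$ rather than at the values of $F(Y)$ and $F(Y-)$ relative to $u$, but this is only a cosmetic difference.
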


\begin{proof}
    Fix $a \in [0,1]$ and define $y := \inf\{x \in \mathbb R: F(x) \geq a\}$. Note that
    \begin{align*}
    \Pr(Y' \leq a| Y) &= \Pr\left(\left. U \le \frac{a - F(y-)}{F(y) - F(y-)} \right| Y\right)\1_{Y=y} \\
    & \quad + \Pr( U F(Y) + (1-U) F(Y-) \leq a ~|~ Y)\1_{Y\ne y}.
    \end{align*}
    (If $\Pr(Y=y)=0$, the first term should be understood as zero.) On $\{Y>y\}$ we have $a < F(Y-)$, so that the second term equals zero. On $\{Y<y\}$ we have $F(Y) \le a$, so that the second term instead equals one. Since also $U$ is uniform and independent of $Y$, we get
    \[
    \Pr(Y' \leq a| Y) = \frac{a - F(y-)}{F(y) - F(y-)} \1_{Y=y} + \1_{Y<y}.
    \]
    Taking expectations and simplifying gives $\Pr(Y' \le a) = a$, showing that $Y'$ is uniformly distributed. Finally, it is clear from the definition of $Y'$ that $Y' \le F(Y)$.
\end{proof}

\section{Omitted proofs}
\label{sec:proofs}

\begin{proof}[Proof of Lemma~\ref{lem:equiv_uniform_defns}]
It is clear that \ref{L1:2} $\implies$ \ref{L1:3}.
The implication \ref{L1:1} $\implies$ \ref{L1:2} follows from
\begin{align*}
  A_T
  &= \left(\bigcup_{t\in \NN} (A_t \cap \{T = t\}) \right)
    \cup (A_\infty \cap \{T = \infty\})
    \subseteq \bigcup_{t \in \NN} A_t.
\end{align*}
For \ref{L1:3} $\implies$ \ref{L1:1}, take
$\tau := \inf\{t \in \mathbb{N}: A_t \text{ occurs}\}$, so that
$A_\tau = \bigcup_{t \in \NN} A_t$. 
\end{proof}

\bigskip

\begin{proof}[Proof of Lemma~\ref{lem:non-equiv}]
 First, \ref{L12:1} implies \ref{L12:2} since $N_T \leq \sup_{t \in \NN_0} N_t$, hence $\EE[N_T] \leq \EE[\sup_{t \in \NN_0} N_t]$, for all random times $T$. Conversely, for any $\varepsilon>0$ there exists some random time $T$ such that $N_T \geq \sup_{t \in \NN_0} N_t -\varepsilon$. Thus if \ref{L12:2} holds, then $\EE[\sup_{t \in \NN_0} N_t] \le \EE[N_T] + \varepsilon \le 1+\varepsilon$. Since $\varepsilon>0$ was arbitrary, we find that \ref{L12:2} implies \ref{L12:1}. It is clear that \ref{L12:2} implies \ref{L12:3}. 
 
 The fact that \ref{L12:3} implies \ref{L12:4} is however not obvious, and is essentially a consequence of a result by~\citet[Theorem 3]{shafer_test_2011}, as also noted recently by~\citet[Lemma 5.1]{kirichenko2020minimax}. 
 First, we note that if $(N_t)$ satisfies \ref{L12:3},
then   $\Pval_t := 1 \wedge \inf_{s \leq t} 1/N_s$ is a p-process (see also Proposition~\ref{prop:construct-p-vals}\ref{prop:construct-p-vals:1}).
 In particular, $\Pval_\infty :=  \inf_{t \in \NN_0} \Pval_t$ stochastically dominates a uniform. Therefore, for any nonnegative, nonincreasing function $f(u)$ such that $\int_0^1 f(u)\dd u = 1$, we have $\EE[f(\Pval_\infty)] \le \int_0^1 f(u)\dd u = 1$ (see also the proof of Proposition~\ref{prop:construct-e-vals}). The function $f(u) := g(1/u)$, with $g$ as in the lemma, satisfies this condition. Consequently, $\EE[g(1\vee \sup_{s\in\NN_0} N_s)] = \EE[f(\Pval_\infty)] \le 1$, as required.
 \end{proof}
 
\bigskip

\begin{proof}[Proof of Proposition~\ref{prop:NM-likelihood}]
For each $t\in\NN_0$, define a probability measure $\Ptt'_t$ on the Borel sets of $\RR^t$ by $\Ptt'_t(A) := \EE_\Qtt[M_t \1_A]$. Because $(M_t)$ is a martingale under $\Qtt$, the sequence $(\Ptt'_t)_{t\in\NN}$ forms a consistent system of finite-dimensional distributions. Therefore, by Kolmogorov's extension theorem, there exists a single probability measure $\Ptt$ on the Borel sets of $\Omega = \RR^\NN$ whose projection onto $\RR^t$ is exactly $\Ptt'_t$ for each $t\in\NN$. Put differently, $\Ptt$ satisfies $\Ptt_t = \Ptt'_t$ for all $t\in\NN$, as desired.
\end{proof}

 \bigskip

\begin{proof}[Proof of Proposition~\ref{P:200905}]
We prove the statement for p-processes; the same argument holds for e-processes and sequential tests.
The proof is based on transfinite induction. Fix some p-process $(\Pval_t)$. For all countable ordinals $\beta$, we now recursively define p-processes $(\Pval_t^\beta)$ as follows. For $\beta=1$, we set $(\Pval^\beta_t) := (\Pval_t)$. For any successor ordinal $\gamma := \beta + 1$, if $(\Pval^\beta_t)$ is $\Qcal$-admissible we set $\Pval^\gamma_t := \Pval^\beta_t$, and otherwise we let $(\Pval^\gamma_t)$ be any p-process that strictly dominates $(\Pval^\beta_t)$.  For any limit ordinal $\gamma := \lim_{n \to \infty} \beta_n$, we define $(\Pval^\gamma_t) := (\lim_{n \to \infty} \Pval^{\beta_n}_t)$. Let us now use the induction assumption that $(\Pval^\beta_t)$ is a $\Qcal$-p-process for all $\beta < \gamma$, for this limit ordinal $\gamma$.  Since $(\lim_{n \to \infty} \Pval^{\beta_n}_t)$ is a decreasing limit, we have for every $\varepsilon>0$,  $\alpha\in[0,1]$, and $\Qtt \in \Qcal$ that
\[
\Qtt\left( \inf_{t\in\NN_0} \Pval^\gamma_t \le \alpha \right) \le \Qtt\left( \lim_{n \to \infty} \inf_{t\in\NN_0} \Pval^{\beta_n}_t < \alpha + \varepsilon \right)
= \lim_{n \to \infty} \Qtt\left( \inf_{t\in\NN_0} \Pval^{\beta_n}_t < \alpha + \varepsilon \right) \le \alpha + \varepsilon.
\]
It follows that $(\Pval^\gamma_t)$ is a $\Qcal$-p-process. By transfinite induction, this holds for all countable ordinals $\beta$.

Writing $\Rtt$ for the reference probability measure, $\{\EE_\Rtt[\sum_{t\in\NN_0} 2^{-t} \Pval^\beta_t]\}_{\beta}$ defines a decreasing $[0,2]$-valued transfinite sequence. This sequence must eventually become stationary, that is, it becomes constant for all $\beta$ beyond some countable ordinal $\beta_0$. Thus $\Pval^\beta_t = \Pval^{\beta_0}_t$ for all $\beta \ge \beta_0$ and all $t\in\NN_0$. By construction, $(\Pval^{\beta_0}_t)$ must then be admissible and dominate $(\Pval_t)$. This shows that any p-process for $\Qcal$ can be dominated by a $\Qcal$-admissible  p-process.

 Let us also remark that in the case of $\Qcal$ being a singleton the statement for e-processes and sequential tests could be proved in a more constructive manner as in Subsections~\ref{SS:6.2} and \ref{SS:6.3}.
\end{proof}

\bigskip

\begin{proof}[Proof of Proposition~\ref{prop:construct-seq-tests}]
We prove the three statements in order. Let $(\psi_t)$ denote the constructed binary sequence, which we will now show is a $(\Qcal,\alpha)$-sequential tets.  Let $\tau$ denote an arbitrary stopping time, potentially infinite, and fix $\Qtt \in \Qcal$.
\begin{enumerate}[label={\rm(\arabic{*})}, ref={\rm(\arabic{*})}]
    \item $\Qtt(\psi_\tau = 1) = \Qtt(\Pval_\tau \leq \alpha) \leq \alpha$ since $(\Pval_t)$ is a $\Qtt$-p-process.
    \item $\Qtt(\psi_\tau = 1) = \Qtt(\Eval_\tau \geq 1/\alpha) \leq \alpha \EE_{\Qtt}[\Eval_\tau]  \leq \alpha$, where we used Markov's inequality and the fact that $(\Eval_t)$ is a $\Qtt$-e-process. In short, e-processes satisfy Ville's inequality.
    \item $\Qtt(\psi_\tau = 1) = \Qtt(\phi(\Qcal) \cap  \CI_\tau = \emptyset) \leq \Qtt(\phi(\Qtt) \notin  \CI_\tau) \leq \alpha$, where the first inequality follows because the event $\{\phi(\Qcal) \cap \CI_\tau = \emptyset\}$ implies that $\CI_\tau$ does not contain $\phi(\Qtt)$, which is improbable under $\Qtt$.
\end{enumerate}
The fact that the $(\Qcal, \alpha)$-sequential tests in \ref{prop:construct-seq-tests:1} and \ref{prop:construct-seq-tests:2} are nested is obvious. This completes the proof.
\end{proof}

\bigskip

\begin{proof}[Proof of Proposition~\ref{prop:construct-p-vals}]
We prove the three statements in order. Let $(\Pval_t)$ denote the constructed sequence of random variables, which we will now show in each case is a p-process.
Let $\tau$ denote an arbitrary stopping time, potentially infinite, and fix $\Qtt \in \Qcal$.
\begin{enumerate}[label={\rm(\arabic{*})}, ref={\rm(\arabic{*})}]
    \item $\Qtt(1/\Eval_\tau \leq \alpha) = \Qtt(\Eval_\tau \geq 1/\alpha)  \leq \EE_{\Qtt}[\Eval_\tau]\cdot \alpha \leq \alpha$, where we used Markov's inequality and the fact that $(\Eval_t)$ is a $\Qtt$-e-process. Since a p-process remains valid after taking the running infimum, we obtain $(\Pval_t)$ is valid.
    \item\label{item:200809} $\Qtt(\Pval_\tau > \alpha) = \Qtt(\psi_\tau(\alpha)= 0) \geq 1-\alpha$,     where the equality follows since the sequential tests are nested and the inequality because $(\psi_t(\alpha))$ is a $(\Qcal,\alpha)$-sequential test.
    \item $\Qtt(\Pval_\tau > \alpha) = \Qtt(\phi(\Qcal) \cap \CI_\tau(\alpha) \neq \emptyset ) \geq \Qtt(\phi(\Qtt) \in \CI_\tau(\alpha) ) > 1-\alpha$ as in \ref{item:200809}. 
\end{enumerate}
This completes the proof.
\end{proof}

\bigskip

\begin{proof}[Proof of Proposition~\ref{prop:construct-CS}]
Let $(\CI_t)$ denote the constructed sequence of sets, which we will now show  is a $(\phi, \Pcal,\alpha)$-confidence sequence. 
To this end, let $\tau$ denote an arbitrary stopping time, potentially infinite, and fix $\Ptt \in \Pcal$; note that $\Ptt \in \Pcal^{\gamma}$ for some $\gamma \in \Zcal$. 
Then, we have $\Ptt(\phi(\Ptt) \notin \CI_\tau) = \Ptt(\gamma \notin \CI_\tau) = \Ptt(\psi_\tau^{\gamma} = 1 ) \leq \alpha$ since $(\psi^{\gamma}_t)$ is a $(\Pcal^\gamma,\alpha)$-sequential test.
This completes the proof.
\end{proof}

\bigskip

\begin{proof}[Proof of Proposition~\ref{prop:construct-e-vals}]
Define $\Eval_t := f(\Pval_t)$; we must show that $(\Eval_t)$ is a $\Qcal$-e-process.
If $(\Pval_t)$ is a $\Qcal$-p-process then for any stopping time $\tau$ and $\Qtt \in \Qcal$, the distribution of $\Pval_\tau$ is stochastically larger than a uniform random variable (denoted $V$). Thus for any calibrator $f$, we have $\EE_\Qtt[\Eval_\tau] = \EE_{\Qtt}[f(\Pval_\tau)] \leq \EE[f(V)] = \int_0^1 f(v) \dd v =1$. Since this result holds for any $\tau$ and $\Qtt \in \Qcal$, the result follows.
\end{proof}

\bigskip

\begin{proof}[Proof of Proposition~\ref{P:convex_hull}]
To see \ref{P:convex_hull:1}, fix a probability measure $\Qtt \in \rm{conv}(\Qcal)$. Then there exist $\Qtt_1, \Qtt_2 \in \Qcal$ and $\lambda \in [0,1]$ such that $\Qtt = \lambda \Qtt_1 + (1-\lambda) \Qtt_2$. 
Let now $(\Eval_t)$ denote an e-process for $\Qcal$. Consider some stopping time $\tau$ and note that
\[
    \EE_{\Qtt}[\Eval_\tau] = \lambda\EE_{\Qtt_1}[\Eval_\tau]
        + (1-\lambda) \EE_{\Qtt_2}[\Eval_\tau] \leq \lambda + 1-\lambda = 1,
\]
since $(\Eval_t)$ is a $\Qcal$-e-process. This yields that $(\Eval_t)$ is also a $\rm{conv}(\Qcal)$-e-process. The same argument also applies for valid p-processes and sequential tests. 

Next, \ref{P:convex_hull:2}  follows in a similar way. Assume  that $(\Eval_t)$ is $\Qcal$-admissible and consider some $\rm{conv}(\Qcal)$-e-process $(\Eval_t')$ that satisfies $\Qtt(\Eval_t' \geq \Eval_t) = 1$ for all $t \in \NN$ and $\Qtt \in \rm{conv}(\Qcal)$ and there exists some $\Qtt^* \in \rm{conv}(\Qcal)$ and some $t \in \NN$ such that $\Qtt^*(\Eval_t' > \Eval_t) > 0$. Since we always have $\Qtt^* = \lambda \Qtt_1 + (1-\lambda) \Qtt_2$ for some  $\Qtt_1, \Qtt_2 \in \Qcal$ and $\lambda \in [0,1]$ we also have $\Qtt_1(\Eval_t' > \Eval_t) > 0$ or $\Qtt_2(\Eval_t' > \Eval_t) > 0$, leading to a contradiction. Again, the same argument also applies for admissibile p-processes and sequential tests. 
\end{proof}

 \bigskip
 
 \begin{proof}[Proof of Lemma~\ref{lem:anticoncentration-pointwise}]
 Fix some $\alpha \in (0,1]$, let $\tau$ denote the first time $t$ that $M_t \geq 1/\alpha$, and let  
    \[
        q := \Qtt(\tau < \infty) = \Qtt\left(\sup_{t\in \NN_0} M_t \geq \frac{1}{\alpha}\right).
    \]
    Next, \eqref{eq:emp-var} yields $\Qtt(M_\infty = 0) = 1$, for example, by \cite[Theorem~4.2]{Larsson:Ruf:convergence}.
    Note that the stopped process $M^\tau$ is a  uniformly integrable martingale, yielding $\EE_{\Qtt}[M^\tau_\infty] = 1$. On the event $\{\tau = \infty\}$, we have $M^\tau_\infty = 0$.  With $M_{-1} := 1$, $Y_0 := 0$, and $\mathcal F_{-1} := \{\emptyset, \Omega\}$, these observations then yield 
    \begin{align*}
        1 &= \EE_{\Qtt}\left[M^\tau_\infty\right] 
            = \sum_{t \in \NN_0} \EE_{\Qtt}\left[M_t \1_{\tau = t}\right] 
            = \sum_{t \in \NN_0} \EE_{\Qtt}\left[M_{t-1} Y_t \1_{\tau = t}\right]
            = \sum_{t \in \NN_0} \EE_{\Qtt}\left[\EE_{\Qtt}\left[ Y_t \left| \mathcal{F}_{t-1}, Y_t \geq \frac{1}{\alpha M_{t-1}} \right.\right]
            M_{t-1} \1_{\tau = t}\right]
            \\
            &\leq \frac{1}{\alpha} (1+\varepsilon) \sum_{t \in \NN_0} \EE_{\Qtt}\left[\1_{\tau = t}\right]  = q \frac{1+\varepsilon}{\alpha}.
    \end{align*}
    This then gives $q \geq \alpha/(1+\varepsilon)$, yielding
     the claim.
\end{proof}

\bigskip

\begin{proof}[Proof of Proposition~\ref{prop:symmetry}]
    First, assume $(M_t)$ is an $\Scal$-NM and fix a time $t$. Since $(Y_t)$ is adapted, $Y_t$ is a function of $U, X_1, \ldots, X_t$. Hence we may write $Y_t = f_t(X_t)$ for some nonnegative predictable function $f_t(\cdot)$. More explicitly, $Y_t=f_t(U, X_1,\ldots,X_{t-1};X_t)$. Now pick any real numbers $x_1,\ldots,x_t$. Consider the two-point measures $\mu_s := (\delta_{-x_s} + \delta_{x_s})/2$ for all $s\le t$ and let $\Ptt := \Utt \times \prod_{s\in\NN}\mu_{s \wedge t}$ be the distribution that makes the data independent with $X_s \sim \mu_{s \wedge t}$. Then $\Ptt \in \Scal$. Moreover,
    \[
    1 = \EE_{\Ptt_x}[Y_t|\mathcal F_{t-1}] = \frac{1}{2} \left( f_t(U, X_1,\ldots,X_{t-1}; x_t) + f_t(U, X_1,\ldots,X_{t-1}; - x_t) \right).
    \]
    Since the event $\{X_i = x_i,\ i=1,\ldots,t-1\}$ has positive probability, we get
    \[
    \frac{1}{2} \left( f_t(U, x_1,\ldots,x_{t-1}; x_t) + f_t(U, x_1,\ldots,x_{t-1}; - x_t) \right) = 1.
    \]
    But the numbers $x_1,\ldots,x_t$ were arbitrary, so it follows that the function $x\mapsto f_t(U, x_1,\ldots,x_{t-1}; x) - 1$ is odd for all $x_1,\ldots,x_{t-1}$.

    For the reverse direction fix some $\Ptt \in \Scal$ and some $t \in \NN$. Then 
    \begin{align*}
        \EE_\Ptt[Y_t|\mathcal F_{t-1}] &= 
         \EE_\Ptt[f_t(X_t)|\mathcal F_{t-1}] \\
            &\stackrel{(i)}{=} \frac{1}{2}
            \left(\EE_\Ptt[f_t(X_t)|\mathcal F_{t-1}] + \EE_\Ptt[f_t(-X_t)|\mathcal F_{t-1}] \right) \\
            &= 1+ \frac{1}{2}
            \left(\EE_\Ptt[f_t(X_t)-1|\mathcal F_{t-1}] + \EE_\Ptt[f_t(-X_t)-1|\mathcal F_{t-1}] \right)\\
            &\stackrel{(ii)}{=} 1,
    \end{align*}
    yielding the statement. Above, equality $(i)$ follows by symmetry of $\Ptt$, and equality $(ii)$ follows because $f_t-1$ is odd. 
\end{proof}

\bigskip

\begin{proof}[Proof of Proposition~\ref{prop:admissible-pvalue}]
    This follows from an application of Corollary~\ref{C:200724}. Fix some $\varepsilon > 0$. Since $f$ is continuous at zero and $f(0) = 1$ there exists some $\eta > 0$ such that $f(x) \leq 1+\varepsilon$ for all $x \in (-2\eta, 2 \eta)$. This implies \eqref{eq:200802}. Moreover, since $f$ is strictly monotone at zero we may assume that $f(\eta) \neq 1$. 
    
    Consider now the measure $\mu_\eta := (\delta_{-\eta} + \delta_\eta)/2$ and note that $\Qtt_\eta := \Utt \times  \mu_\eta^\infty \in \Scal$. Hence $\Qtt_\eta(f(X_t) \leq 1 + \varepsilon) = 1$ for all $t \in \mathbb{N}$. Moreover,
    \[
        \Qtt_\eta\left(\sum_{t \in \NN} (f(X_t) - 1)^2 = \infty \right) = 
            \Qtt_\eta\left(\sum_{t \in \NN} (f(\eta) - 1)^2 = \infty \right) = 1,
    \]
    since $f-1$ is odd and $f(\eta) \neq 1$. This shows that \eqref{eq:emp-var} holds.
    Hence Corollary~\ref{C:200724} can indeed be applied and the statement follows.
\end{proof}

\section{Auxiliary examples}
\label{sec:more-examples}

The following example shows that Proposition~\ref{P:convex_hull} cannot be extended to confidence sequences.
\begin{example}[Confidence sequences do not mesh with convex closures]\label{eg:conv-closure-CS}
    Let $\mu_+$ ($\mu_-$) denote the law of a Gaussian random variable with unit variance and mean $1$ ($-1$). Moreover, let $\Pcal = \{\mu^\infty_+, \mu^\infty_-\}$ be the family of i.i.d.~laws of such distributions. 
    Consider $\phi^{\rm mean}$, which satisfies $\phi^{\rm mean}(\mu^\infty_-) = -1$ and $\phi^{\rm mean}(\mu^\infty_+) = 1$.
    Then $(\CI_t)$ given by $\CI_t=\{-1,+1\}$
    is a (trivial) $(\phi^{\rm mean}, \Pcal, \alpha)$-CS for $\alpha \in [0,1]$. 
    Now consider the measure $\Ptt = (\mu^\infty_+ + \mu^\infty_-)/2 \in \rm{conv}(\Pcal)$, which satisfies $\phi^{\rm mean}(\Ptt) = 0 \notin \CI_t$. 
    It is clear that $(\CI_t)$ is not a $(\phi^{\rm mean}, \rm{conv}(\Pcal), \alpha)$-CS for any $\alpha\in[0,1]$. 
\end{example}

\bigskip

The next example also elaborates further on the discussion in Subsection~\ref{sec:sufficient-p} by providing an admissible p-process that has an atomic limiting distribution.

\begin{example}[Atomic admissible p-processes exist even in the presence of an independent $\Fcal_0$-measurable random device]\label{eg:atomic-pvalue}
    Consider $\Qtt$ under which $(X_t)$ are i.i.d.~uniformly distributed. Then an adapted process $(\Pval_t)$ with the following properties can be constructed.
    \begin{itemize}
        \item $\Pval_t$ is supported on $\{1/2 + k/2^{t+1}\}_{k = 1, \ldots 2^t}$;
        \item $\Qtt(\Pval_t = 1/2 + 1/2^{t+1}) = 1/2 +  1/2^{t+1}$ and $\Qtt(\Pval_t = 1/2 + k/2^{t+1}) = 1/2^{t+1}$ for all $k = 2, \ldots 2^t$;
        \item $(\Pval_t)$ is a $\Qtt$-MM with 
        \[
            \Qtt\left(\left.\Pval_{t+1} - \frac{1}{2} \in \left\{\frac{2k-1}{2^{t+2}}, \frac{k}{2^{t+1}}\right\}\right| \Pval_t - \frac{1}{2} = \frac{k}{2^{t+1}} \right) = 1
        \]
        for all $k = 1, \ldots 2^{t}$ and $t \in \NN$;
        \item $(\Pval_t)$ is independent of $U$.
    \end{itemize}
    Note that $\Pval_\infty := \inf_{t \in \NN} \Pval_t$ satisfies $\Qtt(\Pval_\infty \leq \alpha) = \alpha \1_{\alpha \geq 1/2} 
    \leq \alpha$ for all $\alpha \in [0,1]$; in particular $(\Pval_t)$ is a p-process and its limit $\Pval_\infty$ has an atom at $1/2$.

    We claim that $(\Pval_t)$ is $\Qtt$-admissible. Indeed, assume there exists a p-process $(\Pval_t')$ that dominates $(\Pval_t)$ (we explicitly allow $(\Pval_t')$ to depend on the randomization device $U$). Then there exists some $t \in \NN$ such that $\Qtt(\Pval_t' < \Pval_t) > 0$. Let us first assume that $\Qtt(\Pval_t' < 1/2 + 1/2^{t+1}) > 0$. In combination with the fact that $(\Pval_t)$ is a $\Qtt$-MM there exists some $n > t + 2$ such that 
    \[
        \Qtt\left(
        \left\{\Pval_t' \leq \frac{1}{2} + \frac{1}{2^{t+1}}\right\}
        \cap \left\{\Pval_\infty \geq \frac{1}{2} + \frac{1}{2^{t+2}}\right\}
        \right) > 0. 
    \]
    Since $\Pval_\infty' \leq \Pval_\infty$ and since
    $\Qtt(\Pval_\infty \geq 1/2 + {1}/{2^{t+2}}) = 1/2 - {1}/{2^{t+2}}$, we hence obtain $\Qtt(\Pval_\infty' \geq 1/2 + {1}/{2^{t+2}}) < 1/2 - {1}/{2^{t+2}}$, a contradiction to the fact that  $(\Pval_t')$ is a p-process.  We obtain similar contradictions if we assume $\Qtt( \{\Pval_t' < \Pval_t\} \cap \{\Pval_t = 1/2 + k/2^{t+1}\}) > 0$ for some $k = 2, \ldots, 2^t$. This shows that $(\Pval_t)$ is indeed $\Qtt$-admissible, despite having an atom and being independent of the randomization device.
\end{example}

\bigskip

The next example illustrates how anti-concentration bounds can be satisfied by NMs that lead to inadmissible p-processes.
\begin{example}[A p-process for $\Scal$ that satisfies  Proposition~\ref{prop:admissible-pvalue} need not be admissible]\label{ex:sym_counter}
    Fix the function $f: x \mapsto ((1+x) \wedge 2)^+$. This function satisfies the criteria of Proposition~\ref{prop:admissible-pvalue}. Hence the process $M_t := \prod_{s \leq t} f(X_s)$ is an $\Scal$-martingale
    and we also have \eqref{eq:200803}.
    Consider the p-process $(\Pval_t)$ given by $\Pval_t :=  \inf_{s\leq t} 1/M_s$. This is a p-process by \eqref{eq:200803}.
    
    Define next
    $\Pval_t' := \Pval_t$ for $t = 0,1$ and 
    $\Pval_t' := \Pval_t - 1/4\, \1_{X_1 \vee X_2 \leq -1}$
    for $t \geq 2$.
    Clearly we have $\Qtt(\Pval_t' \leq \Pval_t)=1$ for all $t \in \NN_0$ and $\Qtt \in \Scal$ and there exists  $\Qtt^* \in  \Scal$ such that $\Qtt^*(\Pval_2' < \Pval_2)>0$.  We now claim that $(\Pval'_t)$ is also an $\Scal$-p-process. We will prove this claim below.  This  assertion then yields that \eqref{eq:200803} is \emph{not} sufficient for the admissibility of the corresponding p-process in general.
    
    We now prove the claim that $(\Pval'_t)$ is an $\Scal$-p-process.
    To do so we consider a subset  $\widetilde \Scal \subset \Scal$,
    namely those measures $\Qtt \in \Scal$ that satisfy $\Qtt(X_1 \leq -1) \in \{0, 1/2\}$.  Note that $\Scal = \text{conv}(\widetilde \Scal)$, the convex hull of $\widetilde \Scal$. Thanks to Proposition~\ref{P:convex_hull}\ref{P:convex_hull:1} it suffices to argue that $(\Pval'_t)$ is an $\widetilde \Scal$-p-process. 
    To this end, note that on the event $\{X_1 > -1\}$ we have $\Pval_t = \Pval'_t$ for all $t \in \NN_0$. On the other hand, on the event $\{X_1 \leq -1\}$ we have $\Pval_t = 1$ for all $t \in \NN_0$. 
 
    Fix now some $\alpha \in (0,1)$ and $\Qtt \in  \widetilde\Scal$. Without loss of generality we can assume that $\Qtt(X_1 \leq -1) = 1/2$, otherwise there is nothing to be argued.
    We now need to show that 
    \begin{align} \label{eq:200803.2}
       \Qtt(\Pval'_\infty \leq \alpha) \leq \alpha. 
    \end{align}
    To make headway, note that  
    \begin{align*}
        \{\Pval_\infty' \leq \alpha\}
            &= \left( \{\Pval_\infty' \leq \alpha\} 
                \cap \{X_1 > -1\}\right)
            \cup 
            \left( \{\Pval_\infty' \leq \alpha\} 
                \cap \{X_1 \leq -1\}\right)\\
            &\subset 
            \{\Pval_\infty \leq \alpha\} \cup
            \left\{\Pval_\infty - \frac{1}{4} \1_{X_1 \vee X_2 \leq -1} \leq \alpha \right\}\\
            &= 
            \{\Pval_\infty \leq \alpha\} \cup
            \left\{\frac{1}{4} \1_{X_1 \vee X_2 \leq -1} \geq 1 - \alpha \right\}.
    \end{align*}
    Thus, if $\alpha < 3/4$ then $ \{\Pval_\infty' \leq \alpha\} \subset \{\Pval_\infty \leq \alpha\}$
    and we have \eqref{eq:200803.2}. Let us now assume that $\alpha \geq 3/4$. Note that
    since $\Qtt(\Pval_\infty = 1) \geq \Qtt(X_1 \leq -1) = 1/2$ and hence $\Qtt(\Pval_\infty \leq \alpha) < 1/2$. This then yields
    \[
        \Qtt(\Pval_\infty' \leq \alpha) \leq
        \Qtt(\Pval_\infty \leq \alpha) + \Qtt(X_1 \vee X_2 \leq -1)
        < 1/2 + 1/2^2 = 3/4 \leq \alpha,
    \]
    yielding that $(\Pval_t')$ is an $\widetilde S$-p-process, hence also an $S$-p-process.
\end{example}


\end{document}